\newcommand{\bracks}[1]{\left[#1\right]}
\newcommand{\paren}[1]{\left(#1\right)}
\newcommand{\set}[1]{\left\{#1\right\}}
\newcommand{\half}{\frac{1}{2}}
\renewcommand{\bar}{\overline}
\renewcommand{\subset}{\subseteq}
\newcommand{\tensor}{\otimes}
\DeclareMathOperator{\id}{id}
\DeclareMathOperator{\Rep}{Rep}
\DeclareMathOperator{\Hom}{Hom}
\DeclareMathOperator{\End}{End}
\DeclareMathOperator{\co}{co}
\newcommand{\gr}{\mathrm{gr}}
\newcommand{\GA}[1]{{\kk[#1]}}
\newcommand{\kK}{\GA{K}}
\newcommand{\kkpsiK}{{\kK_\psi}}
\newcommand{\hact}{\curvearrowright}
\newcommand{\yd}[1]{{}_{#1}^{#1}\mathcal{YD}}
\newcommand{\hkp}{{\mathcal{KP}}}
\numberwithin{equation}{section}
\newtheorem{theorem}{Theorem}[section]
\newtheorem{lemma}[theorem]{Lemma}
\newtheorem{proposition}[theorem]{Proposition}
\newtheorem{corollary}[theorem]{Corollary}
\newtheorem{conjecture}[theorem]{Conjecture}
\theoremstyle{definition}
\newtheorem{remark}[theorem]{Remark}
\newtheorem{definition}[theorem]{Definition}
\newtheorem{question}[theorem]{Question}
\newcommand{\kk}{\Bbbk}
\newcommand{\M}{\mathcal{M}}
\newcommand{\F}{\mathcal{F}}
\newcommand{\G}{\mathcal{G}}
\newcommand{\N}{\mathcal{N}}
\newcommand{\C}{\mathcal{C}}
\newcommand{\B}{\mathfrak{B}}
\newcommand{\Z}{\mathbb{Z}}
\renewcommand{\epsilon}{\varepsilon}
\renewcommand{\phi}{\varphi}
\title{$H$-Equivariant Morita Equivalences of Loewy-Graded Comodule Algebras}
\author{Jacob Van Grinsven}
\date{October 10, 2025}
\subjclass[2020]{16T05}
\begin{document}

\begin{abstract}
Let $H$ be a coradically graded Hopf algebra. For every Loewy-graded exact $H$-comodule algebra $A=\oplus_{n\geq 0} A(n)$ and $H_0$-equivariant Morita equivalence $A(0)\simeq_{H_0} X$, there exists a Loewy-graded $H$-comodule algebra $B$ (isomorphic to $X$ in degree zero) realizing an $H$-equivariant Morita equivalence $A\simeq_H B$. In addition, if every exact $H_0$-comodule algebra is $H_0$-equivariant Morita equivalent to a coideal subalgebra of $H_0$, then every Loewy-graded exact $H$-comodule algebra is $H$-equivariant Morita equivalent to a coideal subalgebra of $H$. We also discuss Loewy-graded $H$-comodule algebras with $H_0=\hkp$, the Kac-Paljutkin Hopf algebra.
\end{abstract}

\maketitle

\setcounter{tocdepth}{1}
\tableofcontents


\section{Introduction}
A finite-dimensional Hopf algebra $H$ may be viewed as a generalization of a finite group, in the sense that the category of finite-dimensional (left) $H$-modules, ${}_H\M$, forms a tensor category, as in \cite{EGNO}. In this manner, the category ${}_H\M$ admits additive and multiplicative structures analogous to those of a ring. This motivates the study of the structure of ${}_H\M$ though the study of ${}_H\M$-module categories, a categorical analogue of a module over a ring.


Exact ${}_H\M$-module categories have been shown to be parameterized by exact algebras in ${}_H\M$ (see \cite{EGNO}, Corollary 7.10.5). The study of $H$-module algebras and Hopf actions on finite-dimensional algebras is an active area of research. A (very) small selection of works regarding $H$-module algebras (and their associated module categories) includes \cite{CEW_Weyl}, \cite{CKWZ16}, \cite{CE17}, \cite{KO21}, and \cite{EKW}.


It was shown in \cite{AM7} that, in characteristic zero, ${}_H\M$-module categories are parameterized by equivalence classes of $H$-comodule algebras. In particular, each ${}_H\M$-module category is equivalent to the category of finite-dimensional $A$-modules for some $H$-comodule algebra $A$ with trivial coinvariants and simple in ${}^H\M_A$. We call such comodule algebras \emph{AM-exact}. This is in contrast with the description of ${}_H\M$-module categories associated to $H$-module algebras, in which the module category is defined as the category of $A$-modules ``inside" of ${}_H\M$ (see Definition 7.8.5 in \cite{EGNO}). Another benefit of this perspective is that, for any inclusion of Hopf algebras $H\subset H'$, every $H$-comodule algebra is naturally an $H'$-comodule algebra. That is, if $A$ is a left $H$-comodule algebra, then ${}_A\M$ naturally inherits the structure of both an ${}_H\M$-module category and an ${}_{H'}\M$-module category.

The correspondence between ${}_H\M$-module categories and $H$-comodule algebras is not unique, in the sense that multiple $H$-comodule algebras may generate the same ${}_H\M$-module category. This draws direct connections to classical Morita theory. If $A$ and $B$ generate equivalent ${}_H\M$-module categories, then we say $A$ and $B$ are \emph{$H$-equivariant Morita equivalent}. A set $X$ of $H$-comodule algebras is a \emph{complete set of exact $H$-equivariant Morita equivalence class representatives} if every exact indecomposable ${}_H\M$-module category is equivalent to the category of $A$-modules for some $A\in X$.


A method inspired by the lifting method (see \cite{LiftQuantLin}, \cite{ANP}) was developed in \cite{MPT} to classify ${}_H\M$-module categories via $H$-equivariant equivalence classes of comodule algebras. Explicit computations of $H$-equivariant Morita equivalence classes of AM-exact comodule algebras for several pointed Hopf algebras with cyclic coradical (e.g. Taft algebras, $u_q(\mathfrak{sl}_2)$) were also given in \cite{MPT}. Motivated by this work, we make the following conjecture:
\begin{conjecture}\label{Conj 1}
    Let $H$ be a finite-dimensional Hopf algebra with the dual Chevalley property, $H_0$ its coradical, and $X_0$ a complete set of exact $H_0$-equivariant Morita equivalence class representatives. Then the set
    \begin{equation*}
        X:=\set{A: \lambda^{-1}(H_0\tensor A)\text{ is isomorphic to an element in }X_0}
    \end{equation*}
    is a complete set of exact $H$-equivariant Morita equivalence class representatives.
\end{conjecture}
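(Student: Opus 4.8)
The plan is to reduce Conjecture~\ref{Conj 1} to the main theorem of this paper in two stages: first passing from $H$ to its associated graded Hopf algebra, and then, inside the coradically graded world, reducing from an arbitrary exact comodule algebra to a Loewy-graded one and invoking the theorem.

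\textbf{Stage 1: reduction to the coradically graded case.} Since $H$ has the dual Chevalley property, $H_0$ is a Hopf subalgebra, the coradical filtration of $H$ is a Hopf algebra filtration, and $\gr H$ is coradically graded with $(\gr H)_0=H_0$. For an exact $H$-comodule algebra $A$, the coradical filtration of $A$ as an $H$-comodule (its Loewy filtration) is an algebra filtration whose degree-zero piece is exactly $\lambda^{-1}(H_0\tensor A)=A(0)$, and $\gr A$ is a Loewy-graded $\gr H$-comodule algebra with $(\gr A)(0)=A(0)$. I would then establish: (i) $A$ is exact precisely when $\gr A$ is exact; (ii) every Loewy-graded exact $\gr H$-comodule algebra arises as $\gr A$ for some exact $H$-comodule algebra $A$; and (iii) $A\simeq_H A'$ precisely when $\gr A\simeq_{\gr H}\gr A'$. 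Given (i)--(iii), the set $X$ attached to $H$ is a complete set of exact Morita representatives exactly when the analogous set attached to $\gr H$ is, so it suffices to prove the conjecture when $H$ is coradically graded.

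\textbf{Stage 2: the coradically graded case.} Now $H=\gr H$ and $H_0$ is its degree-zero Hopf subalgebra. First, every $A\in X$ is exact: $A(0)$ is exact since it is isomorphic to a member of $X_0$, and I would deduce exactness of $A$ from that of $A(0)$ by filtering ${}_A\M$ along the Loewy filtration, whose associated graded is governed by ${}_{A(0)}\M$. Second, for exhaustiveness, realize an arbitrary exact indecomposable ${}_H\M$-module category as ${}_A\M$ for some exact $H$-comodule algebra $A$, and show that $A$ is $H$-equivariant Morita equivalent to a \emph{Loewy-graded} exact comodule algebra $A'$ — transporting the Loewy filtration across a Morita context and checking that it descends to a grading on a suitable representative. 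Then $A'(0)$ is an exact $H_0$-comodule algebra, so $A'(0)\simeq_{H_0}Y$ for some $Y\in X_0$, and the main theorem produces a Loewy-graded $H$-comodule algebra $B$ with $B(0)\cong Y$ and $A'\simeq_H B$. Since $B(0)\cong Y\in X_0$ we obtain $B\in X$ with ${}_B\M\simeq{}_A\M$, which is the given module category; this completes the argument.

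\textbf{The main obstacle.} I expect the crux to lie in Stage~1, and specifically in the lifting assertions (ii) and (iii): that every Loewy-graded exact $\gr H$-comodule algebra deforms to an exact $H$-comodule algebra, and that $H$-equivariant Morita equivalence is detected on associated gradeds. These are genuine deformation problems whose obstructions live in Hochschild-type cohomology of $A(0)$ twisted by the comodule structure, and they are exactly what the lifting method of \cite{LiftQuantLin,ANP,MPT} is designed to attack but does not resolve in full generality. A secondary difficulty, internal to Stage~2, is showing that an arbitrary exact comodule algebra over a coradically graded Hopf algebra is Morita equivalent to a Loewy-graded one; this ought to reduce to a compatibility check between the Loewy filtration and the bimodule implementing a Morita equivalence, but it is the technical heart of passing from the theorem as stated to the general conjecture.
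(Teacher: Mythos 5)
The statement you are trying to prove is stated in the paper as a \emph{conjecture}: the paper does not prove it, and only establishes the special case in which $H$ is coradically graded and the comodule algebra is Loewy-graded (Theorem \ref{Main Thm s1}, proved as Theorem \ref{Main Thm s1 Sec 4}). Your proposal correctly identifies that the full conjecture would follow from that theorem together with two reductions, and you honestly flag that you have not carried these out; but as written this is a plan with the genuinely hard steps left open, not a proof. Concretely: in Stage~1, assertion (iii) --- that $A\simeq_H A'$ \emph{precisely when} $\gr A\simeq_{\gr H}\gr A'$ --- is false in the direction you would actually need. Distinct lifts of the same Loewy-graded comodule algebra are in general not $H$-equivariant Morita equivalent; for the Taft algebra the lifts of $\kk[x]/(x^n)$ with $x^n=\mu$ have identical associated graded objects, yet $\mu=0$ gives a local algebra and $\mu\neq 0$ a semisimple one, so they are not even Morita equivalent as $\kk$-algebras. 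This non-uniqueness of lifts is the entire point of the lifting method, so no argument of the form ``Morita equivalence is detected on associated gradeds'' can work; what the conjecture requires instead is that \emph{some} lift of a suitable graded representative lands in the right Morita class, which is an unresolved deformation/lifting problem. Assertion (ii) (surjectivity of $\gr$ onto Loewy-graded exact comodule algebras) is likewise unproven, and even the forward direction of (iii) would require filtering an equivariant Morita bimodule compatibly with the Loewy filtrations, which you do not address.

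The secondary gap you name in Stage~2 --- that an arbitrary exact comodule algebra over a coradically graded $H$ is $H$-equivariant Morita equivalent to a Loewy-graded one --- is also not established anywhere in the paper; Theorem \ref{Main Thm s1 Sec 4} takes Loewy-gradedness of $A$ as a hypothesis. (Your exactness claim for members of $X$ is, by contrast, essentially Theorem \ref{Am Lift Thm} and is fine.) So the proposal reduces an open conjecture to two other open problems, one of which is formulated in a way that is provably too strong. You should present this as a reduction strategy and a discussion of obstructions, not as a proof.
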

A proof of Conjecture \ref{Conj 1} would imply that it is sufficient to consider lifts (in the sense of \cite{MPT}) of $H_0$-equivariant Morita equivalence class representatives. We prove Conjecture \ref{Conj 1} along the first step in the comodule algebra lifting method. In particular, we prove the following theorem.
\begin{theorem}\label{Main Thm s1}
    Let $H$ be a coradically graded Hopf algebra. If $X_0$ is a complete set of AM-exact $H_0$-equivariant Morita equivalence class representatives, then every Loewy-graded, AM-exact $H$-comodule algebra $A$ is $H$-equivariant Morita equivalent to an element of
    \begin{equation*}
        X:=\set{A: \text{Loewy-graded with } A(0)\text{ isomorphic to an element in }X_0}.
    \end{equation*}
\end{theorem}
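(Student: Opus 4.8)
Write $H=\bigoplus_{n\ge 0}H(n)$ for the coradical grading (so $H(0)=H_0$) and $A=\bigoplus_{n\ge 0}A(n)$ for the Loewy grading, so that the coaction $\rho_A\colon A\to H\otimes A$ is graded, i.e. $\rho_A(A(n))\subseteq\bigoplus_{i+j=n}H(i)\otimes A(j)$; in particular $\rho_A(A(0))\subseteq H_0\otimes A(0)$, so $A(0)$ is an $H_0$-comodule algebra, and since $A(\ge 1):=\bigoplus_{n\ge 1}A(n)$ is an $H$-costable two-sided ideal we have $A(0)\cong A/A(\ge 1)$. The first step is to check that $A(0)$ is AM-exact as an $H_0$-comodule algebra: trivial coinvariants follow from $A(0)^{\co H_0}\subseteq A^{\co H}=\kk$, and $H_0$-simplicity of $A(0)$ follows from $H$-simplicity of $A$ together with the grading, since an $H_0$-costable ideal $I\subsetneq A(0)$ yields the proper $H$-costable ideal $I\oplus A(\ge 1)$ of $A$ (this is the degree-zero instance of the Loewy-graded exactness correspondence). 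Granting this, the hypothesis on $X_0$ gives $C_0\in X_0$ and an $H_0$-equivariant Morita equivalence $A(0)\simeq_{H_0}C_0$, realized by an invertible $(A(0),C_0)$-bimodule $N$ (a progenerator for $A(0)$ with $C_0\cong\End_{A(0)}(N)$ up to opposite) carrying a compatible $H_0$-comodule structure, with inverse $(C_0,A(0))$-bimodule $N^{-1}$ and evaluation isomorphisms $N\otimes_{C_0}N^{-1}\xrightarrow{\ \sim\ }A(0)$ and $N^{-1}\otimes_{A(0)}N\xrightarrow{\ \sim\ }C_0$. Using $H_0\subseteq H$ we regard $N,N^{-1}$ as $H$-comodules concentrated in Loewy degree $0$.

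\textbf{Construction of $B$.} Set $B:=N^{-1}\otimes_{A(0)}A\otimes_{A(0)}N=\bigoplus_{n\ge 0}B(n)$ with $B(n):=N^{-1}\otimes_{A(0)}A(n)\otimes_{A(0)}N$, equipped with the algebra structure transported from $A$ through the evaluation $N\otimes_{C_0}N^{-1}\to A(0)$ and with the codiagonal $H$-coaction built from $\rho_A$ and the coactions of $N,N^{-1}$. One checks that the grading is multiplicative ($B(m)B(n)\subseteq B(m+n)$ because $A(m)A(n)\subseteq A(m+n)$ and the middle copies of $A(0)$ are absorbed), that $\rho_B$ is an algebra map, and that $\rho_B$ is graded: since $N,N^{-1}$ sit in degree $0$ and $H(a)H(b)\subseteq H(a+b)$, the inclusion $\rho_A(A(n))\subseteq\bigoplus_{i+j=n}H(i)\otimes A(j)$ transports to $\rho_B(B(n))\subseteq\bigoplus_{i+j=n}H(i)\otimes B(j)$. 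Finally $B(0)=N^{-1}\otimes_{A(0)}A(0)\otimes_{A(0)}N\cong N^{-1}\otimes_{A(0)}N\cong C_0$ as $H_0$-comodule algebras; combined with the fact that Morita equivalence identifies $\operatorname{rad}^n B$ with $\bigoplus_{m\ge n}B(m)$ (so the given grading is the Loewy grading), this shows $B$ is a Loewy-graded $H$-comodule algebra with $B(0)$ isomorphic to an element of $X_0$, i.e. $B\in X$.

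\textbf{The equivariant Morita equivalence.} Let $P:=A\otimes_{A(0)}N$, an $(A,B)$-bimodule with left $A$-action on the first factor, right $B$-action via $N\otimes_{C_0}N^{-1}\to A(0)$, and codiagonal $H$-coaction from $\rho_A$ and the coaction of $N$. Since $N$ is a progenerator for $A(0)$, $P$ is a progenerator for $A$, and $\End_A(P)^{\mathrm{op}}\cong N^{-1}\otimes_{A(0)}A\otimes_{A(0)}N=B$ (using $\End_A(A\otimes_{A(0)}N)\cong\Hom_{A(0)}(N,A\otimes_{A(0)}N)\cong N^{-1}\otimes_{A(0)}A\otimes_{A(0)}N$); its inverse is $P^{-1}:=N^{-1}\otimes_{A(0)}A$, with $P^{-1}\otimes_AP\cong B$ immediately and $P\otimes_BP^{-1}\cong A$. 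Hence $F:=P^{-1}\otimes_A-\colon{}_A\M\to{}_B\M$ is a $\kk$-linear equivalence, and the $H$-comodule structure on $P$ supplies natural isomorphisms $F(V\otimes M)\cong V\otimes F(M)$ for $V\in{}_H\M$, $M\in{}_A\M$, making $F$ an equivalence of ${}_H\M$-module categories. Therefore $A\simeq_H B$ with $B\in X$, which is the assertion of the theorem.

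\textbf{Main obstacle.} The technical heart lies in the last two paragraphs: checking that the codiagonal $H$-coaction is well defined on the balanced tensor products over the \emph{subalgebra} $A(0)$, is coassociative and an algebra map on $B$, and — the most delicate point — intertwines the right $B$-action on $P$ and the ${}_H\M$-actions so that $F$ genuinely lifts to an ${}_H\M$-module functor (equivalently, that the Morita context $(P,P^{-1})$ is one of $H$-comodules). The reduction step, namely confirming AM-exactness of $A(0)$ so the hypothesis on $X_0$ applies, also requires the grading argument above; the remaining bookkeeping (multiplicativity and Loewy-ness of the grading on $B$, invertibility of $P$) is routine given finite-dimensionality and the progenerator properties of $N$.
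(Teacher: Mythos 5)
Your route is genuinely different from the paper's: you transport the structure of $A$ directly along the degree-zero Morita bimodule $N$, setting $B=N^{-1}\tensor_{A(0)}A\tensor_{A(0)}N$ and $P=A\tensor_{A(0)}N$, whereas the paper first embeds $A$ into the smash product $\B(V)\# A(0)$ (Proposition \ref{Sub Crossed Prop}), builds an explicit Loewy-graded module $P_0\tensor\B(V)$ over that smash product (Proposition \ref{Extend P0 Prop}), restricts to the $A$-submodule generated in degree zero, and then invokes a separately proven structural result (Theorem \ref{Loew Grade S Thm}) to conclude that $\End_A(P_A)$ is a Loewy-graded comodule algebra with the right degree-zero part. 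Your approach, if completed, would be more intrinsic and would bypass the embedding into $\B(V)\#A(0)$ entirely; but as written it defers exactly the verifications that constitute the technical content of the paper's Sections 3--4, namely that the coaction you write down on $B$ and on the Morita context $(P,P^{-1})$ is well defined, is an algebra map, and makes the induced equivalence an ${}_{{}_H\M}$-module functor. Note in particular that the coaction on $N^{-1}$ (a dual bimodule) is not literally ``codiagonal'': it involves the inverse antipode, as in equation \eqref{EQU End Coact}, and one must know that the evaluation and coevaluation of the Morita context are themselves comodule maps before the codiagonal coaction on a balanced tensor product over $A(0)$ descends and is multiplicative. Flagging this as ``the main obstacle'' does not discharge it; this is where the actual work lies.

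There is also a concrete error in your reduction step. You assert that $A({\ge}1)=\bigoplus_{n\ge 1}A(n)$ is an $H$-costable two-sided ideal and that an $H_0$-costable ideal $I\subsetneq A(0)$ yields the $H$-costable ideal $I\oplus A({\ge}1)$. Neither is true: the Loewy grading gives $\lambda(A(n))\subset\bigoplus_{i=0}^{n}H(i)\tensor A(n-i)$, and the summands with $i\ge 1$ land in $H(i)\tensor A(n-i)$ with $n-i<n$, in particular in $H\tensor A(0)$ when $i=n$; so $\lambda(A({\ge}1))\not\subset H\tensor A({\ge}1)$ in general. Indeed, if $A({\ge}1)$ were costable it would be a proper nonzero costable ideal, contradicting the assumed AM-exactness of $A$ whenever $A\neq A(0)$. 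The conclusion you want (that $A(0)$ is AM-exact over $H_0$) is correct, but the right argument is either to cite Theorem \ref{Am Lift Thm}, or to observe that a nonzero $H_0$-costable right ideal $I\subsetneq A(0)$ generates the right ideal $IA$ of $A$, which is $H$-costable since $\lambda(IA)\subset(H_0\tensor I)(H\tensor A)\subset H\tensor IA$ and is proper because its degree-zero component is $I$. Finally, your identification of the given grading on $B$ with the Loewy filtration via ``$\operatorname{rad}^nB$'' conflates the radical series with the filtration $B_n=\lambda^{-1}(H_n\tensor B)$; showing these coincide (condition (2) of Definition \ref{Graded Equiv Ob Def}) is precisely the two-inclusion argument carried out in the proof of Theorem \ref{Loew Grade S Thm} and does not follow from a graded coaction alone.
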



We discuss tensor categories, Hopf algebras, and other related algebraic structures in Section 2. Section 3 is dedicated to describing the endomorphism algebra of Loewy-graded $A$-modules. These results are central to Section 4 where we prove Theorem \ref{Main Thm s1}. Lastly, Section 5 is dedicated to showing that every ${}_{\hkp}\M$-module category is equivalent to ${}_A\M$ for some coideal subalgebra $A\subset \hkp$ where $\hkp$ is the Kac-Paljutkin Hopf algebra. We also discuss the Loewy-graded $H$-comodule algebras when $H$ is coradically graded and $H_0=\hkp$.

\section{Background and preliminaries}
Throughout, $\kk$ denotes an algebraically closed field of characteristic zero and $H$ a finite-dimensional Hopf algebra. We denote the comultiplication on $H$ by $\Delta$ and counit $\epsilon$. For a finite-dimensional $\kk$-algebra $A$, the category of finite-dimensional left $A$-modules will be denoted ${}_A\M$. Similarly, $\M_A$ denotes the category of finite-dimensional right $A$-modules.

\subsection{Tensor and Module Categories}

A \textit{tensor category} $\C$ is a $\kk$-linear abelian rigid monoidal category with simple monoidal unit $\mathbb{1}_\C$ such that the bifunctor $\tensor_\C:\C\times \C\to \C$ is bilinear on morphisms. Let $a$ be the associativity constraint and $r,\ell$ the right and left unit constraints respectively. One example of particular interest is, ${}_H\M$, the category of finite-dimensional left $H$-modules for a finite-dimensional Hopf algebra $H$. In particular, if $H$ is a finite-dimensional Hopf algebra, then the tensor product (over $\kk$) of representations 
$V,W\in {}_H\M$ naturally inherits the structure of an $H\tensor_\kk H$-module. The coproduct is a morphism of algebras that defines an $H$-module structure on $V\tensor_\kk W$. With this definition of the tensor product of $H$-modules, ${}_H\M$ has the structure of a tensor category. For a complete proof of this fact, along with a comprehensive treatment of tensor (and module categories), we direct the reader to \cite{EGNO}.

A left $\C$-\textit{module category} is an abelian category $\M$, equipped with an exact bifunctor $\tensor_\M:\C\times \M\to \M$ and natural isomorphisms:
\begin{align*}
    m_{X,Y,M}&:(X\tensor_\C Y)\tensor_\M M\to X\tensor_\M (Y\tensor_\M M)\\
    \ell_M&:\mathbb{1}_\C\tensor_\M M\to M
\end{align*}
such that for all $X,Y,Z\in \C, M\in \M$:
\begin{align*}
    (\id_X\tensor m_{Y,Z,M})m_{X,Y\tensor Z,M}(a_{X,Y,Z}\tensor \id_M)&=m_{X,Y,Z\tensor M}m_{X\tensor Y,Z,M},\\
    (\id_X\tensor \ell_M)m_{X,\mathbb{1}_\C,M}.
\end{align*}
Given two left $\C$-module categories $\M,\M'$, a \textit{$\C$-module functor} is a pair $(\F,c)$ such that $\F$ is a $\kk$-linear functor $\F:\M\to \N$ and $c$, a natural isomorphism \begin{align*}
    c_{X,M}:\F(X\tensor_\M M)\to X\tensor_{\N} \F(M),
\end{align*}
such that for all $X,Y\in \C, M\in \M$:
\begin{align*}
    (\id_X\tensor c_{Y,M})c_{X,Y\tensor M}\F(m_{X,Y,M})&=m_{X,Y,\F(M)}c_{X\tensor Y,M}\\
    \ell_{\F(M)}c_{\mathbb{1}_\C,M}&=\F(\ell_M).
\end{align*}
We can see that the composition of $\C$-module functors is a $\C$-module functor. In particular, $(\G,d)\circ (\F,c)=(\G\circ \F, d\circ \G(c))$. A \textit{natural transformation of $\C$-module functors} $(\F,c),(\G,d):\M\to \N$ is a natural transformation $\eta:\F\to\G$ such that
\begin{equation*}
    d_{X,M}\eta_{X\tensor M}=(\id_X\tensor \eta_M)c_{X,M}
\end{equation*}
for all $X\in\C$ and $M\in \M$. We say that $\M$ and $\N$ are \textit{equivalent} as $\C$-module categories if there exists functors $F:\M\to \N$, $\G:\N\to \M$ and natural isomorphisms of $\C$-module functors $\id_\N\to \G\circ \F$ and $\id_\N\to \F\circ \G$.

The direct sum of two $\C$-module categories $\M$ and $\N$ is defined by the coordinate wise action on the product category $\M\times \N$. We say that $\M$ is indecomposable if it is not equivalent as a $\C$-module category to the direct sum of two (non-trivial) $\C$-module categories. From a representation theoretic perspective, it is natural to study $\C$ by studying $\C$-module categories. The notion of an exact $\C$-module category was developed in \cite{FinTen} and provides a categorical analogue to a projective module.

\begin{definition}[Exact Module Categories]
    Let $\C$ be a finite tensor category. We say the $\C$-module category $\M$ is \textit{exact} if for all projective $P\in\C$, and arbitrary $M\in \M$, $P\tensor M\in\M$ is projective.
\end{definition}
Notice if $\M$ semisimple, then every object in $\M$ is projective so $\M$ is exact. Moreover, if $\C$ is fusion then every exact module category is semisimple (see Example 7.5.4 in \cite{EGNO}) but module categories can be quite complicated when $\C$ (or $H$) is not semisimple.

\subsection{$H$-Comodule Algebras}
When $\C={}_H\M$ for a finite dimensional Hopf algebra $H$, we can construct module categories from left $H$-comodule algebras. To begin, we recall several constructions.

A left $H$-comodule algebra is a $\kk$-algebra $A$ equipped with a coaction $\lambda_A:A\to H\tensor A$ such that $\lambda_A$ is a morphism of $\kk$-algebras. We use (sumless) Sweedler notation,
\begin{equation*}
    \lambda_A(a)=a_{(-1)}\tensor a_{(0)}
\end{equation*}
and use $\lambda=\lambda_A$ when no confusion can arise. We will see shortly that the category of finite-dimensional representations of $A$, denoted ${}_A\M$, admits the structure of a left ${}_H\M$-module category. The coinvariant subspace, $A^{\co H}=\set{a\in A:\lambda(a)=1\tensor a}$ is an $H$-costable subalgebra of $A$. The unit of $A$ is always contained in the coinvariant subalgebra $A^{\co H}$ and if $\dim(A^{\co H})=1$ then we say $A$ has trivial coinvariants. We say $A$ is right $H$-simple if there are no proper ideals $J\subset A$ such that $\lambda(J)\subset H\tensor J$.

We recall the construction in \cite{AM7} for the action of ${}_H\M$ on ${}_A\M$: For any $X\in {}_H\M, M\in {}_A\M$, the vector space $X\tensor_{\kk} A$ is naturally a $H\tensor_{\kk} A$-module and since $\lambda$ is a morphism of algebras, $X\tensor_\kk A$ is a left $A$-module. In particular, for all $x\in X, m\in M$ and $a\in A$:
\begin{equation*}
    a\cdot (x\tensor m)=(a_{(-1)}\cdot x)\tensor (a_{(0)}\cdot m).
\end{equation*}
The compatibility with the comultiplication of $H$ and the $H$-coaction provides the associativity isomorphism $m_{X,Y,M}$ and the counit provides the unit constraint on ${}_A\M$. The proof of the next theorem can be found in \cite{AM7}.
\begin{theorem}\label{exact Thm}
Let $H$ be a finite-dimensional Hopf algebra over $\kk$.
\begin{enumerate}
    \item If $A$ is right $H$-simple then ${}_A\M$ is exact, and
    \item any indecomposable exact ${}_H\M$-module category is equivalent to ${}_A\M$ for some right $H$-simple $H$-comodule algebra $A$ with trivial coinvariants.
\end{enumerate}
\end{theorem}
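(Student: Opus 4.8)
The plan is to handle the two assertions by rather different arguments; throughout write $\C={}_H\M$ and recall that a left $H$-comodule algebra $A$ makes ${}_A\M$ into a $\C$-module category via $X\otimes_\M M=X\otimes_\kk M$, $a\cdot(x\otimes m)=(a_{(-1)}\cdot x)\otimes(a_{(0)}\cdot m)$.

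For (1), I would first reduce exactness of ${}_A\M$ to a single projectivity statement: since $H$ is the regular --- hence projective --- object of $\C$ and every projective object of $\C$ is a direct summand of $H^{\oplus n}$, the module category ${}_A\M$ is exact if and only if $H\otimes_\kk M$ is a projective $A$-module for every $M\in{}_A\M$. The structural input is the untwisting isomorphism: the $\kk$-linear bijection $\Phi\colon H\otimes_\kk A\to H\otimes_\kk A$, $\Phi(h\otimes a)=h\,S(a_{(-1)})\otimes a_{(0)}$ (whose inverse is built from $S^{-1}$), carries the $A$-action $a\cdot(h\otimes b)=(a_{(-1)}h)\otimes(a_{(0)}b)$ --- which is exactly the action on $H\otimes_\kk M$ for $M=A$ --- to the action $a\cdot(h\otimes b)=h\otimes ab$. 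Hence ${}_A(H\otimes_\kk A)$ is free of rank $\dim H$, the functor $H\otimes_\kk-\cong(H\otimes_\kk A)\otimes_A-$ is exact (the bimodule $H\otimes_\kk A$ being free, hence flat, as a right $A$-module) and sends free modules to free modules, so for a projective resolution $P_\bullet\to M$ one gets $\mathrm{Ext}^i_A(H\otimes_\kk M,N)\cong\mathrm{Ext}^i_A(M,N')$ with $N':=\Hom_A({}_A(H\otimes_\kk A),N)$. It therefore suffices to show $N'$ is injective for every $N$; here one invokes that right $H$-simple comodule algebras are Frobenius (so that injective and projective $A$-modules coincide) together with a direct analysis of $N'$ via the explicit form of $\Phi$, which I expect to be the only real computation in (1). (Alternatively one may reduce the projectivity of $H\otimes_\kk M$ to the case of simple $M$, using exactness of $H\otimes_\kk-$ and that a module filtered by projectives is projective.)

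For (2), the plan is reconstruction. The forgetful functor $\C\to\vect$ makes $\vect$ into a $\C$-module category, and an $H$-comodule algebra $A$ is exactly the data of a $\C$-module category ${}_A\M$ equipped with a faithful exact $\C$-module functor down to this copy of $\vect$. So, given an indecomposable exact $\M$ (a finite abelian category), I would: (i) produce a faithful exact $\C$-module functor $F\colon\M\to\vect$ --- the step that genuinely uses exactness --- and, from $F$ applied to a generator of $\M$, extract an $H$-comodule algebra $A_0$ with $\M\simeq{}_{A_0}\M$ (the $H$-coaction on $A_0$ coming from the fact that $F$ is a module functor, i.e.\ from the reconstruction of $H$ out of the fibre functor of $\C$); and (ii) normalize: using indecomposability of $\M$, replace $A_0$ by an $H$-equivariant Morita equivalent comodule algebra $A$ that is right $H$-simple with trivial coinvariants, by paring $A_0$ down --- a proper nonzero $H$-costable ideal would produce a nontrivial $\C$-stable Serre subcategory, and a nontrivial idempotent in the coinvariants a nontrivial $\C$-stable decomposition, both forbidden for a minimal presentation of the indecomposable $\M$.

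The main obstacle is (2), and within it step (i): constructing a $\C$-module fibre functor on an abstract exact module category and checking that the algebra it produces is genuinely a \emph{comodule} algebra (carrying an $H$-coaction, not an $H$-action) rather than an algebra object of $\C$; the normalization in (ii), which pins the coinvariants down to $\kk$, is also delicate. By contrast (1) is comparatively routine once one grants the standard --- though nontrivial --- fact that right $H$-simple comodule algebras are Frobenius.
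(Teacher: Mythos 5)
First, a point of comparison: the paper does not prove Theorem \ref{exact Thm} at all --- it is quoted from \cite{AM7} --- so there is no in-paper argument to measure your proposal against, and it has to be judged on its own terms. On those terms, part (1) contains a genuine circularity. Your reduction of exactness to the projectivity of $H\tensor_\kk M$ for every $M\in{}_A\M$ is correct, and the untwisting isomorphism $\Phi(h\tensor a)=hS(a_{(-1)})\tensor a_{(0)}$ does show that ${}_A(H\tensor_\kk A)$ is free; but $\Phi$ is only available for $M=A$. Your Ext reformulation $\ext^i_A(H\tensor_\kk M,N)\cong\ext^i_A(M,N')$ with $N'=\Hom_A\bigl({}_A(H\tensor_\kk A),N\bigr)$ then requires showing that $N'$ is injective for every $N$ --- and by that very same isomorphism, ``$N'$ is injective for all $N$'' is \emph{equivalent} to ``$H\tensor_\kk M$ is projective for all $M$''. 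Nothing has been gained: the ``direct analysis of $N'$'' you postpone is the entire theorem, and invoking that right $H$-simple comodule algebras are Frobenius does not close the gap, since there is no reason for $N'\cong N^{\dim H}$ (with its twisted action) to be projective when $N$ is not. The mechanism that actually works is different: $H\tensor_\kk M$ carries a natural left $H$-coaction $h\tensor m\mapsto h_{(1)}\tensor(h_{(2)}\tensor m)$ making it an object of the relative Hopf module category ${}^H_A\M$, and Skryabin's theorem --- the result the paper quotes in a special case as Theorem \ref{4.2iii_SKR} --- asserts that over a right $H$-simple comodule algebra every such Hopf module is projective (free, under the additional hypotheses relevant here). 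That single observation finishes (1).

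Part (2) is pointed in the right direction --- the argument of \cite{AM7} does realize an exact indecomposable $\M$ as modules over an algebra extracted from an internal End / module fiber functor and then cuts it down to a right $H$-simple comodule algebra with trivial coinvariants --- but your steps (i) and (ii) are precisely where all of the work lives, and you explicitly leave both open. In particular, (ii) is not merely ``pare down by a costable ideal'': one must verify that passing to the quotient by a maximal $H$-costable ideal does not change the module category up to equivalence, which again uses exactness in an essential way, and the reduction to trivial coinvariants is a separate argument. As written, neither part of the proposal constitutes a proof.
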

With this in mind, we make the following definition.
\begin{definition}
    An $H$-comodule algebra $A$ is \emph{AM-exact} if $A$ is right $H$-simple with trivial coinvariants. 
\end{definition}

We should also note that if ${}_A\M$ is exact and indecomposable, Theorem \ref{exact Thm} does not imply $A$ is AM-exact. For example, setting $A=A^{co H}=M_n(\kk)$, we see that ${}_A\M$ is an exact ${}_H\M$-module category because ${}_A\M$ is semisimple but $A$ is not AM-exact. Theorem \ref{exact Thm} guarantees the existence of an $AM$-exact $H$-comodule algebra $B$ such that ${}_A\M$ and ${}_B\M$ are equivalent. In this case $B=\kk$ suffices.

If $A,B$ are $H$-comodule algebras and ${}_A\M$ and ${}_B\M$ are equivalent as ${}_H\M$-module categories then we say $A$ and $B$ are \textit{$H$-equivariant Morita equivalent}, which we denote $A\simeq_H B$. It is clear this defines an equivalence relation on the set of exact finite-dimensional $H$-comodule algebras. Theorem \ref{exact Thm} implies that the indecomposable exact left ${}_H\M$-comodule algebras are in bijection with $H$-equivariant Morita equivalence classes of AM-exact $H$-comodule algebras.


To realize a $H$-equivariant Morita equivalence of two AM-exact $H$-comodule algebras $A$ and $B$, we will use $(A,B)$-bimodules, analogous to the study of Morita equivalence of $\kk$-algebras. We notice that if $P\in {}^H_A\M_B$, then $H\tensor_{\kk} P$ is an $(H\tensor_{\kk} A,H\tensor_{\kk} B)$-bimodule (where $H$ is an $H$-bimodule via multiplication). Thus, the coactions $\lambda_A$ and $\lambda_B$ endow $H\tensor_{\kk} P$ with the structure of an $(A,B)$-bimodule.
\begin{definition}
    We say an $(A,B)$-bimodule $M$ is an $H$-equivariant $(A,B)$-bimodule if $M$ is a left $H$-comodule such that the coaction
    \begin{equation*}
        \lambda: P\to H\tensor P
    \end{equation*}
    is a morphism of $(A,B)$-bimodules where $H\tensor P$ is a $(A,B)$-bimodule as above.
\end{definition}
We denote the category of $H$-equivariant $(A,B)$-bimodules by ${}^H_A\M_B$. The morphisms are linear maps that preserve the actions and coaction. In a similar manner we can define categories such as ${}^H\M_B$ and ${}^H_A\M$ by taking $A=\kk$ (and $B=\kk$) respectively. The following result gives an algebraic condition equivalent to equivariant Morita equivalence.
\begin{theorem}[\cite{MPT}, Proposition 3.4]\label{End P Thm}
    The algebras $A,B$ are $H$-equivariant Morita equivalent if and only if there exists $P\in {}^H\M_B$ such that $A\cong \End_B(P)$ as $H$-comodule algebras. In particular, the left $H$-comodule structure is determined by:
    \begin{equation}\label{EQU End Coact}
        \alpha(T_{(-1)})T_0(p) =\alpha\paren{T(p_{(0)})_{(-1)}S^{-1}(p_{(-1)})} T(p_{(0)})_{(0)}
    \end{equation}
    for all $\alpha\in H^*, T\in \End_B(P)$ and $p\in P$. 
\end{theorem}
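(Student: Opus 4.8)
Here the statement bundles classical Morita theory for finite-dimensional $\kk$-algebras together with the bookkeeping needed to carry along the $H$-coactions, so the plan is to pass to the underlying equivalence of abelian categories, invoke classical Morita theory there, and then track how the extra structure transports in each direction.

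For the ``only if'' direction I would start from a $\kk$-linear equivalence of ${}_H\M$-module categories $(\F,c)\colon{}_A\M\to{}_B\M$. Forgetting the module structure, $\F$ is an ordinary equivalence, so classical Morita theory produces an $(A,B)$-bimodule $P$ that is a progenerator as a right $B$-module, with $\F\cong\Hom_A(P,-)$ and with $A\cong\End_B(P)$ as $\kk$-algebras via the tautological action $T\cdot p=T(p)$. From the module constraint $c$ one reads off a linear map $\lambda_P\colon P\to H\tensor P$ (by evaluating $c_{H,A}$ for $H$ the regular left $H$-module and unwinding through the identification $\F\cong\Hom_A(P,-)$), and the two coherence axioms for $(\F,c)$ translate into, respectively, coassociativity and counitality of $\lambda_P$ on one hand and the two one-sided compatibilities making $P$ an $H$-equivariant $(A,B)$-bimodule on the other; the latter then forces, exactly as in the computation below, that $A\cong\End_B(P)$ is an isomorphism of $H$-comodule algebras.

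For the ``if'' direction, given $P\in{}^H\M_B$ which (as in classical Morita theory) is a right-$B$ progenerator with $A\cong\End_B(P)$, I identify $A$ with $\End_B(P)$ acting tautologically on $P$. The key observation is that demanding $P$ be an $H$-equivariant $(\End_B(P),B)$-bimodule, i.e.\ $T(p)_{(-1)}\tensor T(p)_{(0)}=T_{(-1)}p_{(-1)}\tensor T_{(0)}(p_{(0)})$ for all $T,p$, \emph{determines} the coaction on $\End_B(P)$: replacing $p$ by $p_{(0)}$, using coassociativity of $\lambda_P$, and applying the antipode identity $h_{(2)}S^{-1}(h_{(1)})=\epsilon(h)1$ with $h=p_{(-1)}$ yields
\begin{equation*}
T_{(-1)}\tensor T_{(0)}(p)=T(p_{(0)})_{(-1)}S^{-1}(p_{(-1)})\tensor T(p_{(0)})_{(0)},
\end{equation*}
which is precisely \eqref{EQU End Coact} after pairing the $H$-tensorand with $\alpha\in H^*$. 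I would then check in turn: (i) for fixed $T$ the right-hand side, viewed as a function of $\alpha$, takes values in $\End_B(P)$ (right $B$-linearity, using $P\in{}^H\M_B$) and, $\End_B(P)$ being finite-dimensional, comes from a genuine element of $H\tensor\End_B(P)$, so we obtain a map $\lambda\colon\End_B(P)\to H\tensor\End_B(P)$; (ii) $\lambda$ is coassociative and counital; (iii) $\lambda$ is multiplicative, so $(\End_B(P),\lambda)$ is an $H$-comodule algebra; (iv) with this coaction $P$ is an $H$-equivariant $(\End_B(P),B)$-bimodule, whence the classical Morita equivalence ${}_A\M\simeq{}_B\M$ is promoted to an equivalence of ${}_H\M$-module categories by equipping the Morita functor with the module constraint built from $\lambda_P$, its coherence axioms being exactly the equivariance statements of (iv). This gives $A\simeq_H B$ with $A\cong\End_B(P)$ an isomorphism of $H$-comodule algebras.

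The verifications (ii)--(iv) are routine Sweedler-notation manipulations. I expect the genuine obstacle to lie in the ``only if'' direction: extracting $\lambda_P$ cleanly from the abstract module constraint $c$ and proving that the resulting $\kk$-algebra isomorphism $A\cong\End_B(P)$ intertwines $\lambda_A$ with \eqref{EQU End Coact} — essentially a compatibility between the chosen trivializations of $c$ and the classical Morita data. Two further points to handle with care are the handedness conventions (the module category is that of \emph{left} modules, while the Morita bimodule $P$ is naturally a \emph{right} $B$-module, so that ${}^H\M_B$ rather than ${}^H_B\M$ is the relevant category) and the appearance of $S^{-1}$ in place of $S$, which is dictated by the side on which the coaction of $p_{(-1)}$ is cancelled in the computation above.
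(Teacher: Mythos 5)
The paper offers no proof of this statement: it is imported verbatim from \cite{MPT} (Proposition 3.4), so there is no in-text argument to compare yours against. Judged on its own terms, your outline reconstructs the standard argument correctly in its essentials; in particular, your derivation of \eqref{EQU End Coact} from the equivariance condition $T(p)_{(-1)}\tensor T(p)_{(0)}=T_{(-1)}p_{(-1)}\tensor T_{(0)}(p_{(0)})$ --- substituting $p_{(0)}$ for $p$, multiplying the $H$-tensorand on the right by $S^{-1}(p_{(-1)})$, and collapsing $p_{(-1)}S^{-1}(p_{(-2)})=\epsilon(p_{(-1)})1$ --- is exactly right, including the handedness conventions and the reason $S^{-1}$ rather than $S$ appears.

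Two points deserve more care than you give them. First, in the ``if'' direction you simply posit that $P$ is a right $B$-progenerator, whereas the statement asks only for some $P\in{}^H\M_B$ with $A\cong\End_B(P)$; the reason no progenerator hypothesis needs to be imposed is that $B$ is right $H$-simple in this context, so Skryabin's freeness results (the source of Theorem \ref{4.2iii_SKR}, see \cite{SKR}) force every nonzero object of ${}^H\M_B$ to be projective and a generator over $B$. Without that input your argument proves a strictly weaker statement. Second, your ``only if'' direction is a plan rather than a proof: the clean way to extract $\lambda_P$ is to take $P=\G(B)$ for a quasi-inverse module functor $\G:{}_B\M\to{}_A\M$, where $B\in{}^H\M_B$ via $\Delta$, so that the module functor constraint of $\G$ delivers the coaction together with its coassociativity and bimodule compatibility in one stroke; evaluating $c_{H,A}$ at the regular module, as you propose, can be made to work but leaves all of those verifications open. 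You flag both issues yourself, which is to your credit, but as written neither is closed.
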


Suppose $H$ is a Hopf subalgebra of $H'$. Then any left $H$-comodule algebra can be thought of as a left $H'$-comodule algebra. 
In this way, we see that an $H$-comodule algebra is AM-exact if and only if it is AM-exact as an $H'$-comodule algebra, and if $A\simeq_H B$ then $A\simeq_{H'} B$ by Theorem \ref{End P Thm}. On the other hand, in Section \ref{HKP Morita Sub}, we will show that there exists $\kK$-comodule algebras $A,B$ such that $A\not\simeq_{\kK} B$ but $A\simeq_{\hkp} B$. We should also point out an equivalence of module categories is necessarily an equivalence of abelian categories. Thus if $A\simeq_H B$, then $A$ and $B$ are Morita equivalent as $\kk$-algebras.

Viewing $H$ as a left $H$-comodule algebra via $\Delta$, a subcomodule subalgebra $K\subset H$ is called a \emph{left coideal subalgebra} of $H$. These provide an important class of AM-exact $H$-comodule algebras. In particular, every left coideal subalgebra $K\subset H$ is AM-exact. The AM-exactness of coideal subalgebras follows from the proceeding theorem which is implied by Theorem 6.1 in \cite{SKR} (which is much stronger).
\begin{theorem}\label{4.2iii_SKR}
    Let $H$ be finite-dimensional Hopf algebra and $A\subset H$ a left coideal subalgebra. Then $A\in {}^H\M_A$ is simple and every $M\in {}^H\M_A$ is free as an $A$-module.
\end{theorem}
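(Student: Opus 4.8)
The plan is to read off the freeness assertion from Skryabin's structure results for comodule algebras, and then to obtain the simplicity of $A$ in ${}^H\M_A$ as a formal consequence of it. For the freeness, I would check that $A$ meets the hypotheses of \cite[Theorem 6.1]{SKR}. Since $\lambda_A$ is the restriction of $\Delta$, the coinvariants are $A^{\co H}=\{a\in A:\Delta(a)=1\tensor a\}$, and applying $\id\tensor\epsilon$ to such an element gives $a=\epsilon(a)1$; hence $A^{\co H}=\kk 1$ is a field. Finiteness of $H$ makes its antipode bijective and forces the coaction $\lambda_A\colon A\to H\tensor A$ to land in a finite-dimensional space, so, after the standard translation between left and right comodule algebras, all of the integrality and finiteness conditions in \cite{SKR} are satisfied. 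Consequently \cite[Theorem 6.1]{SKR} shows that every $(H,A)$-relative Hopf module that is finitely generated over $A$ is free over $A$; as each $M\in{}^H\M_A$ is finite dimensional and $A$ is a finite-dimensional algebra, this gives the second assertion.

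Granting freeness, the simplicity of $A$ in ${}^H\M_A$ follows by a dimension count. Suppose $J\subseteq A$ is a subobject of $A$ in ${}^H\M_A$ --- equivalently, an $H$-costable right ideal of $A$ --- and suppose $J\neq 0$. Viewed as an object of ${}^H\M_A$ in its own right, $J$ is free over $A$ by the previous step, so $J\cong A^{\oplus k}$ as a right $A$-module for some $k\geq 0$; since $J\neq 0$ we have $k\geq 1$, and hence $\dim_\kk J=k\,\dim_\kk A\geq\dim_\kk A$. On the other hand $J\subseteq A$ forces $\dim_\kk J\leq\dim_\kk A$, so $k=1$ and $J=A$. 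Thus $A$ admits no proper nonzero subobject in ${}^H\M_A$, which is precisely the first assertion.

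All of the real work is packaged inside \cite[Theorem 6.1]{SKR}, so let me indicate what a self-contained proof would have to confront. Projectivity over $A$ is the easy half: $H$ is free as a right $A$-module (a normal basis theorem for coideal subalgebras), so for $M\in{}^H\M_A$ the module $M\tensor_A H$ carries a natural ordinary Hopf-module structure, hence is free over $H$ by the structure theorem for Hopf modules, hence free over $A$; and $M$ is a direct summand of $M\tensor_A H$ as a right $A$-module, since $A$ is self-injective --- coideal subalgebras of finite-dimensional Hopf algebras are Frobenius --- so the right $A$-module inclusion $A\hookrightarrow H$ splits. Promoting this projectivity to honest freeness is the delicate point, and the one I expect to be the main obstacle in any elementary treatment: it is exactly where the right $H$-simplicity of $A$ must be used, and it is the technical heart of \cite{SKR}.
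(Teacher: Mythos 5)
Your proposal matches the paper's treatment: the paper offers no independent proof of this statement, deferring entirely to Skryabin's Theorem 6.1, exactly as you do for the freeness assertion. Your additional step---recovering the simplicity of $A$ in ${}^H\M_A$ from freeness by observing that a nonzero $H$-costable right ideal $J$ is itself an object of ${}^H\M_A$, hence free with $\dim_\kk J = k\dim_\kk A$ for some $k\ge 1$, forcing $J=A$---is correct and supplies a deduction the paper leaves implicit.
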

If $A$ injects as an $H$-comodule into $H$ then $A$ is a left coideal subalgebra. Notice then if $H$ is a Hopf subalgebra of $H'$ then every coideal of $H$ is (isomorphic to) a left coideal of $H'$.

\subsection{Graded Comodule Algebras}

We say a Hopf algebra $H$ has the \textit{dual Chevalley property} if its coradical $H_0\subset H$ is a Hopf subalgebra. In this case, the coradical filtration is a Hopf algebra filtration and to this filtration, we can construct the associated graded Hopf algebra $\gr_c H:=\bigoplus_{n\geq 0} H_n/ H_{n-1}$ (with $H_{-1}=0$).

If $H$ is a coradically graded Hopf algebra with coradical $H_0$, then there exists a graded, braided Hopf algebra $R\in \yd{H_0}$ with $R(0)=\kk$ such that $H\cong R\# H_0$. We let $h\hact r$ denote the $H_0$-action and $\delta_R:R\to H_0\tensor R$ denote the $H_0$-coaction. By construction, $R$ is an $H_0$-module algebra. That is, for all $r,s\in R$ and $h\in H$:
\begin{equation*}
    h\hact (rs)=(h_{(1)}\hact r)(h_{(2)}\hact s)\text{ and } h\hact 1=\epsilon(h)1.
\end{equation*}
We refer to $R$ as the \textit{diagram} of $H$. The grading on $R$ coincides with the coradical gradation on $H$, that is $H(n)=R(n)\# H_0$ for all $n$. In particular $H(0)=1\# H_0\cong H_0$. We notice that every $M\in {}^H\M_R$ is free as an $R$-module by Theorem \ref{4.2iii_SKR}. From this point forward, we will assume $R=\B(V)=\bigoplus_{n\geq 0} \B^n(V)$ is a finite-dimensional Nichols algebra but the results hold for arbitrary (finite-dimensional) $R$. We direct the reader to \cite{ANP} for details on these constructions.

We should note here a construction of $H$-comodule algebras from an $H_0$-comodule algebra. Let $A_0$ be a left $H_0$-comodule algebra and consider the coideal subalgebras $\B(V)=\B(V)\# 1\subset H$. 
\begin{definition}
    The vector space $\B(V)\tensor A_0$ is an $H$-comodule algebra using the diagonal coproduct and multiplication
    \begin{equation*}
        (t\tensor a)(s\tensor b)=t(a_{(-1)}\cdot s)\tensor a_{(0)}b
    \end{equation*}
    for all $t,s\in \B(V)$ and $a,b\in A_0$. We denote this $H$-comodule algebra $\B(V)\# A_0$ and denote a simple tensor $t\tensor a$ by $t\# a$.
\end{definition}
This is a specific case of a more general construction of comodule algebras found in \cite{MPT}. When $A_0$ is a coideal subalgebra of $H_0$, then no confusion arises when considering $\B(V)\# A_0$ as a comodule algebra (in this manner) or as a coideal subalgebra of $H$ because these structures coincide. We note that $\B(V)\# A_0$ is generated by the subalgebras $\B(V)\# 1$ and $1\# A_0$ which are isomorphic to $\B(V)$ and $A_0$ (resp.) as $H$-comodule algebras.

Given a coradically filtered Hopf algebra $H$, and finite-dimensional $H$-comodule algebra $A$, the filtration defined by $A_k=\lambda^{-1}(H_k\tensor A)$ is an exhaustive algebra filtration called the Loewy series of $A$. The associated graded vector space $\gr_\ell A:=\bigoplus_{n\geq 0} A(n)$ (where $A(n)=A_{n}/ A_{n-1}$ with $A_{-1}=0$) is a graded algebra and the coaction $\lambda$ defines a coaction $\bar{\lambda}:\gr_\ell A\to \gr_c H\tensor \gr_\ell A$ such that $(\gr_\ell A,\bar{\lambda})$ is a Loewy-graded left $\gr_c H$-comodule algebra. That is, the Loewy series is given by $A_n=\bigoplus_{i=0}^n A(i)$, and
\begin{equation*}
    \lambda(A(n))\subset\bigoplus_{i=0}^n H(i)\tensor A(n-i)
\end{equation*}
for all $n$. For more details, see \cite{Moss}. Several properties of these filtered and graded objects are determined by $A_0$ and $H_0$. In particular, we have the following theorem.

\begin{theorem}\label{Am Lift Thm}
    Let $H_0\subset H_1\subset \dots H_m\subset H$ be a Hopf algebra filtration such that $H_0$ is a semisimple Hopf subalgebra. If $A$ is a left $H$-comodule algebra, with filtration $A_k:=\lambda^{-1}(H_k\tensor A)$, then the following are equivalent.
    \begin{enumerate}
        \item $A_0$ is an AM-exact $H_0$-comodule algebra,
        \item $\gr(A)$ is an AM-exact $\gr(H)$-comodule algebra, and
        \item $A$ is an AM-exact $H$-comodule algebra.
    \end{enumerate}
\end{theorem}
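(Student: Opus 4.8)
The property ``AM-exact'' bundles two independent conditions, \emph{trivial coinvariants} and \emph{right simplicity}, and the plan is to match each of them across (1), (2), (3) separately. For the coinvariants: since $1\in H_0$, every $a\in A$ with $\lambda(a)=1\tensor a$ already lies in $A_0=\lambda^{-1}(H_0\tensor A)$, and coassociativity together with $\Delta(H_0)\subset H_0\tensor H_0$ forces $\lambda(A_0)\subset H_0\tensor A_0$; hence $\lambda|_{A_0}$ is exactly the $H_0$-coaction and $A^{\co H}=A_0^{\co H_0}$. Running the same argument for the Loewy-graded $\gr H$-comodule algebra $\gr A$ identifies $(\gr A)^{\co\gr H}$ with the $H_0$-coinvariants of $\bar\lambda^{-1}\bigl(H(0)\tensor\gr A\bigr)$, and this space is just $A(0)=A_0$: for each $m$ the ``leading symbol of the coaction'' map $A(m)\to H(m)\tensor A(0)$, which sends the class of $a\in A_m$ to the image of $\lambda(a)$ under $H_m\tensor A\twoheadrightarrow H(m)\tensor A$, is injective -- if it kills $a$ then $\lambda(a)\in H_{m-1}\tensor A$, i.e.\ $a\in A_{m-1}$ -- so a homogeneous element of positive degree cannot be $\gr H$-coinvariant. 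Thus the trivial-coinvariants hypothesis is common to (1), (2), (3), and only right simplicity remains; for that it suffices to prove the two equivalences $(1)\Leftrightarrow(2)$ and $(2)\Leftrightarrow(3)$.

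In each of these equivalences one direction is routine. A nonzero $H$-costable ideal $J\subsetneq A$ induces, through $J_n=J\cap A_n$, a nonzero graded $\bar\lambda$-costable ideal $\gr J\subset\gr A$, proper because $\dim\gr J=\dim J<\dim A$; so ``$\gr A$ right $\gr H$-simple'' implies ``$A$ right $H$-simple''. Conversely, a nonzero $H_0$-costable ideal $J_0\subsetneq A_0$ generates, inside the \emph{graded} algebra $\gr A$, a nonzero $\bar\lambda$-costable ideal whose degree-zero part is exactly $A_0J_0A_0=J_0\neq A_0$ and is therefore proper; so ``$\gr A$ right $\gr H$-simple'' implies ``$A_0$ right $H_0$-simple''. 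Together with the previous paragraph these give $(2)\Rightarrow(3)$ and $(2)\Rightarrow(1)$.

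It remains to prove, in each equivalence, that $\gr A$ is right $\gr H$-simple -- once assuming $A_0$ is right $H_0$-simple (for $(1)\Rightarrow(2)$), and once assuming $A$ is right $H$-simple (for $(3)\Rightarrow(2)$, whence $(3)\Rightarrow(1)$ by the previous paragraph). Suppose $\bar J\subset\gr A$ is a nonzero proper graded $\bar\lambda$-costable ideal, and let $d$ be its lowest nonvanishing degree; then $\bar J(d)\subset A(d)$ is a nonzero $A_0$-sub-bimodule that is $H_0$-costable, that is, a nonzero subobject of $A(d)$ in ${}^{H_0}_{A_0}\M_{A_0}$. If $d=0$ this contradicts ``$A_0$ right $H_0$-simple'' at once; for $d\geq 1$ I would use that right $H_0$-simplicity of $A_0$ makes every object of ${}^{H_0}\M_{A_0}$ free over $A_0$ (the statement of \cite{SKR} strengthening Theorem \ref{4.2iii_SKR}), and then combine the trivial-coinvariants hypothesis with the normalized integral of $H_0$ -- which exists because $H_0$, semisimple in characteristic zero, is also cosemisimple -- to manufacture from $\bar J(d)$ a nonzero proper $H_0$-costable ideal of $A_0$, a contradiction; this settles $(1)\Rightarrow(2)$. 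For $(3)\Rightarrow(2)$ one cannot argue inside $\gr A$, and must instead promote the ideal $\bar J$ to an $H$-costable ideal of $A$: since the filtration has finite length $m$, I would lift a homogeneous generating set of $\bar J$ into $A$ and then correct, by a finite descending induction, the lower-order error that appears because $\lambda$ is only filtered, using the semisimplicity of $H_0$ to discard the $H_0$-coinvariant part of each correction.

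The crux is this last lifting: one must show that the apparent ``leading-term'' obstruction to carrying a graded $\gr H$-costable ideal of $\gr A$ back up to a genuine (proper) $H$-costable ideal of $A$ actually cancels. This is where both the trivial-coinvariants hypothesis and the (co)semisimplicity of $H_0$ do real work, via Skryabin's structure theory of relative Hopf modules over right $H_0$-simple comodule algebras. The supporting facts used along the way -- that $\gr A$ is a Loewy-graded $\gr H$-comodule algebra, that $\lambda(A_n)\subset\sum_i H_i\tensor A_{n-i}$, and that the induced filtrations are compatible with all the structure maps -- are routine and are collected in \cite{Moss}.
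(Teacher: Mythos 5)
The first thing to note is that the paper does not prove Theorem \ref{Am Lift Thm} from scratch: its entire proof is a citation of \cite{MPT} (Proposition 4.4 and Corollary 4.5) for the right-simplicity statements, together with the observation $A^{\co H}=A_0^{\co H_0}$. Your reduction of AM-exactness to ``trivial coinvariants plus right simplicity,'' your identification of the coinvariants in all three settings, and your two easy implications ($(2)\Rightarrow(3)$ via $\gr J$ and $(2)\Rightarrow(1)$ via the graded right ideal $J_0\cdot\gr A$, whose degree-zero part is exactly $J_0$) are all sound. The gap is that the two implications you explicitly defer, $(1)\Rightarrow(2)$ and $(3)\Rightarrow(2)$, are precisely the content of the cited results, and the strategies you sketch for them do not go through as described.

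For $(1)\Rightarrow(2)$: from a nonzero lowest piece $\bar J(d)$ with $d\geq 1$ you cannot ``manufacture a proper $H_0$-costable ideal of $A_0$'' by freeness and integrals, because multiplication in a graded algebra only raises degree, so no algebraic operation carries $\bar J(d)$ down to degree zero; moreover $A(d)$ itself is a nonzero object of ${}^{H_0}\M_{A_0}$, so the mere existence of such an object inside $A(d)$ is no contradiction. The mechanism that actually works is the injective symbol map $A(d)\to H(d)\tensor A(0)$ that you construct in your coinvariants paragraph but do not deploy here: costability of $\bar J$ together with the Loewy property forces the symbol of every element of the lowest nonvanishing piece to land in $H(d)\tensor\bigl(\bar J\cap A(0)\bigr)$, which is zero when $d\geq 1$; injectivity then kills $\bar J(d)$, so $d=0$ and $\bar J(0)$ is a nonzero proper costable right ideal of $A_0$, contradicting (1). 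For $(3)\Rightarrow(2)$: lifting a costable ideal from $\gr A$ back to a proper costable ideal of $A$ is exactly the direction that fails for general filtered algebras (the associated graded typically has strictly more ideals), and your ``correct the lower-order error by descending induction'' has no reason to terminate in an ideal that is still proper. The standard way to close the loop is to avoid lifting ideals altogether: prove the single filtered statement ``$A$ is right $H$-simple if and only if $A_0$ is right $H_0$-simple'' and apply it to both $A$ and $\gr A$, which share the same degree-zero part $A_0$. The converse half of that statement is where the semisimplicity of $H_0$ does genuine work, and it is not supplied by your sketch. As written, your argument proves less than the citation it is meant to replace.
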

\begin{proof}
    The proof follows directly from \cite{MPT} Proposition 4.4 (and Corollary 4.5). From there it is clear $A^{\co H}=A_0^{\co H_0}$ which proves the result.
\end{proof}
In particular if $H$ has the dual Chevalley property, then the coradical filtration is a Hopf algebra filtration and $H_0$ is semisimple. These observations motivate the following three step process for the classification of isomorphism classes of $H$-comodule algebras from \cite{MPT}. First, determine all AM-exact $H_0$-comodule algebras $A_0$. Then determine all graded comodule algebras $\gr_\ell A:=\bigoplus_{n\geq 0} A(n)$ such that $A_0=A(0)$. Lastly determine all lifts of $\gr_\ell A$ along lifts of $\gr_c H$. A proper discussion can be found in \cite{MPT} along with some explicit computations of $H$-equivariant Morita equivalence class representatives for several pointed Hopf algebras via this method. The final step of the proposed method appears to be one of significant difficulty with few tools to utilize systematically. However, we should notice that it may not be necessary to determine the isomorphism class of all AM-exact $H$-comodule algebras to determine representatives for each $H$-equivariant Morita equivalence class.


\section{Endomorphisms of Loewy-graded modules}

Let $H_0$ be a semisimple Hopf algebra, $\B(V)\in\yd{H_0}$ a finite-dimensional Nichols algebra and set $H$ to be the coradically graded Hopf algebra given by the bosonization $H=\B(V)\# H_0$. Let $A$ be a Loewy-graded AM-exact $H$-comodule algebra. Theorem \ref{Am Lift Thm} implies $A_0$ is an AM-exact $H_0$-comodule algebra. Since $H(0)$ is semisimple, ${}_{H(0)}\M$ is fusion, which implies $A_0$ is a semisimple $\kk$-algebra.

\subsection{Graded Actions of Loewy-Graded $H$-Comodule Algebras}
Let $B=\oplus_{n\geq 0} B(n)$ be a Loewy-graded $H$-comodule algebra. We should note that $B_0=B(0)$ and we will use these interchangeably.
\begin{definition}\label{Graded Equiv Ob Def}
    We say that $P\in {}^H\M_B$ is a \emph{Loewy-graded $B$-module} if:
    \begin{enumerate}
        \item $P$ decomposes as a $B(0)$-module, $P=\bigoplus_{n\geq 0} P(n)$, such that $P(n)\cdot B(k)\subset P(n+k)$ for all $n,k$, and
        \item The Loewy-series for $P$ is given by $P_n=\bigoplus_{k=0}^n P(k)$.
    \end{enumerate}
    Similarly, we can define Loewy-graded modules in ${}^H_A\M$ and we say $P\in {}^H_A\M_B$ is a \emph{Loewy-graded $H$-equivariant $(A,B)$-bimodule} if it is graded in both ${}^H_A\M$ and ${}^H\M_B$.
\end{definition} 


We now discuss some properties of $\End_B(P)$ for a Loewy-graded $B$-module $P\in {}^H\M_B$. The first thing to notice is that $\B(V)$ injects (as an $H$-comodule algebra) into $B$. Thus every $P\in {}^H\M_B$ forgets to an object in ${}^H\M_{\B(V)}$ which implies $P$ is a free $\B(V)$-module by Theorem \ref{4.2iii_SKR}.

Assume $P$ is generated as a $B$-module by $P(0)$ and set $S:=\End_B(P)$. By freeness, $P\cong P(0)\tensor \B(V)$ as $\B(V)$-modules. Thus any basis for $P(0)$ can be taken as a free $\B(V)$-basis for $P$ and any $B_0$-linear morphism $\phi:P(0)\to P$ extends to a $\B(V)$-module morphism. Since the morphism commutes with the action of $B_0$ and $\B(V)$, this map defines an element of $S$. Similarly, given any $\Phi\in S$, the restriction $\Phi|_{P(0)}$ is an element in $\Hom_{B(0)}(P(0),P)$. It is clear that these maps realize a linear isomorphism:
\begin{equation}\label{eq:ScongX}
S\cong  X:=\Hom_{B(0)}(P(0),P).
\end{equation}
Since $P$ is a Loewy-graded $B$-module and $B_0$ is semisimple, the projections $\pi_{P(n)}:P\to P(n)$ are $B_0$-linear. Thus $X=\bigoplus_{n\geq 0} X(n)$ where $X(n)=\Hom_{B_0}(P(0),P(n))$. 

By semisimplicity, there exists a finite set $I$ and subset $\set{p_i}_{i\in I}\subset P(0)$ such that $P(0)=\bigoplus_{i\in I} p_iB_0$ with each $p_iB_0$ a simple $B_0$-module. Thus $X(n)=\Hom_{B_0}(P(0), P(n))=\bigoplus_{i\in I}\Hom_{B_0}(p_iB_0, P(n))$ and so every morphism in $X(n)$ can be decomposed as the sum of $B_0$-linear maps that vanish on all but one of $\set{p_i}_{i\in I}$ with image in $P(n)$. Moreover, since $\set{p_i}_{i\in I}$ generates $P(0)$ as a $B_0$-module, it generates $P$ as a $B$-module. That is an arbitrary element in $P(n)$ is of the form $\sum_{j\in I} p_j\cdot b_j^n$ for some elements $b_j^n\in B(n)$ for each $j\in I$.

\begin{proposition}\label{S is Grade prop}
   The linear isomorphism in \eqref{eq:ScongX} pushes the vector space grading of $X$ to $S$. With this grading, $S$ is a graded algebra.
\end{proposition}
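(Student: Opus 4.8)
The plan is to transport the vector-space grading along the isomorphism \eqref{eq:ScongX} and then verify multiplicativity directly, the key point being that elements of $S=\End_B(P)$ are $B$-linear and therefore commute past right multiplication by homogeneous elements of $B$.

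First I would make the transported grading explicit. The isomorphism in \eqref{eq:ScongX} sends $\Phi\in S$ to its restriction $\Phi|_{P(0)}\in X=\Hom_{B(0)}(P(0),P)$, with inverse the unique $B$-linear extension of a $B(0)$-linear map $P(0)\to P$ (which exists because $P$ is free over $\B(V)$). Declaring $S(n)$ to be the preimage of $X(n)=\Hom_{B_0}(P(0),P(n))$, the decomposition $X=\bigoplus_{n\geq 0}X(n)$ transports to a vector-space decomposition $S=\bigoplus_{n\geq 0}S(n)$, and one obtains the intrinsic description $S(n)=\set{\Phi\in S:\Phi(P(0))\subset P(n)}$. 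I would also record that $\id_P|_{P(0)}$ is the inclusion $P(0)\hookrightarrow P$, whose image lies in $P(0)$, so the unit of $S$ lies in $S(0)$.

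Next I would check that $S(n)S(m)\subset S(n+m)$. Let $\Phi\in S(n)$, $\Psi\in S(m)$, and $p\in P(0)$. Then $\Psi(p)\in P(m)$, and since $P$ is generated as a $B$-module by $P(0)$, the grading condition (1) of Definition \ref{Graded Equiv Ob Def} forces $P(m)=P(0)\cdot B(m)$; hence I may write $\Psi(p)=\sum_{j\in I}p_j\cdot b_j$ with $b_j\in B(m)$. Applying $B$-linearity of $\Phi$ gives $\Phi(\Psi(p))=\sum_{j\in I}\Phi(p_j)\cdot b_j$, and since each $\Phi(p_j)\in P(n)$ and $P(n)\cdot B(m)\subset P(n+m)$, we get $(\Phi\Psi)(P(0))\subset P(n+m)$, i.e.\ $\Phi\Psi\in S(n+m)$. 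Together with the previous paragraph this shows $S$ is a graded algebra for the transported grading.

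I do not expect a genuine obstacle here: once the description $S(n)=\set{\Phi:\Phi(P(0))\subset P(n)}$ is available, multiplicativity is essentially forced, and the only step needing a moment's care is the equality $P(m)=P(0)\cdot B(m)$. But this is exactly what is observed in the paragraph preceding the statement, where it is noted that $\set{p_i}_{i\in I}$ generates $P$ as a $B$-module and that an arbitrary element of $P(n)$ has the form $\sum_{j\in I}p_j\cdot b_j^n$ with $b_j^n\in B(n)$.
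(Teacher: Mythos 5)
Your proposal is correct and follows essentially the same route as the paper: both arguments rest on writing an element of $P(m)$ as $\sum_{j\in I}p_j\cdot b_j$ with $b_j\in B(m)$ and then using $B$-linearity of the endomorphism to land in $P(n)\cdot B(m)\subset P(n+m)$. The only cosmetic difference is that the paper first decomposes $X(n)$ into the summands $\Hom_{B_0}(p_iB_0,P(n))$ and checks the composition on maps supported on a single $p_i$, whereas you work with arbitrary elements of $X(n)$ directly.
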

\begin{proof}
    Consider $\phi^{i}_n\in \Hom_{B_0}(p_iB_0,P(n))$ and $\phi_j^k\in \Hom_{B_0}(p_jB_0,P(k))$. There exists $b_n^i\in B(n)$ and $b_k^i\in B(k)$ for each $i\in I$ such that:
    \begin{align}
        \phi_n^i(p_i)&=\sum_{m\in I} p_m\cdot b_n^m, && \phi_k^j(p_j)=\sum_{m\in I} p_m\cdot b_k^m.
    \end{align}
    Now we notice that
    \begin{equation*}
    \phi_k^j\circ \phi_n^i(p_i)=\phi_k^j\paren{\sum_{m\in I} p_m\cdot b_n^m}=\phi_k^j(p_j\cdot b_n^j)=\phi_k^j(p_j)b_n^j =\sum_{m\in I} p_m\cdot (b_k^m b_n^j)\in P(0)\cdot B(n+k)\subset P(n+k).
    \end{equation*}
    Thus $\phi_k^j\circ \phi_n^i\in \Hom_{B_0}(p_iB_0,P(n+k))$. Pushing along the isomorphism to $S$, we see that this implies $S(n)S(k)\subset S(n+k)$. Since $n,k$ were arbitrary, the result holds.
\end{proof}

\begin{theorem}\label{Loew Grade S Thm}
    Let $P\in{}^H\M_B$ be a Loewy-graded $B$-module, generated by $P(0)$. Then $S=\End_B(P)$ is a Loewy-graded $H$-comodule algebra.
\end{theorem}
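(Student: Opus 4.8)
The plan is to upgrade the graded-algebra structure on $S$ from Proposition \ref{S is Grade prop} to a \emph{Loewy-graded $H$-comodule algebra} structure by checking two things: that $S$ carries an $H$-comodule algebra structure at all, and that the $\mathbb{Z}_{\geq 0}$-grading coming from \eqref{eq:ScongX} is precisely the Loewy series for that coaction. The first point is almost immediate: since $P\in{}^H\M_B$, by Theorem \ref{End P Thm} (with $A=\End_B(P)$) the algebra $S=\End_B(P)$ is canonically a left $H$-comodule algebra with coaction determined by \eqref{EQU End Coact}, namely $\alpha(T_{(-1)})T_0(p)=\alpha\paren{T(p_{(0)})_{(-1)}S^{-1}(p_{(-1)})}T(p_{(0)})_{(0)}$. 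So the real content is showing this coaction is compatible with the grading, i.e. that $\lambda_S(S(n))\subset\bigoplus_{i=0}^n H(i)\tensor S(n-i)$ and that the induced filtration $S_n=\bigoplus_{k\le n}S(k)$ equals the Loewy filtration $\lambda_S^{-1}(H_n\tensor S)$.

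First I would fix the free $\B(V)$-basis: choose $\{p_i\}_{i\in I}\subset P(0)$ as in the discussion before Proposition \ref{S is Grade prop}, so $P\cong P(0)\tensor\B(V)$ as $\B(V)$-modules and an element $\Phi\in S(n)$ is determined by $\Phi(p_i)=\sum_m p_m\cdot b^m_n$ with $b^m_n\in B(n)$. Because $P$ is Loewy-graded, the coaction on $P$ satisfies $\lambda_P(P(n))\subset\bigoplus_{i=0}^n H(i)\tensor P(n-i)$, and the $p_i$ live in $P(0)$, hence $\lambda_P(p_i)\in H_0\tensor P_0$; likewise $\lambda_P(p_i\cdot b^m_n)=\lambda_P(p_i)\lambda_B(b^m_n)$ lands in $\bigoplus_{i\le n}H(i)\tensor P(n-i)$ since $b^m_n\in B(n)$ and $B$ is Loewy-graded. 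Plugging these facts into \eqref{EQU End Coact} — and using $S^{-1}(p_{(-1)})\in H_0$ because $p_i\in P(0)$ — shows that evaluating $\lambda_S(\Phi)$ against any $\alpha\in H^*$ produces a sum of terms of the form $h\tensor\Psi$ with $\deg h\le n$ and $\Psi$ mapping $p_i$ into $P(\le n-\deg h)$, i.e. $\Psi\in S(\le n-\deg h)$. This is the inclusion $\lambda_S(S(n))\subset\bigoplus_{i=0}^n H(i)\tensor S(n-i)$. Equivalently, $S(n)\subset \lambda_S^{-1}(H_n\tensor S)=:\widetilde S_n$, so $\bigoplus_{k\le n}S(k)\subset\widetilde S_n$.

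The remaining step is the reverse inclusion $\widetilde S_n\subset\bigoplus_{k\le n}S(k)$, equating the two filtrations. Here I would use the graded decomposition $S=\bigoplus S(k)$ from Proposition \ref{S is Grade prop}: write $\Phi=\sum_k\Phi_k$ with $\Phi_k\in S(k)$, and suppose $\lambda_S(\Phi)\in H_n\tensor S$. Since each $\lambda_S(\Phi_k)\in\bigoplus_{i\le k}H(i)\tensor S(k-i)$ by the previous paragraph, one needs that the ``leading term'' of $\lambda_S(\Phi_k)$ — the $H(k)\tensor S(0)$-component — is nonzero whenever $\Phi_k\ne 0$; this forces $k\le n$. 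For this I would argue that the composite of $\lambda_S$ with projection to $H(k)\tensor S(0)$, restricted to $S(k)$, is injective: tracking \eqref{EQU End Coact} again, the top-degree part of $\lambda_S(\Phi_k)$ is computed from the top-degree part of $\lambda_P$ on $P(k)$, which reconstructs $\Phi_k$ up to the (graded, injective) coaction data on $B$ — concretely, the degree-$k$ coaction on $B(k)$ is injective since $B$ is Loewy-graded (its Loewy series in degree $k$ is $B_k/B_{k-1}$, detected by the $H(k)$-component), and $P$ is free over $\B(V)$. Granting this injectivity, $\Phi\in\widetilde S_n$ implies every $\Phi_k$ with $k>n$ vanishes, giving $\widetilde S_n=\bigoplus_{k\le n}S(k)$. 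Combined with $\gr$-compatibility of $\lambda_S$, this says the grading on $S$ \emph{is} its Loewy grading, so $S$ is a Loewy-graded $H$-comodule algebra.

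I expect the main obstacle to be exactly this last injectivity/leading-term analysis: showing that no nonzero homogeneous $\Phi_k\in S(k)$ can have its $S$-coaction supported in strictly lower $H$-degree. The cleanest route is probably to exploit the freeness of $P$ over $\B(V)=\B(V)\#1\subset B$ together with the fact that the $\B(V)$-coaction is the standard one on the Nichols algebra, so that the degree-$k$ component of $\lambda_P$ on $P(k)=P(0)\cdot\B^k(V)$ is faithfully recorded by the $H(k)=\B^k(V)\#H_0$ slot; an alternative is to invoke Theorem \ref{Am Lift Thm}-style arguments to identify $\gr_\ell S$ with $\End_{\gr_\ell B}(\gr_\ell P)$ and reduce the claim to the already-established graded case. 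Everything else is bookkeeping with Sweedler notation in \eqref{EQU End Coact}.
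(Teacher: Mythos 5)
Your first inclusion ($\bigoplus_{k\le n}S(k)\subset \lambda_S^{-1}(H_n\tensor S)$) is correct and essentially matches the paper: plug into \eqref{EQU End Coact}, use $p_{(-1)}\in H_0$ and $\phi(p_{(0)})\in P_n$, and conclude that every $\alpha\in H^*$ vanishing on $H_n$ kills $\lambda_S(\phi)$. The genuine gap is in the reverse inclusion. You reduce it to the claim that for nonzero $\Phi_k\in S(k)$ the $H(k)\tensor S(0)$-component of $\lambda_S(\Phi_k)$ is nonzero, and then only gesture at a proof via the Nichols algebra coaction on $B(k)$ and freeness of $P$ over $\B(V)$ --- you flag this yourself as the main obstacle. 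As written this is not a proof: injectivity of the degree-$k$ coaction on $B(k)$ does not by itself control $\lambda_S(\Phi_k)$, since $\Phi_k$ is only recorded by elements $b_n^m\in B(n)$ up to the relations imposed by the $B_0$-module structure of $P(0)$, and the sketch never bridges that.

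The idea you are missing is that this direction is actually the easy one, because \eqref{EQU End Coact} is precisely the statement that $\lambda_P$ is a morphism of left $S$-modules: $\lambda_P(\phi(p))=\lambda_S(\phi)\lambda_P(p)$. Hence if $\phi$ lies in the Loewy filtration $S_n$, then for every generator $p_i\in P(0)$,
\begin{equation*}
\lambda_P(\phi(p_i))=\lambda_S(\phi)\lambda_P(p_i)\in (H_n\tensor S)(H_0\tensor P(0))\subset H_n\tensor P,
\end{equation*}
and condition (2) of Definition \ref{Graded Equiv Ob Def} (the grading of $P$ \emph{is} its Loewy filtration) forces $\phi(p_i)\in\bigoplus_{k=0}^n P(k)$, i.e.\ $\phi\in\bigoplus_{k=0}^n S(k)$ under \eqref{eq:ScongX}. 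No leading-term analysis, no appeal to the internal structure of $\B(V)$, and no identification of $\gr_\ell S$ with an endomorphism algebra of $\gr_\ell P$ is needed; your injectivity claim is in fact true, but its shortest proof is this same computation run contrapositively, so the detour buys nothing. A smaller issue: you assert $\lambda_P(P(n))\subset\bigoplus_{i\le n}H(i)\tensor P(n-i)$ as part of being a Loewy-graded module, whereas Definition \ref{Graded Equiv Ob Def} only gives the filtered statement $\lambda_P(P_n)\subset H_n\tensor P$ --- which is all the argument requires anyway.
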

\begin{proof}
    We have already shown that $S$ has the structure of a graded algebra. We will now show that the filtration, $S^n:=\bigoplus_{i=0}^n S(i)$, coincides with the Loewy filtration, $S_n$.

    Let $\phi\in S_n$. Then for all $i$, 
    \begin{equation*}
        \lambda_P(\phi(p_i))=\lambda_S(\phi)\lambda_P(p_i)\in (H_n\tensor S)(H_0\tensor P(0))\subset H_n\tensor P.
    \end{equation*} 
    This implies $\phi(p_i)\subset \bigoplus_{k=0}^n P(k)$ for all $i$. Thus $\phi$ restricts to a map in $\bigoplus_{k=0}^n \Hom_{B_0}(P(0), P(k))=\bigoplus_{k=0}^n X(k)$. Pushing along the isomorphism to $S$, we see $\phi\in S^n$ and since $\phi$ was arbitrary, $S_n\subset S^n$.

    Since an element of $S$ is entirely determined by the image of $P(0)$, we see that $\phi\in S_n$ if and only if 
    \begin{equation}\label{eq alpha}
        \alpha(\phi_{(-1)})\phi_{(0)}(p)=0
    \end{equation}   
    for all $\alpha\in H^*$ with $H_n\subset \ker{\alpha}$ and all $p\in P(0)$. By construction (see \eqref{EQU End Coact})
    \begin{equation*}
        \alpha(\phi_{(-1)})\phi_{(0)}(p)=\alpha\paren{\phi(p_{(0)})_{(-1)}S^{-1}(p_{(-1)})}\phi(p_{(0)})_{(0)}.
    \end{equation*}
    
    If $\phi\in S^n$ then $\phi(P(0))\subset \bigoplus_{k=0}^n P(k)$. Since $p_{(-1)}\tensor p_{(0)}\subset H_0\tensor P(0)$ for all $p\in P(0)$, we have 
    \begin{equation*}
        p_{(-1)}\tensor \phi(p_{(0)})\in H_0\tensor \paren{\oplus_{k=0}^n P(k)}=H_0\tensor P_n.
    \end{equation*}
    Thus, for all $p\in P(0), \phi(p_{(0)})_{(-1)}\tensor \phi(p_{(0)})_{(0)}\in H_n\tensor P$. Since $H_0$ is a Hopf subalgebra of $H$, $S^{-1}(p_{(-1)})\in H_0$ and thus
    \begin{equation*}
        \paren{\phi(p_{(0)})_{(-1)}S^{-1}(p_{(-1)})}\tensor \phi(p_{(0)})_{(0)}\in H_n\tensor P
    \end{equation*}
    thus (\ref{eq alpha}) holds for all $\alpha$ vanishing on $H_n$ and $p\in P(0)$. This implies $\phi\in S_n$ and since $\phi$ was arbitrary, $S^n\subset S_n$.
\end{proof}
\begin{corollary}\label{Deg 0 Corollary}
    If $P\in {}^H\M_B$ is a Loewy-graded $B$-module that is generated in degree zero and $S=\End_B(P)$, then $S(0)$ is isomorphic to $\End_{B_0}(P(0))$ and $S(0)\simeq_{H_0} B_0$.
\end{corollary}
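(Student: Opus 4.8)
The plan is to refine, in degree zero, the graded--vector--space identification $S\cong X=\Hom_{B_0}(P(0),P)$ of \eqref{eq:ScongX} into an identification of $H_0$-comodule algebras, and then feed this into Theorem \ref{End P Thm} applied over $H_0$ instead of $H$. First I would settle the algebra isomorphism. By Proposition \ref{S is Grade prop} and Theorem \ref{Loew Grade S Thm} the grading on $S$ is the Loewy grading, so $S(0)=S_0$ consists exactly of those $\Phi\in\End_B(P)$ with $\Phi(P(n))\subseteq P(n)$ for all $n$; in particular $\Phi(P(0))\subseteq P(0)$, so restriction to $P(0)$ defines a map $S(0)\to\End_{B_0}(P(0))$, which is visibly unital and multiplicative. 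Under \eqref{eq:ScongX} this map is the degree-zero component of $S\cong X$, with image $X(0)=\Hom_{B_0}(P(0),P(0))=\End_{B_0}(P(0))$, hence it is bijective (an element of $S$ is determined by its restriction to $P(0)$, and any $B_0$-linear map $P(0)\to P(0)\subseteq P$ extends to $S$ by freeness of $P$ over $\B(V)$, exactly as in the discussion preceding \eqref{eq:ScongX}). This gives $S(0)\cong\End_{B_0}(P(0))$ as algebras.

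Next I would match the coactions. Since $S$ is a Loewy-graded $H$-comodule algebra, $\lambda_S(S(0))\subseteq H(0)\tensor S(0)=H_0\tensor S(0)$, so $S(0)$ carries a natural $H_0$-comodule algebra structure. Likewise $\lambda_P(P(0))\subseteq H_0\tensor P(0)$ and the $B_0$-action preserves $P(0)$, so $P(0)\in{}^{H_0}\M_{B_0}$, and Theorem \ref{End P Thm} (with $H_0$ in place of $H$) endows $\End_{B_0}(P(0))$ with an $H_0$-comodule algebra structure described by \eqref{EQU End Coact}. The crucial point is that these two structures agree under $\Phi\mapsto\Phi|_{P(0)}$. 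Indeed, the $H$-comodule structure on $S=\End_B(P)$ is itself governed by \eqref{EQU End Coact}, and for $\phi\in S(0)$ and $p\in P(0)$ we have $p_{(-1)}\in H_0$, $p_{(0)}\in P(0)$, hence $\phi(p_{(0)})\in P(0)$ and $\phi(p_{(0)})_{(-1)}\in H_0$; since $H_0$ is a Hopf subalgebra, $S^{-1}(p_{(-1)})\in H_0$, so the right-hand side of \eqref{EQU End Coact} for $\End_B(P)$ involves only $\alpha|_{H_0}$ and coincides verbatim with the defining formula for the $H_0$-coaction on $\End_{B_0}(P(0))$ (the antipode of $H_0$ being the restriction of $S^{-1}$). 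Since that formula determines the coaction, $S(0)\cong\End_{B_0}(P(0))$ as $H_0$-comodule algebras.

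Finally, with $P(0)\in{}^{H_0}\M_{B_0}$ and $S(0)\cong\End_{B_0}(P(0))$ as $H_0$-comodule algebras, Theorem \ref{End P Thm} yields $S(0)\simeq_{H_0}B_0$, completing the proof. I expect the coaction-matching step to be the only real obstacle: one has to be careful that the degree-zero truncation of the $H$-coaction on $S$ genuinely reproduces the intrinsic $H_0$-coaction on $\End_{B_0}(P(0))$ coming from $P(0)\in{}^{H_0}\M_{B_0}$, and this is precisely where the explicit formula \eqref{EQU End Coact} together with $H_0$ being a Hopf subalgebra (so that $S^{-1}$ restricts to $H_0$) do the work. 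Everything else is bookkeeping with the grading already established in Proposition \ref{S is Grade prop} and Theorem \ref{Loew Grade S Thm}.
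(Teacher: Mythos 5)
Your proposal is correct and follows essentially the same route as the paper: the paper's own proof simply notes that the graded linear isomorphism \eqref{eq:ScongX} descends to an algebra isomorphism $S(0)\cong X(0)=\End_{B_0}(P(0))$ and then invokes Theorem \ref{End P Thm}. The only difference is one of detail: you explicitly verify that the degree-zero truncation of the coaction \eqref{EQU End Coact} on $\End_B(P)$ agrees with the intrinsic $H_0$-coaction on $\End_{B_0}(P(0))$, a compatibility the paper leaves implicit, and your verification is sound.
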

\begin{proof}
    The graded linear isomorphism \eqref{eq:ScongX} descends to an algebra isomorphism $S(0)\cong X(0)$. The $H_0$-equivariant Morita equivalence of $S(0)$ and $B_0$ then follows from Theorem \ref{End P Thm}.
\end{proof}

\subsection{A Meaningful Example}

Again, $H=\B(V)\# H_0$ is a coradically graded Hopf algebra and $A,B$ Loewy-graded $H$-comodule algebras. In general it is not the case that $A(0)\simeq_{H_0}B(0)$ implies $A\simeq_H B$. For example, taking $A=H$ then we see $A(0)$ and $H(0)$ are isomorphic so $A(0)\simeq_{H_0} H(0)$. However, $H\not\simeq_H H_0$ because $H_0$ is semisimple and $H$ is not, thus ${}_H\M$ and ${}_{H_0}\M$ are not equivalent as abelian categories, much less equivalent as ${}_H\M$-module categories. Instead we ask whether a Loewy-graded $H$-comodule algebra $A$ and $H_0$-equivariant Morita equivalence in degree zero is induced from an $H$-equivaraint Morita equivalence.
\begin{question}\label{Q Morita Ext}
    Let $A$ be an AM-exact Loewy-graded left $H$-comodule algebra. For every $H_0$-equivariant Morita equivalence $A(0)\simeq_{H_0} X$, does there exists a Loewy-graded left $H$-comodule algebra $B$ (isomorphic to $X$ in degree zero) such that $A\simeq_H B$?
\end{question}
We will now answer this question affirmatively in the case when $A=\B(V)\# A(0)$. In particular we will construct a graded $A$-module $P\in {}^H\M_A$ satisfying the hypotheses of Theorem \ref{Loew Grade S Thm} and show $B=\End_A(P)$ satisfies the conclusion.

\begin{proposition}\label{Extend P0 Prop}
    Let $A_0$ be a left $H_0$-comodule algebra and set $A=\B(V)\# A_0$. Suppose $P_0\in {}^{H_0}\M_{A_0}$, then $P:=P_0\tensor \B(V)\in {}^H\M_A$ where the right $A$-action is given by
    \begin{equation*}
        (p\tensor s)\cdot (t\# a)=(p\cdot a_{(0)})\tensor \bracks{S^{-1}(a_{(-1)})\hact (st)} 
    \end{equation*}
    for all $p\in P_0, s,t\in \B(V)$ and $a\in A_0$ where $\hact$ is the right $H_0$ action on $\B(V)\in\yd{H_0}$. The coaction $\lambda_P:P\to H\tensor P$ is given by the diagonal coaction (where we view $\B(V)$ as a coideal subalgebra of $H$).
\end{proposition}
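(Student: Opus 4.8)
The plan is to recognize $P=P_0\tensor\B(V)$ as the module \emph{induced} from $P_0$ along the inclusion of $H$-comodule algebras $1\#A_0\hookrightarrow A$, i.e.\ $P\cong P_0\tensor_{A_0}A$. The induced module carries an evident structure of an object of $\,{}^H\M_A$, and transporting this structure across an explicit linear isomorphism $P_0\tensor_{A_0}A\xrightarrow{\sim}P_0\tensor\B(V)$ will reproduce exactly the right $A$-action and the diagonal coaction asserted in the statement. The one substantive input is that $A=\B(V)\#A_0$ is \emph{free as a left $A_0$-module}, with $\{t\#1\}$, $t$ ranging over a homogeneous basis of $\B(V)$, as a basis.

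First I would establish this freeness. Consider the left $A_0$-linear map
\begin{equation*}
\Theta\colon A_0\tensor\B(V)\longrightarrow A,\qquad a\tensor t\longmapsto (1\#a)(t\#1)=(a_{(-1)}\hact t)\#a_{(0)},
\end{equation*}
where $A_0$ acts on the left tensorand by multiplication. Because the $H_0$-action on $\B(V)$ is graded, $\Theta$ respects the grading by $\B(V)$-degree; since the graded pieces on both sides are finite-dimensional of the same dimension $\dim A_0\cdot\dim\B^n(V)$, it suffices to check surjectivity, and one verifies directly that
\begin{equation*}
\Theta\!\left(b_{(0)}\tensor\bracks{S^{-1}(b_{(-1)})\hact s}\right)=s\#b
\end{equation*}
using the module-algebra axioms for $\B(V)$, coassociativity, and the antipode identity $\sum h_{(2)}S^{-1}(h_{(1)})=\epsilon(h)1$ (valid since $S$ is bijective). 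Hence $\Theta$ is an isomorphism of left $A_0$-modules, and composing $\id_{P_0}\tensor_{A_0}\Theta^{-1}$ with the canonical isomorphism $P_0\tensor_{A_0}(A_0\tensor\B(V))\cong P_0\tensor\B(V)$ yields a linear isomorphism $P_0\tensor_{A_0}A\xrightarrow{\sim}P_0\tensor\B(V)$ under which $p\tensor_{A_0}(t\#a)\mapsto(p\cdot a_{(0)})\tensor\bracks{S^{-1}(a_{(-1)})\hact t}$; in particular $p\tensor_{A_0}(t\#1)\mapsto p\tensor t$.

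Next, $P_0\tensor_{A_0}A$ is an object of $\,{}^H\M_A$ in the standard way: the right $A$-action is right multiplication on the $A$-factor, and the left $H$-coaction is the diagonal one $p\tensor_{A_0}a\mapsto p_{(-1)}a_{(-1)}\tensor(p_{(0)}\tensor_{A_0}a_{(0)})$, where $P_0$ is viewed as an $H$-comodule via $H_0\hookrightarrow H$. Checking that this coaction is well defined over $\tensor_{A_0}$, counital, coassociative, and a morphism of right $A$-modules $P_0\tensor_{A_0}A\to H\tensor(P_0\tensor_{A_0}A)$ uses only that $\lambda_A$ is an algebra map, that $1\#A_0\hookrightarrow A$ is a morphism of $H$-comodule algebras (immediate from the definition of $\B(V)\#A_0$), and that $P_0\in{}^{H_0}\M_{A_0}$; alternatively one may simply invoke the general fact that induction along a morphism of $H$-comodule algebras carries $\,{}^H\M_{A_0}$ into $\,{}^H\M_A$.

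Finally I would transport the $A$-action and the $H$-coaction of $P_0\tensor_{A_0}A$ along the isomorphism of the second step. For the action, the generator $p\tensor t\leftrightarrow p\tensor_{A_0}(t\#1)$ satisfies $(p\tensor_{A_0}(t\#1))\cdot(s\#a)=p\tensor_{A_0}(ts\#a)$ because $(t\#1)(s\#a)=ts\#a$, and this maps to $(p\cdot a_{(0)})\tensor\bracks{S^{-1}(a_{(-1)})\hact(ts)}$, which is the asserted formula after the evident relabelling of $s$ and $t$. For the coaction, $\B(V)\#1$ being a left coideal subalgebra of $H$ means $\lambda_A$ restricts on it to $\Delta$, landing in $H\tensor(\B(V)\#1)$, so the transported coaction on $p\tensor t$ is precisely the diagonal coaction with $\B(V)$ coacting via its $H$-coaction, as claimed. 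The only place requiring real care is the first step: keeping the Sweedler bookkeeping with $S^{-1}$ and (co)associativity straight and justifying the antipode identity invoked; everything afterwards is formal. I should add that one can bypass the induced-module language entirely and verify the module axioms, comodule axioms, and $\,{}^H\M_A$-compatibility directly from the displayed formulas — this is the same computation, with associativity of the right $A$-action being the one nonformal identity, which after cancellations using $\sum S^{-1}(h_{(2)})h_{(1)}=\epsilon(h)1$ reduces to associativity of $P_0$ as an $A_0$-module.
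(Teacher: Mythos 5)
Your proposal is correct, but it takes a genuinely different route from the paper's. The paper proves the proposition by direct verification: it first checks that the displayed formula is compatible with the two types of defining relations of the smash product, i.e. it computes $(p\tensor r)\cdot\bigl((t\#1)(1\#a)\bigr)$ and $(p\tensor r)\cdot\bigl((1\#a)(t\#1)\bigr)$ against the iterated actions, with the inverse-antipode identity $h_{(2)}S^{-1}(h_{(1)})=\epsilon(h)1$ doing the work in the second (nontrivial) computation — exactly the ``one nonformal identity'' you predict in your closing remark — and then checks that $\lambda_P$ is a morphism of right $A$-modules on the generators $p\tensor 1$, treating the cases $t=1$ and $a=1$ separately. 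You instead identify $P$ with the induced module $P_0\tensor_{A_0}A$ via the left $A_0$-module isomorphism $\Theta\colon A_0\tensor\B(V)\to A$ and transport the standard structure. Your key computation $\Theta\bigl(b_{(0)}\tensor\bracks{S^{-1}(b_{(-1)})\hact s}\bigr)=s\#b$ checks out, and it is essentially the same Sweedler manipulation as the paper's second display, just packaged as an inverse to $\Theta$ rather than as an associativity check. What your approach buys is a conceptual explanation of where the $S^{-1}$ in the action formula comes from, automatic associativity of the right $A$-action (right multiplication on $A$ is tautologically associative), and a reusable freeness statement for $A$ over $A_0$; what it costs is the need to justify that induction along $1\#A_0\hookrightarrow A$ lands in ${}^H\M_A$, in particular that the diagonal coaction descends through $\tensor_{A_0}$ and is a right $A$-module map. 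You name the correct ingredients for this ($\lambda_A$ an algebra map, $1\#A_0$ a subcomodule algebra, $P_0\in{}^{H_0}\M_{A_0}$) but leave it as a sketch; writing it out would be comparable in length to the paper's direct verification.
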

\begin{proof}
    Let $P_0, A_0$ be as given. To see that $P\in \M_A$, we first see that $1\# 1$ acts by the identity. It is also clear that $P\in \M_{\B(V)}$. Moreover, since $\B(V)$ is a right $H_0$-module via $S^{-1}$, we see that the action defines a $A_0$-module structure. Thus we see that for all $p\in P_0, r,t\in \B(V)$, and $a\in A$:
    \begin{align*}
        (p\tensor r)\cdot ((t\# 1)(1\# a))&=(p\tensor r)\cdot (t\# a)\\
        &=p\cdot a_{(0)}\tensor \bracks{S^{-1}(a_{(-1)})\hact (rt)}\\
        &=(p\tensor rt)\cdot (1\# a)\\
        &=((p\tensor r)\cdot (t\# 1))\cdot(1\# a)
    \end{align*}
    and
    \begin{align*}
        (p\tensor r)\cdot ((1\# a)(t\# 1))&=(p\tensor r)\cdot ((a_{(-1)}\cdot t)\# a_{(0)})\\
        &=(p\cdot a_{(0)})\tensor \bracks{S^{-1}(a_{(-1)})\hact (r(a_{(-2)}\hact t))}\\
        &=(p\cdot a_{(0)})\tensor \bracks{S^{-1}(a_{(-1)})\hact r}\bracks{S^{-1}(a_{(-2)})\hact (a_{(-3)}\hact t)}\\
        &=(p\cdot a_{(0)})\tensor \bracks{S^{-1}(a_{(-1)})\hact r}\bracks{S^{-1}(a_{(-2)})a_{(-3)}\hact t}\\
        &=(p\cdot a_{(0)})\tensor \bracks{S^{-1}(a_{(-1)})\hact r}\bracks{\epsilon(a_{(-2)})\hact t}\\
        &=(p\cdot a_{(0)})\tensor \bracks{S^{-1}(a_{(-1)}\epsilon(a_{(-2)}))\hact r}t\\
        &=(p\cdot a_{(0)})\tensor (S^{-1}(a_{(-1)})\hact r)t\\
        &=\bracks{(p\cdot a_{(0)})\tensor S^{-1}(a_{(-1)})\hact r)}\cdot (t\# 1)\\
        &=\bracks{(p\tensor r)\cdot (1\# a)}\cdot (t\# 1).
    \end{align*}
    Since $A$ is generated by $\B(V)\# 1$ and $1\# A_0$, the computations above imply $P\in \M_A$. We note that $A$ is free as a $\B(V)$-module of rank $\dim(P_0)$.
 
    To show $P\in {}^H\M_A$, we need to show $\lambda_P$ is a morphism of right $A$-modules. Notice that $P$ is generated as a left $A$ module by $P_0=P_0\tensor 1_{\B(V)}\subset P$. Thus since $\lambda_A$ is a morphism of algebras, it is sufficient to show
    \begin{equation*}
    \lambda_P((p\tensor 1)\cdot (t\# a))=\lambda_P(p\tensor 1)\cdot \lambda_B(t\# a))
    \end{equation*}
    for all $p\in P_0, t\in \B(V)$ and $a\in A_0$ and since $A$ is generated by $\B(V)$ and $A_0$, it is sufficient to prove the two cases: $t=1$ or $a=1$. First, if $t=1$, then
    \begin{align*}
        \lambda_P((p\tensor 1)\cdot (1\# a))&=\lambda_P\paren{(p\cdot a_{(0)})\tensor \epsilon(S^{-1}(a_{(-1)}))}\\
        &=\lambda_P(p\cdot a_{(0)}\epsilon(a_{(-1)})\tensor 1)\\
        &=\lambda_P((p\cdot a)\tensor 1)\\
        &=(p_{(-1)}a_{(-1)})\tensor ((p_{(0)}\cdot a_{(0)})\tensor 1)\\
        &=(p_{(-1)}a_{(-1)})\tensor ((p_{(0)}\cdot (a_{(0)})_{(0)})\tensor \epsilon(S^{-1}((a_{(0)})_{(-1)}))\\
        &=(p_{(-1)}\tensor (p_{(0)}\tensor 1))\cdot (a_{(-1)}\tensor (1\# a_{(0)}))\\
        &=\lambda_P(p\tensor 1)\cdot \lambda_B(1\# a).
    \end{align*}
    Similarly, if $a=1$, then
    \begin{align*}
        \lambda_P((p\tensor 1)\cdot (t\# 1))&=\lambda_P(p\tensor t)\\
        &=p_{(-1)}t_{(-1)}\tensor (p_{(0)}\tensor t_{(0)})\\
        &=(p_{(-1)}\tensor (p_{(0)}\tensor 1))\cdot (t_{(-1)}\tensor (t_{(0)}\# 1))\\
        &=\lambda_P(p\tensor 1)\cdot \lambda_B(t\# 1).
    \end{align*}
    These computations show $P\in {}^H\M_A$.
\end{proof}
From here we can notice that $P$ inherits a grading from $\B(V)$. In particular, we can see that $P=\bigoplus_{n=0} P(n)$ as vector spaces where $P(n):=P_0\tensor \B^n(V)$. We now show that $P$ is a Loewy-graded $A$-module with this grading.

\begin{proposition}\label{Examp is Graded P prop}
    $P$ is a Loewy-graded $A$-module with the grading $P(n)=P_0\tensor \B^n(V)$. Moreover $P$ is generated in degree zero as a $A$-module.
\end{proposition}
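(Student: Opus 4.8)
The plan is to verify the two conditions of Definition~\ref{Graded Equiv Ob Def}: (i) that $P=\bigoplus_{n\ge 0}P(n)$ with $P(n)=P_0\tensor\B^n(V)$ is a decomposition into $A(0)$-submodules satisfying $P(n)\cdot A(k)\subseteq P(n+k)$, and (ii) that the Loewy series of $P$ is $P_n=\bigoplus_{k=0}^n P(k)$; and then to note that $P$ is generated by $P(0)$. Part (i) and the generation claim fall out immediately from the action formula of Proposition~\ref{Extend P0 Prop}; the content is (ii), and within it the inclusion $P_n\subseteq\bigoplus_{k\le n}P(k)$.

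\emph{Grading and generation.} As vector spaces $P=\bigoplus_n P(n)$ because $\B(V)=\bigoplus_n\B^n(V)$. The action $\hact$ of $H_0$ on $\B(V)$ preserves the grading (each $\B^n(V)$ is a Yetter--Drinfeld submodule of $\B(V)\in\yd{H_0}$) and multiplication of $\B(V)$ is graded, so for $p\tensor s\in P(n)$ (i.e.\ $s\in\B^n(V)$) and $t\#a\in A(k)$ (i.e.\ $t\in\B^k(V)$) the formula $(p\tensor s)\cdot(t\#a)=(p\cdot a_{(0)})\tensor\bracks{S^{-1}(a_{(-1)})\hact(st)}$ lands in $P_0\tensor\B^{n+k}(V)=P(n+k)$. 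Taking $k=0$ shows each $P(n)$ is stable under $A(0)=1\#A_0$, so this is a decomposition of $A(0)$-modules with $P(n)\cdot A(k)\subseteq P(n+k)$. Since also $(p\tensor 1)\cdot(t\#1)=p\tensor t$, we get $P(0)\cdot(\B^n(V)\#1)=P(n)$, and as $A$ is generated by $\B(V)\#1$ and $1\#A_0$ this forces $P(0)\cdot A=P$: $P$ is generated in degree zero. Finally $P\in{}^H\M_A$ by Proposition~\ref{Extend P0 Prop}, so only (ii) remains.

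\emph{Loewy series, the elementary inclusion.} Write $H=\B(V)\#H_0$, $H(n)=\B^n(V)\#H_0$, and recall $\lambda_P$ is the diagonal coaction with $\B(V)$ regarded as a left coideal subalgebra of $H$. The bosonization coproduct, together with the facts that the braided comultiplication of $\B(V)$ respects the grading and the $H_0$-coaction on $\B(V)$ is degree preserving, gives $\lambda_{\B(V)}\big(\B^k(V)\big)\subseteq\bigoplus_{i=0}^k H(i)\tensor\B^{k-i}(V)$. Tensoring with $\lambda_{P_0}$ (whose image lies in $H(0)\tensor P_0$) and using that $H$ is a graded algebra yields $\lambda_P\big(P(k)\big)\subseteq\bigoplus_{i=0}^k H(i)\tensor P(k-i)\subseteq H_k\tensor P$, hence $\bigoplus_{k\le n}P(k)\subseteq P_n:=\lambda_P^{-1}(H_n\tensor P)$.

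\emph{Loewy series, the hard inclusion --- the main obstacle.} Conversely let $0\ne x\in P_n$, write $x=\sum_{k=0}^N x_k$ with $x_k\in P(k)$ and $x_N\ne 0$; I claim $N\le n$. By the degree bound above, the $H(N)\tensor P$-component of $\lambda_P(x)$ is $(\pi_{H(N)}\tensor\id)\lambda_P(x_N)$, and it lies in $H(N)\tensor P(0)$; tracking through the bosonization formula, the only contribution to it comes from the summand $s\tensor 1$ of the braided coproduct of $s\in\B^N(V)$ (which the counit axiom of $\B(V)$ pins down), so it equals $\theta_N(x_N)$, where
\begin{equation*}
\theta_N\colon P_0\tensor\B^N(V)\longrightarrow H(N)\tensor P_0,\qquad p\tensor s\longmapsto\bracks{(1\#p_{(-1)})(s\#1)}\tensor p_{(0)}.
\end{equation*}
Now $\theta_N$ is the composite of $\lambda_{P_0}\tensor\id$, the flip of the last two tensor factors, and $\Psi\tensor\id_{P_0}$, where $\Psi\colon H_0\tensor\B^N(V)\to\B^N(V)\#H_0=H(N)$, $h\tensor s\mapsto(1\#h)(s\#1)$. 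Here $\lambda_{P_0}$ is injective (split by the counit) and $\Psi$ is a linear isomorphism, being the composite of the action-twist bijection $H_0\tensor\B^N(V)\xrightarrow{\sim}\B^N(V)\tensor H_0$, $h\tensor s\mapsto(h_{(1)}\hact s)\tensor h_{(2)}$ (with inverse $r\tensor h\mapsto h_{(2)}\tensor(S^{-1}(h_{(1)})\hact r)$), with the canonical identification $\B^N(V)\tensor H_0=\B^N(V)\#H_0$. Hence $\theta_N$ is injective, so $\theta_N(x_N)\ne 0$; thus $\lambda_P(x)$ has a nonzero $H(N)\tensor P$-component and so lies in no $H_m\tensor P$ with $m<N$. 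Since $x\in P_n$ gives $\lambda_P(x)\in H_n\tensor P$, we get $n\ge N$, proving $P_n=\bigoplus_{k\le n}P(k)$. The delicate step is precisely this leading-term/injectivity argument; I expect the Sweedler-index verification that $\Psi$ is invertible to be the most error-prone part, though it is a standard feature of bosonizations.
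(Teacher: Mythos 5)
Your proof is correct and takes essentially the same approach as the paper's: the generation claim and Condition 1 are read off from the freeness of $P$ over $\B(V)$ and the action formula of Proposition \ref{Extend P0 Prop}, and Condition 2 is reduced to the Loewy-gradedness of $\B(V)$ together with the fact that $P_0$ coacts in degree zero. The paper disposes of Condition 2 in a single sentence; your leading-term/injectivity argument (via the bijectivity of $h\tensor s\mapsto (1\# h)(s\# 1)$ and the injectivity of $\lambda_{P_0}$) is a correct and complete expansion of that assertion.
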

\begin{proof}
    Recall $A=\B(V)\#A_0$ is Loewy-graded with $A(n)=\B^n(V)\# A_0$. We note once more that any basis for $P(0)$ is a free basis for $P$ as a $\B(V)$-module. This implies $P$ is generated in degree zero as an $A$-module. 
    
    Now we notice the action of $A_0$ on $P$ preserves the grading. That is for all $n$,
    \begin{equation*}
        P(n)\cdot A(0)=(P_0\tensor \B^n(V))\cdot (1\# A_0)\subset (P_0\cdot A_0)\tensor (H_0\hact \B^n(V))\subset P_0\tensor \B^n(V)=P(n).
    \end{equation*}
    Condition 1 of Definition \ref{Graded Equiv Ob Def} holds by the computation above and the fact that $\B(V)$ is a graded algebra. Condition 2 follows from the fact that  $\B(V)$ is a Loewy-graded $H$-comodule algebra and because the coaction takes $P_0$ into $H_0\tensor P_0$.
\end{proof}
This partially answers Question \ref{Q Morita Ext} in the following sense:
\begin{corollary}\label{Crossed Morita Ext Cor}
    If $A_0\simeq_{H_0} X$, then $\B(V)\# A_0$ is $H$-equivariant Morita equivalent to a Loewy-graded left $H$-comodule algebra $B$ with $B(0)\cong X$.
\end{corollary}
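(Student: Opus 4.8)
The plan is to realize the desired $B$ as an endomorphism algebra of the equivariant module built in Proposition~\ref{Extend P0 Prop}, and then read off everything from the results already established in this section. First I would use Theorem~\ref{End P Thm}, together with the symmetry of the relation $\simeq_{H_0}$, to produce an object $P_0 \in {}^{H_0}\M_{A_0}$ and an isomorphism of $H_0$-comodule algebras $X \cong \End_{A_0}(P_0)$; in particular $P_0$ is a progenerator of $\M_{A_0}$. Setting $A := \B(V)\# A_0$ and $P := P_0 \tensor \B(V)$, Proposition~\ref{Extend P0 Prop} gives $P \in {}^H\M_A$ and Proposition~\ref{Examp is Graded P prop} says $P$ is a Loewy-graded $A$-module generated in degree zero. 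Then Theorem~\ref{Loew Grade S Thm} applies and tells us that $B := \End_A(P)$ is a Loewy-graded $H$-comodule algebra, while Corollary~\ref{Deg 0 Corollary} identifies $B(0) \cong \End_{A_0}(P(0)) = \End_{A_0}(P_0) \cong X$. So the only thing still to check is the equivariant Morita equivalence $A \simeq_H B$.

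For that, I would argue that $P$ is a progenerator of $\M_A$: then classical Morita theory yields an equivalence ${}_A\M \simeq {}_B\M$, and because $P$ is $H$-equivariant this equivalence is ${}_H\M$-linear --- equivalently, taking $P^* := \Hom_A(P,A) \in {}^H\M_B$ one gets $A \cong \End_B(P^*)$ as $H$-comodule algebras (the coaction being forced by \eqref{EQU End Coact}), so Theorem~\ref{End P Thm} gives $A \simeq_H B$. To see that $P$ is a progenerator, I would note that $A = \B(V)\# A_0$ is free as a left $A_0$-module (of rank $\dim\B(V)$) and that the right $A$-module $P$ is isomorphic to the induced module $P_0 \tensor_{A_0} A$, the comparison map being $p_0 \tensor (t\# a)\mapsto (p_0\tensor 1)\cdot(t\# a)$ with the action of Proposition~\ref{Extend P0 Prop}. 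Since induction $-\tensor_{A_0} A$ along the flat (indeed free) extension $A_0 \hookrightarrow A$ is left adjoint to an exact functor and sends $A_0$ to $A$, it carries the progenerator $P_0$ of $\M_{A_0}$ to a progenerator of $\M_A$.

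I expect the main obstacle to be precisely this identification $P \cong P_0 \tensor_{A_0} A$ as right $A$-modules: verifying that the comparison map is well defined on the balanced tensor product and bijective requires unwinding both the twisted multiplication of $\B(V)\# A_0$ and the $S^{-1}$-twisted action of Proposition~\ref{Extend P0 Prop}, and establishing the freeness of $A$ over $A_0$ (a smash-product computation). Everything after that --- passing from a plain Morita equivalence to an $H$-equivariant one, and keeping track of the Loewy grading --- is handled verbatim by Theorem~\ref{End P Thm}, Theorem~\ref{Loew Grade S Thm} and Corollary~\ref{Deg 0 Corollary}, so no further effort should be needed there.
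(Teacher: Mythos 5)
Your proposal is correct and follows essentially the same route as the paper: take $P_0$ with $X\cong\End_{A_0}(P_0)$, form $P=P_0\tensor\B(V)$ via Proposition~\ref{Extend P0 Prop}, set $B=\End_{\B(V)\# A_0}(P)$, and cite Proposition~\ref{Examp is Graded P prop}, Theorem~\ref{Loew Grade S Thm} and Corollary~\ref{Deg 0 Corollary}. The extra work you anticipate (identifying $P\cong P_0\tensor_{A_0}A$ and checking it is a progenerator) is bypassed in the paper, which obtains $A\simeq_H B$ directly from Theorem~\ref{End P Thm} applied to $P\in{}^H\M_A$.
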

\begin{proof}
    Let $P_0\in {}^H\M_{A_0}$ such that $X\cong\End_{A_0}(P_0)$. The construction in Proposition \ref{Extend P0 Prop} realizes an $H$-equivariant equivalence between $\B(V)\# A_0$ with $B:=\End_{\B(V)\# A_0}(P)$. Proposition \ref{Examp is Graded P prop} and Theorem \ref{Loew Grade S Thm} show $B$ is a Loewy-graded $H$-comodule algebra and Corollary \ref{Deg 0 Corollary} implies $B(0)= \End_{B_0}(P_0)\cong X$.
\end{proof}

\section{Loewy-graded AM-exact $H$-comodule algebras}

In this section we prove Theorem \ref{Main Thm s1} and discuss the structure of Loewy-graded comodule algebras. Let $H_0$ be a semisimple Hopf algebra and $\B(V)\in \yd{H_0}$ a finite-dimensional Nichols algebra. Let $H:=\B(V)\# H_0$ and $A$ a Loewy-graded AM-exact $H$-comodule algebra.

\begin{lemma}\label{Kappa Lemma}
    The map $\kappa:=(\id_H\tensor \pi_{A(0)})\lambda:A\to H\tensor A(0)$ is an injective morphism of $H$-comodule algebras.
\end{lemma}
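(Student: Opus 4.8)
The plan is to verify that $\kappa$ is simultaneously a morphism of $H$-comodules and of algebras, and then deduce injectivity from the Loewy-grading structure. For the comodule property, I would note that $\kappa = (\id_H \tensor \pi_{A(0)}) \circ \lambda$ is a composition of the coaction $\lambda$ (a comodule morphism from $A$ to $H \tensor A$ with the coaction on $H \tensor A$ being $\Delta \tensor \id_A$) with $\id_H \tensor \pi_{A(0)}$. Since $A(0) = A_0 = A^{\co H}$-part in degree zero and the projection $\pi_{A(0)}:A \to A(0)$ need not itself be a comodule map onto all of $A$, the cleaner statement is that $H \tensor A(0)$ carries the coaction $\Delta \tensor \id_{A(0)}$, and coassociativity of $\lambda$ together with the fact that $\lambda(A(0)) \subset H_0 \tensor A(0) \subset H \tensor A(0)$ forces $(\id \tensor \id \tensor \pi_{A(0)})(\Delta \tensor \id_A)\lambda = (\Delta \tensor \id_{A(0)})\kappa$. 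This is a short diagram chase using $\lambda(A(n)) \subset \bigoplus_{i=0}^n H(i) \tensor A(n-i)$, so that the $A(0)$-component of $\lambda(a)$ only sees the ``top'' part of $\Delta$.

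For the algebra property, I would use that $\lambda$ is an algebra map and that the multiplication on $H \tensor A(0)$ is the one making it a comodule algebra — but here I must be slightly careful, since $A(0)$ by itself is an $H_0$-comodule algebra, not an $H$-comodule algebra. The right framework: $\kappa$ lands in $H \tensor A(0)$, and the relevant algebra structure on the target is the smash-type product coming from viewing $A(0)$ with its $H_0$-coaction inside $H$; concretely, since $A$ is Loewy-graded, the associated graded multiplication respects the filtration, and projecting to $A(0)$ both on the left and the right of a product $\lambda(ab) = \lambda(a)\lambda(b)$ and comparing degree-zero $A$-components yields $\kappa(ab) = \kappa(a)\kappa(b)$ in the appropriate product. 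The key mechanical point is that for $a \in A(m)$, $b \in A(k)$, the $A(0)$-component of $\lambda(ab)$ picks out exactly the product of the $H(m)\tensor A(0)$-component of $\lambda(a)$ with the $H(k) \tensor A(0)$-component of $\lambda(b)$, because all other cross terms have strictly positive $A$-degree.

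Finally, for injectivity: suppose $\kappa(a) = 0$. Write $a = \sum_n a_n$ with $a_n \in A(n)$; it suffices to show each homogeneous $a \in A(n)$ with $\kappa(a)=0$ is zero. For such $a$, $\lambda(a) \in \bigoplus_{i=0}^n H(i) \tensor A(n-i)$, and $\kappa(a) = 0$ says the $H \tensor A(0)$-component — i.e. the $H(n) \tensor A(0)$ summand — vanishes. But I claim this summand, composed further with a suitable projection, recovers $a$ up to the lower-filtration terms: more precisely, using the coaction axiom $(\epsilon \tensor \id)\lambda = \id$ together with the grading, the component of $\lambda(a)$ in $H(n) \tensor A(0)$ determines the image of $a$ in $A(n) = A_n/A_{n-1}$. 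Since $a$ is homogeneous of degree $n$, vanishing of this image forces $a \in A_{n-1}$, hence $a = 0$. I expect the injectivity step to be the main obstacle, since it requires carefully identifying the $H(n)\tensor A(0)$-component of $\lambda$ on $A(n)$ with the canonical isomorphism $A(n) \hookrightarrow$ (something built from $H(n)$ and $A(0)$) — essentially the statement that a Loewy-graded comodule algebra is ``co-generated'' in degree zero over the Nichols-algebra part. I would likely isolate this as the crux: that the degree-$n$ piece of the coaction, landing in $H(n) \tensor A(0)$ after projection, is injective on $A(n)$, which ultimately rests on $\B(V)$ acting freely (Theorem \ref{4.2iii_SKR}) and the structure $H(n) = \B^n(V) \# H_0$.
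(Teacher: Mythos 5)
Your proposal is correct, but it takes a genuinely different route from the paper. The paper disposes of the whole lemma in two sentences: it cites Lemma 5.1 of \cite{SKR} for the fact that $\kappa$ is a morphism of $H$-comodule algebras and that $\ker(\kappa)$ is an $H$-costable two-sided ideal, and then kills the kernel by right $H$-simplicity (AM-exactness). You instead verify everything by hand from the grading, and in particular your injectivity argument never uses AM-exactness: it works for an arbitrary Loewy-graded comodule algebra, which is a (mildly) stronger statement. One correction to your own assessment: the step you flag as ``the main obstacle'' is actually immediate and does not rest on freeness over $\B(V)$ or on the identification $H(n)=\B^n(V)\# H_0$. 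Since $\kappa$ is graded, it suffices to treat homogeneous $a\in A(n)$; if $\kappa(a)=0$ then $\lambda(a)\in\bigoplus_{i=0}^{n-1}H(i)\tensor A(n-i)\subset H_{n-1}\tensor A$, so $a\in\lambda^{-1}(H_{n-1}\tensor A)=A_{n-1}$ by the very definition of the Loewy filtration, and condition (2) of Definition \ref{Graded Equiv Ob Def} gives $A(n)\cap A_{n-1}=0$. (Your appeal to $(\epsilon\tensor\id)\lambda=\id$ is misplaced here --- the counit axiom controls the $H(0)\tensor A(n)$ component, not the $H(n)\tensor A(0)$ one --- but the conclusion you draw is right for the reason just given.) Your algebra-morphism computation is also sound: in $\lambda(a)\lambda(b)$ the only term landing in $H\tensor A(0)$ is the product of the top-degree components, since the graded multiplication keeps all cross terms in strictly positive $A$-degree. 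The trade-off between the two proofs is citation versus computation: the paper's argument is shorter and generalizes to non-graded AM-exact comodule algebras via Skryabin, while yours isolates exactly which hypothesis (the Loewy grading alone) forces injectivity.
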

\begin{proof}
    Lemma 5.1 in \cite{SKR} proves $\kappa$ is an $H$-comodule algebra morphism. This lemma also tells us $\ker(\kappa)$ is an $H$-costable (two sided) ideal of $A$. Since $A$ is AM-exact, $A$ contains no (non-trivial) $H$-costable right ideals so $\ker(\kappa)=0$.
\end{proof}
We also notice that the morphism $\kappa$ is graded. That is $\kappa(A(n))\subset (H\tensor A(0))(n):=H(n)\tensor A(0)$. If $A(0)$ is isomorphic to a subcomodule of $H_0$, then the counit $\epsilon=\epsilon_{H_0}$ defines a linear map $\epsilon_{A(0)}:A(0)\to \kk$ which we extend to $A$ by precomposition with the projection $\pi_{A(0)}:A\to A(0)$.

\begin{proposition}\label{subcomod prop}
    If $A$ is a Loewy-graded AM-exact $H$-comodule algebra and $A(0)$ is isomorphic to a subcomodule of $H(0)$ then $A$ is isomorphic to a subcomodule of $H$.
\end{proposition}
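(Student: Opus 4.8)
The plan is to exhibit an explicit injective morphism of left $H$-comodules $\psi\colon A\to H$: its image is then a left $H$-subcomodule of $H$ isomorphic to $A$, which is exactly the assertion. Fix an injective $H_0$-comodule map $\iota\colon A(0)\hookrightarrow H(0)=H_0$ witnessing the hypothesis, so that the functional $\epsilon_{A(0)}$ introduced above equals $\epsilon_{H_0}\circ\iota$. Set $c:=\epsilon_{A(0)}\circ\pi_{A(0)}\colon A\to\kk$ and define
\[
\psi:=(\id_H\tensor\epsilon_{A(0)})\circ\kappa=(\id_H\tensor c)\circ\lambda\colon A\longrightarrow H,\qquad \psi(a)=c(a_{(0)})\,a_{(-1)},
\]
with $\kappa$ as in Lemma~\ref{Kappa Lemma}. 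I would then establish: (i) $\psi$ is a morphism of left $H$-comodules; (ii) $\psi$ is graded and $\psi|_{A(0)}=\iota$; (iii) $\psi$ is injective.

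Step (i) is formal: for \emph{any} linear functional $c\in A^{*}$ the assignment $a\mapsto c(a_{(0)})\,a_{(-1)}$ is a morphism of left $H$-comodules $A\to H$, which one reads off by applying $\id_H\tensor\id_H\tensor c$ to the coassociativity identity $(\Delta_H\tensor\id_A)\lambda=(\id_H\tensor\lambda)\lambda$. For step (ii), $\psi(A(n))\subseteq H(n)$ because $\lambda(A(n))\subseteq\bigoplus_{i=0}^{n}H(i)\tensor A(n-i)$ while $c$ annihilates $A(m)$ for all $m\geq 1$, so only the $H(n)\tensor A(0)$ summand of $\lambda(a)$ survives. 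On $A(0)$ one has $\pi_{A(0)}=\id$, and $\lambda$ restricts (by Loewy-gradedness) to the $H_0$-coaction $A(0)\to H_0\tensor A(0)$; since $\iota$ is a morphism of $H_0$-comodules, $\Delta_{H_0}(\iota(a))=a_{(-1)}\tensor\iota(a_{(0)})$, and applying $\id_{H_0}\tensor\epsilon_{H_0}$ gives $\iota(a)=\epsilon_{H_0}(\iota(a_{(0)}))\,a_{(-1)}=\psi(a)$. Hence $\psi|_{A(0)}=\iota$, which is injective.

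Step (iii) carries the actual content, and I expect it to be the main obstacle. The point to exploit is that $A(0)$ is the socle of $A$ viewed as a left $H$-comodule: any simple $H$-subcomodule of $A$ is a comodule over a simple subcoalgebra of $H$, which lies in the coradical $H_0$, so it is contained in $\lambda^{-1}(H_0\tensor A)=A_0=A(0)$; conversely, $\lambda(A(0))\subseteq H_0\tensor A(0)$ exhibits $A(0)$ as a comodule over the cosemisimple coalgebra $H_0$, hence semisimple. Granting this, let $N:=\ker\psi$, a left $H$-subcomodule of $A$. If $N\neq 0$, then $N$ has a nonzero socle, and the socle of $N$ is contained in the socle of $A$, i.e.\ in $A(0)$; thus $0\neq N\cap A(0)=\ker(\psi|_{A(0)})$, contradicting step (ii). Therefore $\psi$ is injective and $A\cong\psi(A)$ as left $H$-comodules, proving the proposition. (Only Loewy-gradedness of $A$ and cosemisimplicity of $H_0$ are used here; AM-exactness of $A$ is not needed for this statement, although it is the standing hypothesis of the section.)
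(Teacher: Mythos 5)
Your map $\psi$ is exactly the paper's $\phi=(\id_H\tensor\epsilon_{A(0)}\pi_{A(0)})\lambda$, and steps (i) and (ii) match the paper's verification that $\phi$ is a comodule morphism (the paper does this via a commutative diagram and simply asserts injectivity on $A(0)$ "by assumption," whereas you actually check $\psi|_{A(0)}=\iota$ — a worthwhile addition). Where you genuinely diverge is step (iii): the paper proves injectivity degree by degree, taking $a\in\ker\phi$ of minimal Loewy degree $n$, projecting $(1\tensor\phi)\lambda(a)$ onto $H(n)\tensor H(0)$ to conclude that the component $b_{n,0}=(\pi_n\tensor\pi_{A(0)})\lambda(a_n)=\kappa(a_n)$ vanishes, and then invoking the injectivity of $\kappa$ from Lemma~\ref{Kappa Lemma} (which rests on AM-exactness, i.e.\ the absence of nontrivial $H$-costable ideals) to contradict minimality. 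You instead observe that $A_0=A(0)=\lambda^{-1}(H_0\tensor A)$ is the socle of $A$ as an $H$-comodule, so any nonzero subcomodule — in particular $\ker\psi$ — must meet $A(0)$ nontrivially, contradicting injectivity there. Both arguments are correct; yours is shorter, isolates the one structural fact actually needed (socle $=A(0)$, which follows from Loewy-gradedness and cosemisimplicity of $H_0$), and, as you note, dispenses with AM-exactness entirely, so it proves a slightly more general statement. The paper's version has the minor virtue of being self-contained at the level of graded components and of reusing the already-established Lemma~\ref{Kappa Lemma}, but it buys nothing beyond that.
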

\begin{proof}
    Consider the map $\phi:A\to H$ defined by the composition $\phi:=(\id_H\tensor\epsilon_{A(0)}\pi_{(0)})\lambda$. Notice the squares in the diagram below commute:
\[\begin{tikzcd}
	A && {H\tensor A} && {H\tensor \kk\cong H} \\
	{H\tensor A} && {H\tensor H\tensor A} && {H\tensor H\tensor \kk\cong H\tensor H}
	\arrow["\lambda", from=1-1, to=1-3]
	\arrow["\lambda"', from=1-1, to=2-1]
	\arrow["{\id_H\tensor \epsilon_{A(0)}\pi_{A(0)}}", from=1-3, to=1-5]
	\arrow["{\Delta\tensor \id_A}"', from=1-3, to=2-3]
	\arrow["{\Delta\tensor \id_\kk}", from=1-5, to=2-5]
	\arrow["{\id_H\tensor \lambda}"', from=2-1, to=2-3]
	\arrow["{\id_H\tensor \epsilon_{A(0)}\pi_{A(0)}}"', from=2-3, to=2-5]
\end{tikzcd}\]
    which implies $\Delta\circ \phi=(\id_H\tensor \phi)\lambda$.
    
    Now we will show $\phi$ is injective. By assumption $\phi$ is injective on $A(0)$. We also note $H\cong \bigoplus_{i=1}^n H(i)$ as vector spaces and so we let $\pi_i:H\to H(i)$ to be the linear projection for each degree $i$. Let $a\in\ker(\phi)$ and let $n>0$ be minimal such that $a\in A_n$. Then $a=\sum_{i=0}^n a_i$ with $a_n\neq 0$ and $a_i\in A(i)$ for all $i$. Notice then $\Delta\phi(a)=(1\tensor \phi)\lambda(a)=0$. In particular, the projection of $(1\tensor \phi)\lambda(a)$ on $H(n)\tensor H(0)$ must vanish. It is clear that the projection of $(1\tensor \phi)\lambda(a)$ on $H(n)\tensor H(0)$ is equal to the projection of $(1\tensor \phi)\lambda(a_n)$ on $H(n)\tensor H(0)$. That is if $\phi(a)$ vanishes and $\lambda(a_n)=\sum_{i=1}^n b_{i,n-i}$ where $b_{i,n-i}\in H(i)\tensor A(n-i)$, then
    \begin{equation*}
        (\pi_n\tensor \pi_0)(1\tensor \phi)\lambda(a_n)=0.
    \end{equation*}
    We then see:
    \begin{align*}
        (\pi_n\tensor \pi_0)(1\tensor \phi)\lambda(a_n)&=(1\tensor \pi_0)(1\tensor\phi)(\pi_n\tensor 1)\lambda(a_n)\\
        &=(1\tensor \pi_0)(1\tensor \phi)(b_{n,0}).
    \end{align*}
    Now since $b_{n,0}\in H(n)\tensor A(0)$,  $(1\tensor \phi)(b_{n,0})\in H(n)\tensor H(0)$ which implies
    \begin{equation*}
        0=(\pi_n\tensor \pi_0)(1\tensor \phi)\lambda(a_n)=(1\tensor \phi)(b_{n,0}).
    \end{equation*}
    Since $\phi$ is injective on $A(0)$, this implies $b_{n,0}=0$. This contradicts the minimality of $n$ above. Thus $\ker(\phi)=0$.
\end{proof}
\begin{corollary}\label{CSA Cor}
    If $A$ is a Loewy-graded AM-exact $H$-comodule algebra and $A(0)$ is isomorphic to a coideal subalgebra of $H(0)$ then $A$ is isomorphic to a coideal subalgebra of $A$
\end{corollary}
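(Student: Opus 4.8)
The plan is to upgrade Proposition~\ref{subcomod prop} from a statement about $H$-comodule inclusions to one about $H$-comodule \emph{algebra} inclusions, the extra input being that $A(0)$ is a coideal subalgebra rather than merely a subcomodule. First I would observe that a coideal subalgebra of $H(0)$ is in particular a subcomodule of $H(0)$, so Proposition~\ref{subcomod prop} already supplies an injective $H$-comodule morphism
\[
    \phi=(\id_H\tensor\epsilon_{A(0)}\pi_{A(0)})\lambda\colon A\longrightarrow H,
\]
which is known to be a comodule map with $\ker\phi=0$. It then suffices to show that $\phi$ is multiplicative: once that is established, $\phi(A)$ is an $H$-subcomodule algebra of $H$, hence a left coideal subalgebra of $H$ by the observation following Theorem~\ref{4.2iii_SKR}, and $A\cong\phi(A)$.

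To prove multiplicativity I would factor $\phi$ through the map $\kappa$ of Lemma~\ref{Kappa Lemma}. By that lemma, $\kappa=(\id_H\tensor\pi_{A(0)})\lambda\colon A\to H\tensor A(0)$ is a morphism of $H$-comodule algebras, and by construction $\phi=(\id_H\tensor\epsilon_{A(0)})\circ\kappa$. This is exactly where the coideal-subalgebra hypothesis is used: when $A(0)$ is isomorphic, as an $H_0$-comodule algebra, to a left coideal subalgebra $K\subset H_0$, the map $\epsilon_{A(0)}$ is the restriction $\epsilon_{H_0}|_K$ transported along that isomorphism, hence an algebra homomorphism $A(0)\to\kk$. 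Therefore $\id_H\tensor\epsilon_{A(0)}\colon H\tensor A(0)\to H\tensor\kk\cong H$ is an algebra homomorphism, and $\phi$, being a composite of algebra homomorphisms, is one as well.

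Putting these together, $\phi\colon A\to H$ is an injective morphism of $H$-comodule algebras, so $A$ is isomorphic to the left coideal subalgebra $\phi(A)$ of $H$. I do not anticipate a genuine obstacle here: the one point to be careful about is that $\pi_{A(0)}$ is \emph{not} itself an algebra map, so the multiplicativity of $\phi$ cannot be checked naively and must instead be read off from the factorization $\phi=(\id_H\tensor\epsilon_{A(0)})\circ\kappa$, with the hypothesis on $A(0)$ entering precisely to make $\epsilon_{A(0)}$ multiplicative. (It is also worth noting that the conclusion of the corollary as stated contains a typo: ``a coideal subalgebra of $A$'' should read ``a coideal subalgebra of $H$''.)
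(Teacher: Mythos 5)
Your proof is correct and takes essentially the same approach as the paper: the paper's own one-line proof simply observes that $\phi$ is a composition of $\kk$-algebra morphisms, which is precisely the factorization $\phi=(\id_H\tensor\epsilon_{A(0)})\circ\kappa$ you spell out, with injectivity supplied by Proposition~\ref{subcomod prop}. (You are also right that the statement's ``coideal subalgebra of $A$'' is a typo for ``of $H$''.)
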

\begin{proof}
    The isomorphism $\phi$, as in Proposition \ref{subcomod prop}, is the composition of $\kk$-algebra morphisms.
\end{proof}
Proposition \ref{subcomod prop} is a minor generalization of Corollary \ref{CSA Cor}, which we note was proven in \cite{MBP} (Proposition 5.3). Consequently, the above proof is taken almost directly from these results.

Let $A, B$ be Loewy-graded $H$-comodule algebras with $\phi:A(0)\to B(0)$ an isomorphism of $H_0$-comodule algebras. Then the map $1\tensor \phi:H\tensor A(0)\to H\tensor B(0)$ is an isomorphism of $H$-comodule algebras (each equipped with coaction $\Delta\tensor 1$) such that $(1\tensor \phi)\kappa_A=\kappa_B\phi$ on $A(0)$ (where $\kappa_A, \kappa_B$ are isomorphisms from Lemma \ref{Kappa Lemma}). In particular, if $A(0)\cong B(0)$, then we can view $A,B$ as $H$-costable subalgebras of $H\tensor B(0)$ containing $\kappa_B(B(0))=\lambda_B(B(0))$. In particular, $A\cong \kappa_A(A)$ and $B\cong \kappa_B(B)$ so we define $A\cap B$ to be the isomorphism class of $\kappa(A)\cap \kappa(B)$ as a subspace of $H\tensor B(0)$. We notice that the intersection of $H$-subcomodules is a subcomodule and the intersection of subalgebras is a subalgebra. Thus it is clear $A\cap B$ is a Loewy-graded $H$-comodule algebra with $(A\cap B)(0)\cong A(0)\cong B(0)$. Further $A\cap B$ is AM-exact by Theorem \ref{Am Lift Thm}.

From here we notice that there is a largest such AM-exact comodule algebra.
To determine this algebra, we must establish some notation. Let $d=\dim(A(0))$. Since the coaction is defined by $\Delta\tensor 1$ on $H\tensor A(0)$, we have an $H$-comodule isomorphism
\begin{equation*}
    (H\tensor A(0), \Delta\tensor 1)\cong (H^d,\Delta^d) 
\end{equation*}
where $\Delta^d$ is the map restricting to $\Delta$ on each copy of $H$. Similarly, define $\epsilon^d$ as the map restricting to $\epsilon$ on each copy of $H$. We will view $A$ as a subcomodule of $H^d$ via the isomorphism $A\cong \kappa_A(A)\subset H\tensor A(0)\cong H^d$ from Lemma \ref{Kappa Lemma}.

\begin{proposition}\label{Sub Crossed Prop}
    If $A$ is a Loewy-graded AM-exact $H$-comodule algebra then $A$ injects into $\B(V)\# A(0)$.
\end{proposition}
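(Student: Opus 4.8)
The plan is to realize the injection inside the ambient $H$-comodule algebra $M := H\tensor A(0)$, equipped with the coaction $\Delta_H\tensor\id_{A(0)}$ and tensor-product multiplication, by comparing the images of $A$ and of $B := \B(V)\# A(0)$. Since $A$ is Loewy-graded and AM-exact, Theorem \ref{Am Lift Thm} shows $A(0)$ is AM-exact, and a second application of Theorem \ref{Am Lift Thm} (to the Loewy-graded $H$-comodule algebra $B$, whose degree-zero part $1\# A(0)$ is isomorphic to $A(0)$) shows $B$ is AM-exact. Hence Lemma \ref{Kappa Lemma} furnishes injective morphisms of $H$-comodule algebras $\kappa_A\colon A\hookrightarrow M$ and $\kappa_B\colon B\hookrightarrow M$, which are graded with $\kappa_A(A(n)),\,\kappa_B(B(n))\subseteq H(n)\tensor A(0)$, and which both restrict in degree zero to the coaction $\lambda_{A(0)}$ (for $\kappa_A$ because $\lambda_A(A(0))\subseteq H(0)\tensor A(0)$, for $\kappa_B$ because $1\#A(0)\cong A(0)$ as $H$-comodule algebras), so $\kappa_A(A(0)) = \kappa_B(B(0)) = \lambda_{A(0)}(A(0)) =: N_0 \subseteq H(0)\tensor A(0)$. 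It then suffices to prove $\kappa_A(A)\subseteq\kappa_B(B)$ in $M$, for then $\kappa_B^{-1}\circ\kappa_A\colon A\to B = \B(V)\# A(0)$ is the desired injective $H$-comodule algebra morphism.

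The key step is the claim that \emph{every} graded subcomodule $D = \bigoplus_n D(n)$ of $M$ with $D(n)\subseteq H(n)\tensor A(0)$ and $D(0) = N_0$ satisfies $D(n)\subseteq\B^n(V)\tensor N_0$ for all $n$, where $\B^n(V)\tensor N_0$ is viewed inside $H(n)\tensor A(0) \cong \B^n(V)\tensor H_0\tensor A(0)$ with $N_0$ occupying the last two tensor slots. To prove this, fix $y\in D(n)$ and write $y = \sum_j t_j\tensor w_j$ with $\{t_j\}$ a basis of $\B^n(V)$ and $w_j\in H_0\tensor A(0)$. Because $H$ is coradically graded and $D$ is a graded subcomodule, $(\Delta_H\tensor\id)(y)$ lies in $\bigoplus_{i+k=n} H(i)\tensor D(k)$, so its component in $H(n)\tensor H(0)\tensor A(0)$ lies in $H(n)\tensor D(0) = H(n)\tensor N_0$. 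On the other hand, since $\Delta_{\B(V)}(r) = r\tensor 1 + (\text{terms whose right tensor factor has positive degree})$ and the Yetter–Drinfeld coaction on $\B(V)$ is degree-preserving, the only contribution to $\Delta_H(r\# h)$ landing in $H(n)\tensor H(0)$ is $(r\# h_{(1)})\tensor(1\# h_{(2)})$; pushing this through, the $H(n)\tensor H(0)\tensor A(0)$-component of $(\Delta_H\tensor\id)(y)$ equals $\sum_j t_j\tensor(\Delta_{H_0}\tensor\id_{A(0)})(w_j)$. Comparing the two descriptions and using linear independence of the $t_j$ gives $(\Delta_{H_0}\tensor\id)(w_j)\in H_0\tensor N_0$ for each $j$; applying $\epsilon_{H_0}\tensor\id\tensor\id$ then yields $w_j\in N_0$, whence $y\in\B^n(V)\tensor N_0$.

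Applying the claim to $D = \kappa_B(B)$ gives $\kappa_B(B(n))\subseteq\B^n(V)\tensor N_0$; since $\lambda_{A(0)}$ is injective we have $\dim(\B^n(V)\tensor N_0) = \dim\B^n(V)\cdot\dim A(0) = \dim B(n) = \dim\kappa_B(B(n))$, so $\kappa_B(B(n)) = \B^n(V)\tensor N_0$ for every $n$. Applying the claim to $D = \kappa_A(A)$ gives $\kappa_A(A(n))\subseteq\B^n(V)\tensor N_0 = \kappa_B(B(n))$ for every $n$, hence $\kappa_A(A)\subseteq\kappa_B(B)$, and the proof is complete.

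The main obstacle I expect is the bookkeeping in the key claim: one must pin down the degree-$(n,0)$ part of $\Delta_H$ restricted to $H(n) = \B^n(V)\# H_0$ — this uses connectedness and the grading of the Nichols algebra together with the fact that the $H_0$-coaction $\delta_{\B(V)}$ preserves degree — and then keep careful track of which tensor factor each element sits in while unwinding coassociativity. The dimension count that upgrades $\kappa_B(B(n))\subseteq\B^n(V)\tensor N_0$ to equality is a convenient shortcut that avoids writing down the coaction on $\B(V)\# A(0)$ explicitly.
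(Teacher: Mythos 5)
Your proof is correct, and its skeleton coincides with the paper's: both embed $A$ into $H\tensor A(0)$ via $\kappa_A$ and show the image lands in the copy of $\B(V)\tensor \lambda(A(0))$ sitting inside. The mechanism for that containment, however, is genuinely different. The paper picks an $H_0$-comodule complement $D$ of $\lambda(A(0))$ in $(H_0)^d$ (cosemisimplicity of $H_0$), writes $H^d=(\B(V)\tensor A(0))\oplus(\B(V)\tensor D)$, and kills the second summand with the functional $\alpha$ that vanishes on $\B(V)\tensor A(0)$ and restricts to $\epsilon^d$ on $\B(V)\tensor D$: since $\epsilon$ vanishes on $H(n)$ for $n\geq 1$, $\alpha$ annihilates all of $\kappa_A(A)$, and $(1\tensor\alpha)\Delta^d$ then forces $\kappa_A(A)\cap(\B(V)\tensor D)=0$. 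You instead avoid choosing a complement by extracting the $H(n)\tensor H(0)\tensor A(0)$ component of the iterated coaction and using connectedness of $\B(V)$ (only the $r\tensor 1$ term of the braided coproduct contributes) to recover $w_j\in N_0$ via the counit; this costs more bookkeeping with the bosonization coproduct but buys a cleaner endgame, since identifying $\B^n(V)\tensor N_0$ with $\kappa_B(B(n))$ by a dimension count makes $\kappa_B^{-1}\circ\kappa_A$ an algebra morphism for free, whereas the paper must argue separately (and rather tersely) that the multiplications on $A$ and $\B(V)\# A(0)$ agree on the intersection. Two points worth making explicit in a write-up: your key claim uses $(\Delta\tensor\id)(D(n))\subset\bigoplus_{i+k=n}H(i)\tensor D(k)$, which for $D=\kappa_A(A)$ and $D=\kappa_B(B)$ follows because $\kappa$ is a graded comodule morphism and $A$, $\B(V)\# A(0)$ are Loewy-graded; and the AM-exactness of $\B(V)\# A(0)$ (needed to invoke Lemma \ref{Kappa Lemma} for $\kappa_B$) does follow from Theorem \ref{Am Lift Thm} as you say, since its degree-zero Loewy piece is $1\# A(0)\cong A(0)$.
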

\begin{proof}
    We have an isomorphism $H\cong \B(V)\tensor H_0$ of $H$-comodules, where $\B(V),H_0$ are $H$-comodules via $\Delta$. Thus $H^d\cong \B(V)\tensor (H_0)^d$ as comodules. Moreover, $A(0)$ must be contained in $1\tensor (H_0)^d$. By cosemisimplicity, there exists an $H_0$-comodule compliment $D$ such that $A(0)\oplus D\cong (H_0)^d$. This extends to a decomposition of $H$-comodules:
    \begin{equation*}
        H^d = (\B(V)\tensor A(0))\oplus (\B(V)\tensor D).
    \end{equation*}
    Consider the linear map $\alpha:H^d\to \kk$ vanishing on $\B(V)\tensor A(0)$ and restricting to $\epsilon^d$ on $\B(V)\tensor D$. We notice that for $n\geq 1$,  $\epsilon$ vanishes on $H(n)$. Thus $\alpha(A(n))=0$ for $n>0$. By construction, $\alpha (A(0))=0$. Thus $\alpha(A)=0$. The map  $(1\tensor \alpha)\Delta^d$ restricts to the identity on $\B(V)\tensor D$ but
    \begin{equation*}
        (1\tensor \alpha)\Delta^d(A)\subset (1\tensor \alpha)(H\tensor A)=0.
    \end{equation*}
    Thus $A\cap (\B(V)\tensor D)=0$ and so,  
    \begin{equation*}
        A\hookrightarrow \B(V)\tensor A(0)\subset H^d
    \end{equation*}
    as an $H$-comodule. Thus there is an injection of $H$-comodules $A\hookrightarrow \B(V)\# A(0)$. Since the multiplication on $A$ and $\B(V)\# A(0)$ agree in degree zero, they agree on their intersection by Lemma \ref{Kappa Lemma}. Thus $A$ is isomorphic to an $H$-costable subalgebra of $\B(V)\# A(0)$.
\end{proof}

We now prove the main result of this paper, which implies Theorem \ref{Main Thm s1}. We also note two immediate corollaries.
\begin{theorem}\label{Main Thm s1 Sec 4}
    If $A$ is a Loewy-graded, AM-exact $H$-comodule algebra and $A(0)\simeq_{H_0} X$ is $H_0$-equivariant Morita equivalence, then there exists a Loewy-graded, AM-exact $H$-comodule algebra $B$ such that $A\simeq_H B$ and $B(0)\cong X$ as $H_0$-comodule algebras.
\end{theorem}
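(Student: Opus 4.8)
The plan is to realize $B$ as $\End_A(P)$ for a suitable Loewy-graded right $A$-module $P$, obtained by inducing a degree-zero Morita bimodule up through the Loewy grading of $A$. First I record the degree-zero data: by Theorem \ref{Am Lift Thm}, $A(0)$ is an AM-exact $H_0$-comodule algebra, which is moreover semisimple over $\kk$ since ${}_{H_0}\M$ is fusion; and applying Theorem \ref{End P Thm} over $H_0$ (and using that equivariant Morita equivalence is symmetric), the equivalence $A(0)\simeq_{H_0}X$ is realized by a progenerator $P_0\in{}^{H_0}\M_{A(0)}$ with $X\cong\End_{A(0)}(P_0)$ as $H_0$-comodule algebras.

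Next I build the module. Writing $A_0=A(0)$ as a subalgebra of $A$, set $P:=P_0\tensor_{A_0}A$, a right $A$-module via multiplication on the second factor and with coaction $\lambda_P(p\tensor_{A_0}a):=p_{(-1)}a_{(-1)}\tensor(p_{(0)}\tensor_{A_0}a_{(0)})$. The check that $\lambda_P$ is well defined on $\tensor_{A_0}$ and makes $P$ an object of ${}^H\M_A$ is direct, parallel to Proposition \ref{Extend P0 Prop}, which is the case $A=\B(V)\#A_0$ (indeed, for $A=\B(V)\#A_0$ one has $P_0\tensor_{A_0}A\cong P_0\tensor\B(V)$, recovering the module of that proposition). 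The algebra grading $A=\bigoplus_nA(n)$ induces $P=\bigoplus_nP(n)$ with $P(n):=P_0\tensor_{A_0}A(n)$; since $1\in A_0$ we get $P(0)\cong P_0$ and $P(0)\cdot A=P$, so $P$ is generated in degree zero, while $P(n)A(k)\subseteq P(n+k)$ is immediate. Because $P_0$ is a progenerator over the semisimple algebra $A_0$ and base change along $A_0\hookrightarrow A$ preserves progenerators, $P$ is a progenerator over $A$.

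The substantive point is that this grading is the Loewy filtration of $P$, namely $\lambda_P^{-1}(H_m\tensor P)=\bigoplus_{k\le m}P(k)$. The inclusion $\supseteq$ follows from $\lambda_P(P(n))\subseteq\bigoplus_{i\le n}H(i)\tensor P(n-i)$, since $H$ is coradically graded. For $\subseteq$: writing $x=\sum_nx_n$ with $x_n\in P(n)$, the $H(n)\tensor P(0)$-component of $\lambda_P(x)$ equals $\kappa_P(x_n)$, where $\kappa_P:=(\id_H\tensor\pi_{P(0)})\lambda_P\colon P\to H\tensor P(0)$ is the module analogue of the map of Lemma \ref{Kappa Lemma}; so if $\lambda_P(x)\in H_m\tensor P$ then $\kappa_P(x_n)=0$ for all $n>m$. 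Now $\kappa_P$ is injective: up to the natural isomorphism $P_0\tensor_{A_0}(H\tensor A_0)\cong H\tensor P_0$ it is the base change $\id_{P_0}\tensor_{A_0}\kappa_A$ of the injective comodule-algebra map $\kappa_A$ of Lemma \ref{Kappa Lemma} (which is left $A_0$-linear, being an algebra map), and $\kappa_A$ is a split monomorphism of left $A_0$-modules because $A_0$ is semisimple, so the base change stays injective. Hence $x_n=0$ for $n>m$, and $P$ is a Loewy-graded $A$-module generated in degree zero.

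Granting this, the conclusion follows: $B:=\End_A(P)$ is a Loewy-graded $H$-comodule algebra by Theorem \ref{Loew Grade S Thm}; $B(0)\cong\End_{A_0}(P(0))=\End_{A_0}(P_0)\cong X$ as $H_0$-comodule algebras by Corollary \ref{Deg 0 Corollary}; $A\simeq_H B$ by Theorem \ref{End P Thm}, since $P$ is a progenerator; and $B$ is AM-exact by Theorem \ref{Am Lift Thm}, as $B(0)\cong X$ is AM-exact (this being the relevant case, $X$ drawn from the AM-exact representatives of Theorem \ref{Main Thm s1}). One could instead embed $A\hookrightarrow\B(V)\#A(0)$ by Proposition \ref{Sub Crossed Prop} and restrict the module of Proposition \ref{Extend P0 Prop} to $A$, but then one must separately check the restriction remains a progenerator over $A$, which the construction $P=P_0\tensor_{A_0}A$ sidesteps. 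I expect the main obstacle to be exactly the Loewy-gradedness of $P$ — that the evident grading $P(n)=P_0\tensor_{A_0}A(n)$ computes the Loewy filtration — which hinges on the injectivity of $\kappa_P$ and hence on combining the AM-exactness of $A$ (Lemma \ref{Kappa Lemma}) with the semisimplicity of $A(0)$; a secondary point, handled within Corollary \ref{Deg 0 Corollary}, is that the comodule structure \eqref{EQU End Coact} on $\End_A(P)$ restricts in degree zero to that of $\End_{A_0}(P_0)$, so that $B(0)\cong X$ is an isomorphism of $H_0$-comodule algebras.
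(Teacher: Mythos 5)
Your proof is correct, and its skeleton --- produce a Loewy-graded $P\in{}^H\M_A$ generated in degree zero with $P(0)\cong P_0$, then invoke Theorem \ref{Loew Grade S Thm}, Corollary \ref{Deg 0 Corollary} and Theorem \ref{End P Thm} --- is the same as the paper's. Where you genuinely diverge is in how $P$ is built and how its grading is shown to compute the Loewy filtration. The paper first embeds $A$ into $\B(V)\#A(0)$ via Proposition \ref{Sub Crossed Prop}, takes the explicit module $P_0\tensor\B(V)$ of Proposition \ref{Extend P0 Prop} over the larger algebra (through Corollary \ref{Crossed Morita Ext Cor}), and restricts to the $A$-submodule $P_A=P(0)\cdot A$ with grading $P_A(n)=P(0)\cdot A(n)$, so that Loewy-gradedness is inherited from the ambient module of Proposition \ref{Examp is Graded P prop} (a point the paper states rather tersely). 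Your $P_0\tensor_{A_0}A$ is canonically isomorphic to that submodule, but you construct it intrinsically and thereby bypass Proposition \ref{Sub Crossed Prop} and Corollary \ref{Crossed Morita Ext Cor} entirely; the price is that you must prove Loewy-gradedness from scratch, which you do by reducing the injectivity of $\kappa_P$ to that of $\id_{P_0}\tensor_{A_0}\kappa_A$, using Lemma \ref{Kappa Lemma} together with the fact that $\kappa_A$ splits as a map of modules over the semisimple algebra $A(0)$. This is a clean alternative: it isolates exactly where the AM-exactness of $A$ enters, supplies the detail the paper elides, and does not depend on the bosonization picture beyond what Lemma \ref{Kappa Lemma} already uses. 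One caveat you share with the paper: as literally stated the theorem requires $X$ to be AM-exact (otherwise Theorem \ref{Am Lift Thm} rules out an AM-exact $B$ with $B(0)\cong X$); you flag this correctly by restricting to AM-exact representatives.
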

\begin{proof}
    First we notice by Proposition \ref{Sub Crossed Prop}, $A\subset \B(V)\# A(0)$. Corollary \ref{Crossed Morita Ext Cor} implies $\B(V)\# A(0)$ is $H$-equivariant Morita equivalent to a Loewy-graded $H$-comodule algebra $B$ with $B(0)\cong X$. In particular, there exists a Loewy-graded $\B(V)\#A(0)$-module $P$ such that 
    $\B(V)\# A(0)\simeq_H B=\End_{\B(V)\# A(0)}(P)$ and $B(0)\cong \End_{A(0)}(P(0))\cong X$.
    
    We notice $P$ forgets to an object in ${}^H\M_A$. Take $P_A$ to be the $A$-submodule of $P$ generated by $P(0)$. We notice $P$ is $H$-costable because
    \begin{equation*}
        \lambda_P(P_A)=\lambda_P(P(0)\cdot A)=\lambda_P(P(0))\lambda_A(A)\subset (H(0)\tensor P(0))(H\tensor A)\subset H\tensor P_A.
    \end{equation*}
    We set $P_A(n):=P(0)\cdot A(n)$. With this grading, $P_A$ is a Loewy-graded $A$-module. Now, by Theorem 3.3, $B_A:=\End_A(P_A)$ is a Loewy-graded $H$-comodule algebra and the equality $P_A(0)=P(0)$ (of $A(0)$-modules) realizes an isomorphism $B_A(0)\cong B(0)\cong X$ by Corollary \ref{Deg 0 Corollary}. Thus $B_A$ has the desired properties.
\end{proof}

\begin{corollary}\label{Morita Coideal Cor}
    If $A$ is an Loewy-graded AM-exact $H$-comodule algebra such that $A(0)$ is $H_0$-equivariant Morita equivalent to a coideal subalgebra of $H_0$, then $A$ is $H$-equivariant Morita equivalent to a homogeneous coideal subalgebra of $H$.
\end{corollary}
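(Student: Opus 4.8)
The plan is to feed the hypothesis straight into Theorem \ref{Main Thm s1 Sec 4} and then clean up with Corollary \ref{CSA Cor}, the only extra work being to check that the resulting coideal subalgebra can be taken homogeneous. Concretely: by hypothesis there is a left coideal subalgebra $X\subset H_0$ with $A(0)\simeq_{H_0}X$, so Theorem \ref{Main Thm s1 Sec 4} produces a Loewy-graded AM-exact $H$-comodule algebra $B$ with $A\simeq_H B$ and $B(0)\cong X$ as $H_0$-comodule algebras. Since $X$ is a coideal subalgebra of $H(0)=H_0$, the algebra $B$ satisfies the hypothesis of Corollary \ref{CSA Cor}, hence $B$ is isomorphic (as an $H$-comodule algebra) to a left coideal subalgebra of $H$. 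Transitivity of $H$-equivariant Morita equivalence then gives $A\simeq_H B\cong(\text{coideal subalgebra of }H)$.

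The remaining point is homogeneity of that coideal subalgebra. Here I would reuse the explicit embedding $\phi=(\id_H\tensor\epsilon_{A(0)}\pi_{A(0)})\lambda_B$ from the proof of Proposition \ref{subcomod prop}. Because $\kappa_B=(\id_H\tensor\pi_{B(0)})\lambda_B$ is graded, i.e.\ $\kappa_B(B(n))\subset H(n)\tensor B(0)$, and $\phi=(\id_H\tensor\epsilon_{B(0)})\kappa_B$, we get $\phi(B(n))\subset H(n)\tensor\kk\cong H(n)$. Thus $\phi(B)=\bigoplus_{n\geq 0}\phi(B(n))$ is a graded subspace of $H=\bigoplus_{n\geq 0}H(n)$, and since $\phi$ is an injective $\kk$-algebra and $H$-comodule morphism (Corollary \ref{CSA Cor}), $\phi(B)$ is a homogeneous left coideal subalgebra of $H$. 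Combining, $A\simeq_H\phi(B)$, a homogeneous coideal subalgebra of $H$, as claimed.

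I do not expect a genuine obstacle here: all the substance is in Theorem \ref{Main Thm s1 Sec 4} and in Proposition \ref{subcomod prop}/Corollary \ref{CSA Cor}, and the corollary is essentially bookkeeping assembling them. The only step that needs a moment's care is verifying that the coideal subalgebra is homogeneous rather than merely filtered; this is immediate once one observes, as above, that $\kappa_B$ (and hence $\phi$) respects the Loewy grading and that the counit kills $H(n)$ for $n>0$.
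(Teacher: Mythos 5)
Your proposal is correct and follows the same route as the paper: apply Theorem \ref{Main Thm s1 Sec 4} to replace $A$ by a Loewy-graded $B$ with $B(0)$ a coideal subalgebra of $H_0$, then invoke Corollary \ref{CSA Cor}. Your extra verification that $\phi(B)$ is homogeneous (via the gradedness of $\kappa_B$ and the vanishing of $\epsilon$ on $H(n)$ for $n>0$) is a detail the paper leaves implicit, and it is correct.
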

\begin{proof}
    By Theorem \ref{Main Thm s1 Sec 4}, $A$ is $H$-equivariant Morita equivalent to a Loewy-graded $H$-comodule algebra $B$ such that $B(0)$ is a coideal subalgebra. Corollary \ref{CSA Cor} implies $B$ is isomorphic to a coideal subalgebra.
\end{proof}

\begin{corollary}\label{All Coideal cor}
    If in addition, every $H_0$-equivariant Morita equivalence class is represented by a coideal subalgebra of $H_0$ then every Loewy-graded $H$-comodule algebra is $H$-equivariant Morita equivalent to a coideal subalgebra of $H$.
\end{corollary}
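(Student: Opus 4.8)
The plan is to obtain this corollary as an essentially immediate consequence of Corollary~\ref{Morita Coideal Cor}, combined with the lifting statement in Theorem~\ref{Am Lift Thm}. Let $A$ be a Loewy-graded AM-exact $H$-comodule algebra. First I would invoke Theorem~\ref{Am Lift Thm}, applied to the coradical filtration $H_0\subset H$ (which is a Hopf algebra filtration with $H_0$ semisimple, since $H=\B(V)\#H_0$), to conclude that $A_0=A(0)$ is an AM-exact $H_0$-comodule algebra; in particular $A(0)$ is an exact $H_0$-comodule algebra by Theorem~\ref{exact Thm}(1).

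Next, the additional hypothesis says that every $H_0$-equivariant Morita equivalence class of exact $H_0$-comodule algebras contains a left coideal subalgebra of $H_0$. Applying this to the class of $A(0)$ produces a left coideal subalgebra $K\subset H_0$ with $A(0)\simeq_{H_0}K$. This is precisely the hypothesis required by Corollary~\ref{Morita Coideal Cor}, which then gives that $A$ is $H$-equivariant Morita equivalent to a homogeneous coideal subalgebra of $H$. Since $A$ was an arbitrary Loewy-graded AM-exact $H$-comodule algebra, the assertion follows.

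There is no real obstacle here beyond the bookkeeping: one only needs to check that $A(0)$ genuinely falls among the exact $H_0$-comodule algebras whose Morita classes the hypothesis controls (immediate from AM-exactness via Theorem~\ref{exact Thm}), and that the object produced by Corollary~\ref{Morita Coideal Cor} is a bona fide coideal subalgebra of $H$ (already established there via Corollary~\ref{CSA Cor}). Thus the corollary is a short deduction rather than a new argument, and I would write the proof in a single paragraph chaining Theorem~\ref{Am Lift Thm}, the hypothesis, and Corollary~\ref{Morita Coideal Cor}.
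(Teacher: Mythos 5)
Your proposal is correct and follows exactly the route the paper takes: the paper's proof is the one-line observation that the statement follows directly from Corollary~\ref{Morita Coideal Cor}, and your additional bookkeeping (using Theorem~\ref{Am Lift Thm} to see that $A(0)$ is AM-exact and hence covered by the hypothesis) just makes explicit what the paper leaves implicit. No further comment is needed.
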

\begin{proof}
    This follows directly from Corollary \ref{Morita Coideal Cor}.
\end{proof}

It is straightforward to see $H_0=\GA{\Z/n\Z}$ satisfies the hypothesis of Corollary \ref{All Coideal cor}. In the next section, we show that the Kac-Paljutkin Hopf algebra, $\hkp$, also satisfies the hypotheses of Corollary \ref{All Coideal cor}.

\section{Exact ${}_\hkp\M$-module categories}

\begin{definition}[The Kac-Paljutkin Hopf Algebra]
    Let $\kk$ be an algebraically closed field of characteristic zero. Consider the $\kk$-algebra generated by $x,y,z$ satisfying the relations:
    \begin{align*}
        x^2=y^2=1,&& xy=yx,&& yz=zx\\
        xz&=zy,& z^2&=\frac{1}{2}(1+x+y-xy).
    \end{align*}
    This algebra admits a bialgebra structure by setting $x,y$ grouplike, and
    \begin{equation*}
        \Delta(z)=\frac{1}{2}((1+y)z\tensor z+(1-y)z\tensor xz).
    \end{equation*}
    With $\epsilon(z)=1$, $S(z)=z$, we can see this defines a Hopf algebra, which we denote $\hkp$.
\end{definition}
The Hopf algebra $\hkp$ is the unique semismimple Hopf algebra over $\kk$ of dimension $8$ that is not a group algebra, nor dual to a group algebra. Moreover, it is smallest dimensional Hopf algebra with this property (see \cite{Masu}). This fact suggests $\hkp$ is a reasonable place to begin investigating non-pointed Hopf algebras.

We take a basis for $\hkp$ given by $\mathcal{B}=\set{1,x,y,xy,z,zx,zy,zxy}$. We notice that the subalgebra generated by $x$ and $y$ forms a Hopf subalgebra isomorphic to $\kK$ where $K$ is the Klein four group. For every element $b\in \mathcal{B}$ let $b^*:\hkp\to \kk$ be the linear map vanishing on $\mathcal{B}\backslash\set{b}$ and $b^*(b)=1$.

We note that there are 5 non-isomorphic simple left $\hkp$-comodules: four one dimensional comodules $\kk_g$, with coaction $\lambda(\kk_g)=g\tensor \kk_g$ and one two dimensional simple comodule $X$ with basis $v,w$ and coaction:
\begin{align*}
    \lambda(v)&=\half \Big[(1+y)z\tensor v+(1-y)z\tensor w\Big]\\
    \lambda(w)&=\half \Big[x(1-y)z\tensor v+x(1+y)z\tensor w\Big].
\end{align*}
By self duality, we can see $\hkp$ has five (non-isomorphic) simple representations as well. In particular, there are four one dimensional representations induced by the algebra morphisms $\set{\eta_\zeta}_{\zeta^4=1}$ from $\hkp\to \kk$ defined by $\eta_\zeta(z)=\zeta$. This forces $\eta_\zeta(x)=\eta_\zeta(y)=\zeta^2$ and entirely determines distinct algebra morphisms $\hkp\to \kk$. Let $\kk_\zeta$ be the one dimensional representation associated to $\eta_\zeta$. We should notice that any algebra morphism $\eta:\hkp\to \kk$ restricts to an algebra morphism $\eta|_{\kK}:\kK\to \kk$. We should note however that not all algebra morphisms $\mu:\kK\to \kk$ arise this way. In particular, the algebra morphisms from the group algebra $\kK$ to $\kk$ are of the form $\mu_{a,b}=1^*+ax^*+bx^*+ab(xy)^*$ for $a,b\in\set{\pm 1}$ but $\eta_\zeta|_{\kK}\in\set{\mu_{1,1},\mu_{-1,-1}}$ for all $\zeta$. We should note that this implies the forgetful functor ${}_{\hkp}\M\to {}_{\kK}\M$ is not essentially surjective.

We let $A$ be an arbitrary left $\hkp$-comodule. Then by cosemisimplicity, $A\cong \paren{\bigoplus_{g\in K} \kk_g^{n_g}}\oplus X^{n_2}$ for some non-negative integers $n_1, n_x, n_y, n_{xy}, n_2$. Thus we can take a basis for $A$ given by $\set{a_g^i}_{g\in K, 1\leq i\leq n_g}\cup\set{v_i,w_i}_{1\leq i\leq n_2}$ with
\begin{align*}
    \lambda(a_g^i)&=g\tensor a_g^i\\
    \lambda(v_i)&=\half \Big[(1+y)z\tensor v_i+(1-y)z\tensor w_i\Big]\\
    \lambda(w_i)&=\half \Big[x(1-y)z\tensor v_i+x(1+y)z\tensor w_i\Big].
\end{align*}
We set $A_K$ to be the sum of the one dimensional $\hkp$-subcomodules of $A$ and set $A_2$ be the sum of the simple $2$-dimensional comodules. Equivalently, $A_K:=\lambda^{-1}(\kK\tensor A)$ and $A_2:=\lambda^{-1}(z\kK\tensor A)$. Let $V_2$ denote the span of the $v_i$ and let $W_2$ denote the span of the $w_i$. It is clear that $A_2=V_2\oplus W_2$. Let $\tau:A_2\to A_2$ be the linear map induced by the map that transposes $w_i$ and $v_i$ for all $i$. Notice with this map:
\begin{align*}
    \lambda(v_i)&=\half \Big[(1+y)z\tensor v_i+(1-y)z\tensor \tau(v_i)\Big]\\
    \lambda(w_i)&=\half \Big[x(1-y)z\tensor \tau(w_i)+x(1+y)z\tensor w_i\Big].
\end{align*}

From here, we can describe the isomorphism type of $A_K$ when $A$ is exact. For each groulike element $g\in K$, let $\GA{g}$ denote the coideal subalgebra of $\hkp$ generated by $g$. We can see $\psi:K\times K\to \kk$ defined by 
\begin{equation}\label{eq psi cocy}
    \psi(x^iy^j,x^ky^\ell)=(-1)^{jk}
\end{equation}
defines a cocycle $\psi\in Z^2(\kK,\kk)$. Moreover, the class of $\psi$ is the only non-trivial cohomology class.
\begin{proposition}\label{AK Struct Prop}
    If $(A,\lambda)$ is an AM-exact $\hkp$-comodule algebra then $A_K$ is isomorphic to one of $\kk,\GA{g}$ (for some $g\in K$), $\kk[K], \kkpsiK$ as $\hkp$-comodule algebras.
\end{proposition}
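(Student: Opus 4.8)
The plan is to recognize $A_K$ as an AM-exact $\kK$-comodule algebra and then to classify all AM-exact $\kK$-comodule algebras up to isomorphism; since $K\cong(\Z/2)^2$ and $\kk$ is algebraically closed of characteristic zero, the classification amounts to the cohomology of $(\Z/2)^2$. I first record the structure of $A$. The grouplikes of $\hkp$ are exactly the elements of $K$, so every one-dimensional $\hkp$-subcomodule coacts through an element of $K$; hence $A_K=\lambda^{-1}(\kK\tensor A)$ and, with $A_2=\lambda^{-1}(z\kK\tensor A)$ as above, $A=A_K\oplus A_2$. As $\kK$ is both a subalgebra and a subcoalgebra of $\hkp$, the space $A_K$ is a $\kK$-comodule subalgebra of $A$; being a $\kK$-comodule it is $K$-graded, $A_K=\bigoplus_{g\in K}(A_K)_g$, with $(A_K)_g$ the $g$-isotypic component. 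From $xz=zy$ and $yz=zx$ we get $\kK z=z\kK$, hence $\kK\cdot z\kK=z\kK$, so $A_K\cdot A_2\subset A_2$ and $A_2\cdot A_K\subset A_2$. Finally $(A_K)_{1}=A^{\co\hkp}=\kk$, the first equality because the identity-isotypic component of a cosemisimple comodule is its space of coinvariants, the second because $A$ has trivial coinvariants; so $A_K$ has trivial coinvariants as a $\kK$-comodule algebra.

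Next I prove $A_K$ is right $\kK$-simple. Let $J\subsetneq A_K$ be a proper $\kK$-costable right ideal; as $\kK$-costable subspaces of the $K$-graded space $A_K$ are exactly the $K$-graded ones, $J=\bigoplus_g J_g$ with $J_g\subset(A_K)_g$, so $\lambda(J)\subset\kK\tensor J$. Put $I:=JA$, a right ideal of $A$. Then $\lambda(I)=\lambda(J)\lambda(A)\subset(\kK\tensor J)(\hkp\tensor A)\subset\hkp\tensor I$, so $I$ is $\hkp$-costable. Moreover $I=JA_K+JA_2$ with $JA_K\subset J$ and $JA_2\subset A_K A_2\subset A_2$, hence $I\subset J\oplus A_2$; since $1\in A_K\setminus J$ and $A_K\cap A_2=0$, we get $1\notin I$, so $I$ is a proper $\hkp$-costable right ideal of $A$, contradicting the right $\hkp$-simplicity of $A$. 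Therefore $A_K$ is AM-exact as a $\kK$-comodule algebra.

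It remains to classify the AM-exact $\kK$-comodule algebras. Let $B=\bigoplus_{g\in K}B_g$ be one, so $B_1=\kk 1$ and $B$ has no proper $K$-graded right ideal. If $B_g\neq 0$ and $0\neq b\in B_g$, then $bB$ is a nonzero $K$-graded right ideal, hence $bB=B$; taking the degree-$1$ part of an expression for $1\in bB$ gives $c\in B_{g^{-1}}$ with $bc=1$, and finite-dimensionality forces $b$ to be a unit with $b^{-1}\in B_{g^{-1}}$. Thus the support $L:=\set{g\in K:B_g\neq 0}$ is a subgroup of $K$, each $B_g$ with $g\in L$ is one-dimensional (any two nonzero elements have ratio in $B_1=\kk 1$), and unit generators $u_g$ ($g\in L$, $u_1=1$) present $B$ as a twisted group algebra $\kk_\psi[L]$ for some $\psi\in Z^2(L,\kk^{*})$, with isomorphism type depending only on $L$ and $[\psi]\in H^2(L,\kk^{*})$. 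Since the subgroups of $K\cong(\Z/2)^2$ are the trivial one, the three of order two, and $K$ itself, and since $H^2(\Z/2,\kk^{*})=0$ while $H^2((\Z/2)^2,\kk^{*})=\Z/2$ with $[\psi]$ its unique nontrivial class, the possible isomorphism types are precisely $\kk$, $\GA{g}$ for $g\in K$, $\kk[K]$, and $\kkpsiK$. Applying this to $B=A_K$ completes the proof.

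The step I expect to need the most care is the right $\kK$-simplicity of $A_K$: the essential point is that the coalgebra splitting $\hkp=\kK\oplus z\kK$ forces the induced right ideal $JA$ into $J\oplus A_2$, which is exactly what keeps it proper; the remainder is the standard fact that a $K$-graded algebra with no proper graded right ideals and trivial degree-$1$ part is a twisted group algebra of a subgroup, plus the classical cohomology of $(\Z/2)^2$.
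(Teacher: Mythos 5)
Your proposal is correct, and it reaches the classification by a somewhat different route than the paper. The paper works entirely inside $A$: it first observes $n_1=1$, then shows that if some isotypic component $\kk_g^{n_g}$ had dimension $>1$, or if a product $e_ge_h$ of nonzero homogeneous elements vanished, one would obtain a zero divisor $a$ whose right ideal $aA$ is visibly $\hkp$-costable (because $\lambda(a)=g\tensor a$), contradicting right $\hkp$-simplicity of $A$; the support-is-a-subgroup and cocycle statements then follow, and the cohomology of $K$ finishes the argument. You instead prove a transfer lemma -- that $A_K$ is itself an AM-exact $\kK$-comodule algebra, using the coalgebra splitting $\hkp=\kK\oplus z\kK$ to show that the ideal $JA$ induced from a costable right ideal $J\subsetneq A_K$ stays inside $J\oplus A_2$ and hence remains proper -- and then invoke the general classification of AM-exact comodule algebras over a group algebra as twisted group algebras of subgroups. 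Your version buys a reusable statement (AM-exactness descends to $A_K$, which the paper only gets implicitly) and cleanly isolates the purely group-cohomological part; the paper's version is shorter because it never needs the splitting argument, only that homogeneous elements generate costable ideals. Two small points of hygiene: in the simplicity step you should say ``nonzero proper'' ideal $J$ (the contradiction with $\hkp$-simplicity requires $I=JA\neq 0$, which holds since $J\subset JA$), and in the final list $\GA{g}$ should be read with $g\neq 1$ since $g=1$ gives $\kk$. Neither affects the validity of the argument.
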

\begin{proof}
    Notice $A^{co\lambda}=V_1^{n_1}$. This implies $n_1=1$. Suppose there exists $g$ such that $n_g>1$. Then multiplication defines a linear map $m:(\kk_g^{n_g})^{\tensor 2}\to \kk_1$. Since $n_g>2$, this map has a kernel. Thus there exists non-zero $a,b\in \kk_g^{n_g}$ such that $ab=0$. Notice then that $a$ is a zero divisor and as such generates a non-trivial ideal. We now notice that this ideal is costable:
    \begin{equation*}
        \lambda(aA)\subset (g\tensor a)(\hkp\tensor A)\subset \hkp\tensor aA.
    \end{equation*}
    This is a contradiction with the right $\hkp$-simplicity of $A$. This implies $n_g$ is at most $1$ for all $g\in\set{x,y,xy}$. Now let $e_g\in \kk_g, e_h\in \kk_h$ be non-zero such that $e_ge_h=0$. Then the right ideal generated by $e_g$ is non-trivial, but by the argument above, $e_gA$ is an $\hkp$-costable right ideal. This is a contradiction so $e_ge_h\neq 0$ for all grouplike elements $g,h$ such that $n_gn_h\neq 0$. 
    
    Notice this implies $A_K\cong \bigoplus_{g\in \Gamma} \kk_g$ as $\hkp$-comodules where $\Gamma\subset K$ is a subgroup. Moreover there exists a map $\psi:\Gamma\times \Gamma\to \kk^\times$ satisfying $e_ge_h=\psi(g,h)e_{gh}$ for all $g,h\in \Gamma$. For $A$ to be associative, $\psi$ is necessarily a group $2$-cocycle. The isomorphism class is independent of the cocycle representative. Since the only subgroup that supports a non-trivial cocycle is $K$ and there is one cohomology class of cocycles, we will take map in \eqref{eq psi cocy}. This shows the above list is exhaustive.
\end{proof}
Proposition \ref{AK Struct Prop} proves $\dim(\lambda^{-1}(g\tensor A)
)\leq 1$ for all $g$. Now that we know the possible isomorphism classes of $A_K$, we can investigate how the presence of two dimensional subcomodules impacts the possible choices of $A_K$. We begin with a lemma that provides a method to detect when an element in $A_2$ is contained in $V_2$ (or $W_2$).

\begin{lemma}\label{V detect lemma}
    Let $v\in A_2$. Then
    \begin{enumerate}
        \item $v\in V_2$ if and only if $((xz)^*\tensor 1)\lambda(v)=0$, and
        \item $v\in W_2$ if and only if $(z^*\tensor 1)\lambda(v)=0$.
    \end{enumerate} 
\end{lemma}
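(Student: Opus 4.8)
The plan is to compute directly how the functionals $z^*$ and $(xz)^*$ act on the coaction of a general element of $A_2$, using the explicit description of $\lambda$ on the basis $\{v_i, w_i\}$. Write an arbitrary $v \in A_2$ as $v = \sum_i \alpha_i v_i + \sum_i \beta_i w_i$ for scalars $\alpha_i, \beta_i \in \kk$. From the formulas
\[
    \lambda(v_i) = \tfrac{1}{2}\bigl[(1+y)z \tensor v_i + (1-y)z \tensor w_i\bigr], \qquad
    \lambda(w_i) = \tfrac{1}{2}\bigl[x(1-y)z \tensor v_i + x(1+y)z \tensor w_i\bigr],
\]
I would first expand each first-tensor-factor in the basis $\mathcal{B}$: since $x, y$ are grouplike we have $(1+y)z = z + zy$, $(1-y)z = z - zy$, $x(1-y)z = xz - xyz$, and $x(1+y)z = xz + xyz$. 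So the coefficient of $z$ in $\lambda(v)$ (as an element of $\hkp \tensor A_2$) is $\tfrac{1}{2}\sum_i \alpha_i v_i$, coming only from the $\lambda(v_i)$ terms, while the coefficient of $xz$ is $\tfrac{1}{2}\sum_i \beta_i v_i$, coming only from the $\lambda(w_i)$ terms. Here I am using that $\{1, x, y, xy, z, zx, zy, zxy\}$ is a basis, so the monomials $z$, $zy$, $xz = zxy$... — one must be careful: using the relation $xz = zy$ and $yz = zx$, the element $zy$ might coincide with $xz$. I need to check this.

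The main subtlety — and I expect this to be the only real obstacle — is bookkeeping the relations $xz = zy$, $yz = zx$ so that the eight products $\{z, zx, zy, zxy\} \cdot$ (left multiplications by $1, x$) are correctly identified with the basis $\mathcal{B}$. Concretely, $x \cdot z = zy$ and $x \cdot zy = z y \cdot$... wait, $xzy = (xz)y = (zy)y = z y^2 = z$; and $x \cdot zx = (xz)x = zyx = zxy$; $x \cdot zxy = z$... let me just resolve: left-multiplication by $x$ permutes $\{z, zx, zy, zxy\}$ by $z \leftrightarrow zy$ and $zx \leftrightarrow zxy$. Thus in $\lambda(w_i) = \tfrac12[(xz - xyz)\tensor v_i + (xz + xyz)\tensor w_i]$, rewriting $xz = zy$ and $xyz = x(yz) = x(zx) = zxy$, the first factors are among $\{zy, zxy\}$ only — they never produce a $z$ or an $xz=z$... no: $zy$ is a distinct basis element from $z$ and from $xz$. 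Since $xz$ as an abstract product equals $zy$ by the relation, I should be consistent and always reduce to the fixed basis $\mathcal{B}$. So "$(xz)^*$" means the functional dual to the basis element which, written in normal form, is $zy$. Once the normal forms are fixed, the computation shows: $(z^* \tensor 1)\lambda(v) = \tfrac12 \sum_i \alpha_i v_i$ and $((xz)^* \tensor 1)\lambda(v) = (zy)^* $ applied, picking out $\tfrac12 \sum_i \beta_i v_i$ — or possibly with $v_i, w_i$ swapped depending on normal-form conventions; the key structural point is that one functional reads off the $V_2$-component and the other reads off the $W_2$-component.

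Granting that identification, the lemma follows immediately: $(z^* \tensor 1)\lambda(v) = \tfrac12 \sum_i \alpha_i v_i = 0$ iff all $\alpha_i = 0$ iff $v \in W_2$, which gives (2); and $((xz)^* \tensor 1)\lambda(v) = \tfrac12 \sum_i \beta_i w_i = 0$ (using $zy = xz$ picks up the $w$-part of $\lambda(v_i)$ and the $v$-part of... ) — in any case exactly one of the two families of coefficients, so $v \in V_2$ iff that expression vanishes, giving (1). So the proof is: (i) write $v$ in the basis; (ii) carefully put $\lambda(v)$ in normal form using $xz = zy$, $yz = zx$; (iii) apply $z^*$ and $(xz)^*$ and read off that they isolate the $W_2$- and $V_2$-coordinates respectively; (iv) conclude. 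I would present steps (ii)–(iii) as a short displayed calculation rather than narrating every monomial. The honest warning to the reader is just that the relation $xz = zy$ must be applied before dualizing, since otherwise "$(xz)^*$" is ambiguous.
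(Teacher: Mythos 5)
Your approach is exactly the paper's: expand $v=\sum_i\alpha_iv_i+\beta_iw_i$, reduce the first tensor factors of $\lambda(v)$ to the basis $\mathcal{B}$, and observe that $z^*$ and $(xz)^*=(zy)^*$ each isolate one family of coefficients. The one place your bookkeeping goes wrong is the expansion $(1\pm y)z=z\pm zy$: the relation is $yz=zx$, not $yz=zy$, so the correct normal forms are $(1+y)z=z+zx$, $(1-y)z=z-zx$, and $x(1\mp y)z=zy\mp zxy$. This is not a harmless slip: with your written expansion the first factors of $\lambda(v_i)$ would contain $zy=xz$, so $(xz)^*$ would \emph{not} annihilate $\lambda(v_i)$ and part (1) would fail. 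With the correct reduction, the first factors of $\lambda(v_i)$ lie in $\mathrm{span}\set{z,zx}$ while those of $\lambda(w_i)$ lie in $\mathrm{span}\set{zy,zxy}$ --- disjoint sets of basis elements --- and one computes $(z^*\tensor 1)\lambda(v)=\half\sum_i\alpha_i(v_i+w_i)$ and $((xz)^*\tensor 1)\lambda(v)=\half\sum_i\beta_i(v_i+w_i)$; note each term is $v_i+w_i$ rather than $v_i$, since both summands of $\lambda(v_i)$ (resp.\ $\lambda(w_i)$) contribute to the $z$ (resp.\ $zy$) coefficient. Since the vectors $v_i+w_i$ are linearly independent, these expressions vanish iff all $\alpha_i=0$, resp.\ all $\beta_i=0$, which is exactly the statement of the lemma; this is the computation the paper records.
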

\begin{proof}
    Any vector $v\in A_2$ can be uniquely expressed in the basis $\set{v_i,w_i}$. Thus, there exists $\alpha_i,\beta_i\in\kk$ such that $v=\sum_i \alpha_i v_i+\beta_iw_i$. Then we notice
    \begin{align*}
        ((xz)^*\tensor 1)\lambda(v)=\sum_i \half \beta_i(v_i+w_i).
    \end{align*}
    It is clear that this expression vanishes if and only if $\beta_i=0$ for all $i$. The second statement follows from an analagous computation.
\end{proof}

We see that in $\hkp$, $(\kK )(z\kK)=z\kK$. This implies
\begin{equation*}
    A_KA_2=\lambda^{-1}(\kK\tensor A)\lambda^{-1}(z\kK\tensor A)\subset \lambda^{-1}(z\kK\tensor A)=A_2
\end{equation*}
Similar compuations prove the proceeding lemma.
\begin{lemma}\label{AKA2 Prod Lemma}
    If $A=A_K\oplus A_2$ is an $\hkp$-comodule algebra, then 
    \begin{enumerate}
        \item $A_K$ is an $\hkp$-subcomoodule subalgebra of $A$,
        \item $A_2$ is a left (and right) $A_K$-module under the restriction of the left (resp. right) regular action of $A$ on itself, and 
        \item $A_2A_2\subset A_K$.
    \end{enumerate}
\end{lemma}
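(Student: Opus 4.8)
The plan is to exploit the fact that $\hkp$ is a $\Z/2$-graded algebra, with degree-zero part $\kK$ and degree-one part $z\kK$, and then push this grading forward along the algebra morphism $\lambda$. First I would record the product rules in $\hkp$ relative to the decomposition $\hkp=\kK\oplus z\kK$, which holds at the level of vector spaces by the basis $\mathcal{B}$: we have $\kK\cdot\kK\subseteq\kK$ since $\kK$ is a subalgebra; $\kK\cdot(z\kK)=z\kK$, already noted above; $(z\kK)\cdot\kK\subseteq z\kK$ by the mirror computation, using that $xz=zy$, $yz=zx$ and hence $(xy)z=z(xy)$, so $az\in z\kK$ for every $a\in\kK$; and finally $(z\kK)\cdot(z\kK)\subseteq\kK$. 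The last inclusion is the only one needing a short calculation: writing $\sigma$ for the algebra automorphism of $\kK$ that interchanges $x$ and $y$ (so $az=z\sigma(a)$ for $a\in\kK$), we get for $a,b\in\kK$ that $(za)(zb)=z(az)b=z^2\,\sigma(a)b$, which lies in $\kK$ because $z^2=\frac{1}{2}(1+x+y-xy)\in\kK$ and $\sigma(a)b\in\kK$.

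With these rules in hand, recall $A_K=\lambda^{-1}(\kK\tensor A)$, $A_2=\lambda^{-1}(z\kK\tensor A)$, and that $\lambda\colon A\to\hkp\tensor A$ is a morphism of algebras. For (1): if $a,b\in A_K$, then $\lambda(ab)=\lambda(a)\lambda(b)\in(\kK\tensor A)(\kK\tensor A)\subseteq\kK\tensor A$, so $ab\in A_K$; and $A_K$ is a subcomodule by its very definition as a sum of subcomodules. For (2): if $a\in A_K$ and $m\in A_2$, then $\lambda(am)=\lambda(a)\lambda(m)\in(\kK\tensor A)(z\kK\tensor A)\subseteq z\kK\tensor A$, whence $am\in A_2$, and the same computation with the factors reversed gives $ma\in A_2$; the module axioms are inherited from associativity of $A$, and $A_K$ acts by algebra elements since it is a subalgebra by (1). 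For (3): if $m,n\in A_2$, then $\lambda(mn)=\lambda(m)\lambda(n)\in(z\kK\tensor A)(z\kK\tensor A)\subseteq\kK\tensor A$, so $mn\in A_K$.

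Since all three statements reduce in the same way to the graded multiplication of $\hkp$, there is no genuine obstacle; the only slightly computational point is the identity $(z\kK)(z\kK)\subseteq\kK$, which is immediate from $z^2\in\kK$ together with the commutation relations, and one should also note in passing that the two descriptions of $A_K$ and $A_2$ (as $\lambda$-preimages and as sums of one- respectively two-dimensional subcomodules) agree, which is clear from the explicit coactions on the simple comodules $\kk_g$ and $X$ recorded above.
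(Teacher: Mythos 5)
Your proof is correct and follows the same route the paper takes: the paper's (sketched) argument likewise uses the $\Z/2$-grading $\hkp=\kK\oplus z\kK$, the displayed inclusion $(\kK)(z\kK)=z\kK$, and the fact that $\lambda$ is an algebra morphism to pull the graded multiplication back to $A_K=\lambda^{-1}(\kK\tensor A)$ and $A_2=\lambda^{-1}(z\kK\tensor A)$. You have simply written out the ``similar computations'' the paper leaves implicit, including the only nontrivial one, $(z\kK)(z\kK)\subseteq\kK$ via $az=z\sigma(a)$ and $z^2\in\kK$.
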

The last point in Lemma \ref{AKA2 Prod Lemma} will be crucial in the computations throughout the rest of this section. In particular, if $A$ is exact, then $A_K$ is isomorphic to one of the algebras from Proposition \ref{AK Struct Prop}. We see the product of elements in $A_2$ are constrained in their possible values by the (relatively) small dimension of $A_K$. The following remark formalizes this fact.

\begin{remark}\label{vivj prod str rem}
Let $A$ be an AM-exact $\hkp$-comodule algebra. Let $v_i,v_j\in V_2$ and consider $v_iv_j$. Since $\lambda$ is a morphism of algebras, it is straightforward to compute:
\begin{align*}
    4\lambda(v_iv_j)=&1\tensor (v_iv_j-v_iw_j-w_iv_j-w_iw_j)\\
    &+x\tensor (v_iv_j+v_iw_j-w_iv_j+w_iw_j)\\
    &+y\tensor (v_iv_j-v_iw_j+w_iv_j+w_iw_j)\\
    &+xy\tensor (v_iv_j+v_iw_j+w_iv_j-w_iw_j).
\end{align*}
So it must be the case that:
\begin{align*}
    v_iv_j-v_iw_j-w_iv_j-w_iw_j&=4\alpha_{ij},\\
    v_iv_j+v_iw_j-w_iv_j+w_iw_j&=4\beta_{ij}e_{x},\\
    v_iv_j-v_iw_j+w_iv_j+w_iw_j&=4\gamma_{ij}e_{y},\\
    v_iv_j+v_iw_j+w_iv_j-w_iw_j&=4\delta_{ij}e_{xy},
\end{align*}
for some collection of scalars $\alpha_{ij},\beta_{ij},\gamma_{ij},\delta_{ij}\in\kk$. Of course it could be the case that $A_g=0$ for some $g$ and no such $e_g$ can be chosen. The statements above still hold, we simply enforce the vanishing of the appropriate scalars. This system has a unique solution:
\begin{align*}
    v_iv_j&=\alpha_{ij}+\beta_{ij}e_x+\gamma_{ij}e_y+\delta_{ij}e_{xy},\\
    v_iw_j&=-\alpha_{ij}+\beta_{ij}e_x-\gamma_{ij}e_y+\delta_{ij}e_{xy},\\
    w_iv_j&=-\alpha_{ij}-\beta_{ij}e_x+\gamma_{ij}e_y+\delta_{ij}e_{xy},\\
    w_iw_j&=-\alpha_{ij}+\beta_{ij}e_x+\gamma_{ij}e_y-\delta_{ij}e_{xy}.
\end{align*}
This fact will be referenced frequently in the coming proofs. We will refer to the the collection of scalars $\alpha_{ij}, \beta_{ij}, \gamma_{ij}, \delta_{ij}\in\kk$ as the structure constants of $A$ with respect to the basis $\set{v_i}\subset V_2$. We should note that the structure constants depend on the choice of basis for $V_2$ but if all of the structure constants vanish with respect to one basis, then all of the structure constants vanish for every choice of basis for $V_2$. 
\end{remark}
We notice that the constants are defined with respect to a basis for $V_2$, not $A_2$. This follows because a basis for $V_2$ automatically defines a basis for $W_2$ because each basis vector for $V_2$ generates pairwise disjoint 2 dimensional subcomodules. The following is a useful method for determining when $A=A_K$.
\begin{lemma}\label{Square Zero Lemma}
    Let $A$ be an exact $\hkp$-comodule algebra. Suppose $A_2A_2=0$ then $A_2=0$.
\end{lemma}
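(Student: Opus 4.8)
The plan is to use the cubic vanishing $A_2A_2=0$ to promote $A_2$ from an $A_K$-sub-bimodule of $A$ to an honest two-sided, $\hkp$-costable ideal, and then to kill it using the right $\hkp$-simplicity of $A$.

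First I would verify that $A_2$ is a two-sided ideal of $A$. Lemma \ref{AKA2 Prod Lemma} gives the decomposition $A=A_K\oplus A_2$ together with $A_KA_2\subset A_2$, $A_2A_K\subset A_2$, and $A_2A_2\subset A_K$; under the present hypothesis the last inclusion collapses to $A_2A_2=0\subset A_2$, so $AA_2=(A_K\oplus A_2)A_2\subset A_2$ and, symmetrically, $A_2A\subset A_2$. Next, $A_2$ is $\hkp$-costable essentially by definition, being the sum of the two-dimensional simple $\hkp$-subcomodules of $A$; thus $\lambda(A_2)\subset\hkp\tensor A_2$.

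It then remains only to invoke simplicity. The subspace $A_2$ is an $\hkp$-costable right ideal of $A$, so right $\hkp$-simplicity forces $A_2=0$ or $A_2=A$. The latter is impossible: since $\lambda$ is an algebra map, $\lambda(1_A)=1\tensor 1_A$, so $1_A$ spans a trivial one-dimensional subcomodule and hence lies in $A_K$; as the sum $A_K\oplus A_2$ is direct, $1_A\notin A_2$. Therefore $A_2=0$.

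I do not expect a real obstacle here: the full multiplicative structure needed is already packaged in Lemma \ref{AKA2 Prod Lemma}, and the role of the hypothesis $A_2A_2=0$ is precisely to make the product $A_2A_2$ land inside $A_2$ rather than merely inside $A_K$, which is exactly what upgrades $A_2$ to an ideal. The only points requiring any care — closure of $A_2$ under two-sided multiplication by all of $A$, and the observation $1_A\in A_K$ — are both immediate.
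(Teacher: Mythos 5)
Your proposal is correct and follows the same route as the paper: use Lemma \ref{AKA2 Prod Lemma} together with the hypothesis $A_2A_2=0$ to see that $A_2$ is a proper, $\hkp$-costable two-sided ideal, and then conclude from right $\hkp$-simplicity. The paper's proof is just a terser version of exactly this argument.
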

\begin{proof}
    Lemma \ref{AKA2 Prod Lemma} shows that $A_2$ is a proper ideal and it follows directly that $A_2$ is costable. Since $A$ is exact, $A_2=0$.
\end{proof}
We will note that if all of the structure constants vanish for $A$, then $A_2A_2=0$ and thus $A=A_K$. This is the most common technique that we will use to show $A=A_K$.

\subsection{Small Dimensional Cases}
\begin{proposition}\label{AK Gen X/Y Prop}
    If $A$ is an exact $\hkp$-comodule algebra and $A_K$ is isomorphic to one of $\kk, \GA{x}, \GA{y}$, then $A=A_K$.
\end{proposition}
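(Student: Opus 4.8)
The plan is to prove that every structure constant of $A$ vanishes; by Remark~\ref{vivj prod str rem} together with the observation preceding Lemma~\ref{Square Zero Lemma}, this yields $A_2A_2=0$ and hence $A=A_K$. First I would record which structure constants vanish automatically. Since $A_K$ is isomorphic to $\kk$, $\GA{x}$, or $\GA{y}$, its decomposition into one-dimensional $\hkp$-subcomodules gives $\lambda^{-1}(g\tensor A)=0$ for $g\in\{x,y,xy\}$ in the first case, for $g\in\{y,xy\}$ in the second, and for $g\in\{x,xy\}$ in the third. Inspecting the expression for $v_iv_j$ in Remark~\ref{vivj prod str rem}, this forces $\beta_{ij}=\gamma_{ij}=\delta_{ij}=0$ when $A_K\cong\kk$; $\gamma_{ij}=\delta_{ij}=0$ when $A_K\cong\GA{x}$; and $\beta_{ij}=\delta_{ij}=0$ when $A_K\cong\GA{y}$. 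So it remains to kill $\alpha_{ij}$ in all three cases, $\beta_{ij}$ in the second, and $\gamma_{ij}$ in the third.

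The next step is to pin down the action of $A_K$ on $A_2=V_2\oplus W_2$. Let $e_x\in A_K$ (resp. $e_y\in A_K$) correspond to $x$ (resp. $y$) under the isomorphism with $\GA{x}$ (resp. $\GA{y}$), so that $\lambda(e_x)=x\tensor e_x$ and $e_x^2=1$ (resp. $e_y$). Computing $\lambda(e_xv_i)$ and $\lambda(v_ie_x)$ directly from the explicit coaction on $X$, using the relations $xz=zy$, $yz=zx$ and the detection criterion of Lemma~\ref{V detect lemma}, one finds that left multiplication by $e_x$ interchanges $V_2$ and $W_2$ while right multiplication by $e_x$ preserves each of $V_2,W_2$, and that the roles are reversed for $e_y$ (left multiplication by $e_y$ preserves $V_2,W_2$; right multiplication by $e_y$ interchanges them). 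Since $e_x^2=e_y^2=1$, left and right multiplication by $e_x$ and by $e_y$ are bijections of $A_2$, so $e_xv_k\neq 0$ and $v_ie_y\neq 0$ whenever $v_k,v_i\neq 0$.

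Now the argument becomes uniform. We may assume $V_2\neq 0$, else $A_2=0$. Fix basis vectors $v_i,v_j,v_k$ of $V_2$ and expand both sides of the associativity identity $(v_iv_j)v_k=v_i(v_jv_k)$ using Remark~\ref{vivj prod str rem} and the multiplication rules just obtained. When $A_K\cong\GA{x}$ the left-hand side has $W_2$-component $\beta_{ij}(e_xv_k)$, since $e_x$ carries $V_2$ into $W_2$, while the right-hand side lies in $V_2$, since $e_x$ on the right preserves $V_2$; comparing $W_2$-components forces $\beta_{ij}=0$. The case $A_K\cong\GA{y}$ is symmetric and yields $\gamma_{ij}=0$. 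In all three cases we are thus reduced to $v_iv_j=\alpha_{ij}\cdot 1$, whence $(v_iv_j)v_k=v_i(v_jv_k)$ becomes $\alpha_{ij}v_k=\alpha_{jk}v_i$. If $\dim V_2\geq 2$, linear independence of two distinct $v_i$'s forces all $\alpha_{ij}=0$; if $\dim V_2=1$, writing $v=v_1$, $w=w_1$ and using $vw=-\alpha_{11}$ from Remark~\ref{vivj prod str rem}, the identity $(vv)w=v(vw)$ reads $\alpha_{11}w=-\alpha_{11}v$, forcing $\alpha_{11}=0$ by linear independence of $v,w$. Hence every structure constant of $A$ vanishes, so $A_2A_2=0$ and $A_2=0$ by Lemma~\ref{Square Zero Lemma}; that is, $A=A_K$.

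The one genuinely non-formal step is the coaction computation of the second paragraph: determining how left and right multiplication by $e_x$ and by $e_y$ interact with the splitting $A_2=V_2\oplus W_2$. This is where the relations of $\hkp$ are used and where the three cases genuinely differ. Once it is established, the remainder is routine bookkeeping with associativity and linear independence.
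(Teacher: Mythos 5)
Your proof is correct, and it reaches the same destination as the paper's --- all structure constants vanish, so $A_2A_2=0$ and Lemma \ref{Square Zero Lemma} applies --- but by a noticeably different route. The paper's proof works case by case with ad hoc triple products: for $A_K\cong\kk$ it compares $\alpha_{ij}w_k=v_iv_jw_k=-\alpha_{jk}v_i$; for $A_K\cong\GA{x}$ it first extracts the relation $(v_i+w_i)v_j=0$ from Remark \ref{vivj prod str rem} and then multiplies by $w_j$ and by $e_xv_j$ to isolate $\alpha_{ji}$ and $\beta_{ji}$ separately (and symmetrically for $\GA{y}$). You instead front-load the content of Lemma \ref{L and R V and W Lemma} --- that $L_x$ and $R_y$ interchange $V_2$ and $W_2$ while $R_x$ and $L_y$ preserve them --- which in the paper is only proved and used later; granting that, your comparison of $V_2$- and $W_2$-components of $(v_iv_j)v_k=v_i(v_jv_k)$ kills $\beta_{ij}$ (resp.\ $\gamma_{ij}$) uniformly, and the remaining $\alpha$-computation coincides with the paper's $\kk$-case. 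The trade-off: your argument requires the coaction computation behind Lemma \ref{L and R V and W Lemma} up front (which you correctly flag as the one non-formal step, and which the paper carries out anyway shortly afterward), while the paper's version is self-contained at this point but more case-dependent. Two minor remarks: your observation that $\beta,\gamma,\delta$ vanish automatically when the corresponding $A_g=0$ is exactly the convention adopted in Remark \ref{vivj prod str rem}, so that step is sound; and your split into $\dim V_2\geq 2$ versus $\dim V_2=1$ in the $\alpha$-step can be avoided by always multiplying by $w_k$ rather than $v_k$, since $v_i$ and $w_k$ are automatically linearly independent.
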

\begin{proof}
    Let $A$ be as given.
    
    Suppose $A_K\cong \kk$ and that $A_2\neq 0$. That is, there exists $n_2\neq 0$ and $\set{v_i,w_i}_{i=1}^{n_2}$ a basis for $A_2$. Let $1\leq i,j,k\leq n_2$, notice by remark \ref{vivj prod str rem}:
    \begin{equation*}
        \alpha_{ij}w_k=v_iv_jw_k=-\alpha_{jk}v_i.
    \end{equation*}
    Since $w_k,v_i$ are $\kk$-linearly independent, $\alpha_{ij}=0$ for all $i,j$. Thus by Lemma \ref{Square Zero Lemma}, $A_2=0$ and thus $A=A_K$.

    Now suppose $A_K\cong \GA{x}$ and $A_2\neq 0$.  Notice Remark \ref{vivj prod str rem} implies $v_iv_j=-w_iv_j$ so $(v_i+w_i)v_j=0$ for all $i,j$. Thus
    \begin{align*}
        0&=w_j(v_i+w_i)v_j=(w_jv_i+w_jw_i)v_j=-2\alpha_{ji}v_j,\\
        0&=e_xv_j(v_i+w_i)v_j=e_x(v_jv_i+v_jw_i)v_j=2\beta_{ji}v_j.
    \end{align*}
    This implies $\alpha_{ij}=\beta_{ij}=0$ for all $i,j$. Thus by Lemma \ref{Square Zero Lemma}, $A=A_K$. Similarly, if $A_K=\GA{y}$ and $A_2\neq 0$, then $v_iv_j+v_iw_j=0$ for all $i,j$. We then notice
    \begin{align*}
        0&=v_i(v_j+w_j)w_i=v_i(v_jw_i+w_jw_i)=-2\alpha_{ji}v_i,\\
        0&=v_i(v_j+w_j) v_i e_y =v_i(v_jv_i+w_jv_i) e_y=2\gamma_{ji} v_i.
    \end{align*}
    This implies $\alpha_{ij}=\gamma_{ij}=0$ for all $i,j$. Thus by Lemma \ref{Square Zero Lemma}, $A=A_K$.
\end{proof}

We establish some notation. If $A_K=\GA{g}$ for some $g\neq 1$, then we take a basis for $A_K$ given by $\set{1,e_g}$ such that $e_g^2=1$ and $\lambda(e_g)=g\tensor e_g$. If $A_K$ is isomorphic to $\kK$ or $\kkpsiK$, then we take a basis for $A_K$ by $\set{1,e_x,e_y,e_{xy}}$ such that $e_x^2=e_y^2=1$ and $\lambda(e_g)=g\tensor e_g$. This choice can be made regardless of the deformation by $\psi$ and is unique up to multiplication by $-1$. 

If $g\in K$ such that $\lambda^{-1}(g\tensor A)=1$, then define $L_g, R_g\in \End_\kk(A)$ given by left and right (respectively) multiplication by $e_g$. We notice each such map is invertible and the next lemma shows how this action interacts with the two dimensional subcomodules.


\begin{lemma}\label{L and R V and W Lemma}
    Let $A$ be an exact $\hkp$-comodule algebra. Then (when defined),
    \begin{enumerate}
        \item $L_y(V_2)=R_x(V_2)=V_2$, and
        \item $L_x(V_2)=R_y(V_2)=W_2$.
        \item $L_{xy}(V_2)=R_{xy}(V_2)=W_2$.
    \end{enumerate}
\end{lemma}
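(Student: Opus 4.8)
The plan is to route every assertion through the detection criterion of Lemma~\ref{V detect lemma}: an element of $A_2$ lies in $V_2$ exactly when $((xz)^*\tensor 1)$ annihilates its coaction, and lies in $W_2$ exactly when $(z^*\tensor 1)$ does. So for each of the six maps in the statement I would apply it to an arbitrary $v\in V_2$, rewrite the coaction of its image using that $\lambda_A$ is a morphism of algebras, and then read off which of the two functionals kills it. This is legitimate: since each $e_g$ lies in $A_K$ and $A_2$ is a two-sided $A_K$-module by Lemma~\ref{AKA2 Prod Lemma}, the images $e_g v$ and $v e_g$ again lie in $A_2$, so the criterion applies to them.

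The first concrete step would be to put the coaction of a generic $v\in V_2$ into a convenient shape. Writing $\tau(v)\in W_2$ for the companion of $v$ and expanding $(1\pm y)z$ in the basis $\mathcal{B}$ via $yz=zx$ and $xz=zy$ gives
\begin{equation*}
    \lambda(v)=\half\bracks{\,z\tensor(v+\tau(v))+zx\tensor(v-\tau(v))\,},
\end{equation*}
so only the basis vectors $z$ and $zx$ occur; in particular $((xz)^*\tensor 1)\lambda(v)=0$, since $xz=zy$ in $\hkp$ and $zy\notin\set{z,zx}$, which re-proves $v\in V_2$. For part~(1) I would then compute $\lambda(e_y v)=(y\tensor e_y)\lambda(v)$ and, using $y\cdot z=zx$, $y\cdot zx=z$, find it again supported on $\set{z,zx}$; hence $((xz)^*\tensor 1)\lambda(e_y v)=0$ and $e_y v\in V_2$. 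The computation of $\lambda(v e_x)=\lambda(v)(x\tensor e_x)$ is identical and gives $v e_x\in V_2$.

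For parts~(2) and~(3) the analogous computations for $e_x v$, $v e_y$, $e_{xy} v$, $v e_{xy}$ all produce coactions supported on $\set{zy,zxy}$ instead of $\set{z,zx}$ (using $x\cdot z=zy$, $x\cdot zx=zxy$, $z\cdot y=zy$, $zx\cdot y=zxy$, and that $z$ commutes with $xy$), so $(z^*\tensor 1)$ annihilates them and the images fall in $W_2$. To turn the resulting inclusions $L_y(V_2)\subseteq V_2$, $L_x(V_2)\subseteq W_2$, and so on, into equalities, I would invoke that each $L_g,R_g$ is invertible (as already observed --- concretely $e_g^2\in A^{\co\lambda}=\kk$ is nonzero by right $\hkp$-simplicity, so $e_g$ is a unit) together with $\dim V_2=\dim W_2$, which holds because the fixed basis of $V_2$ canonically determines one of $W_2$.

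I expect the only real obstacle to be the bookkeeping: products such as $x(1-y)z$ and $xy\cdot zx$ have to be rewritten in the basis $\mathcal{B}$ via the defining relations, and one must keep in mind throughout that $xz$ and $zy$ denote the same basis vector (so $(xz)^*$ is to be read as $(zy)^*$). Fixing the rewriting rules $zx=yz$, $zy=xz$, $z(xy)=(xy)z$ at the outset makes the four cases of~(2)--(3) collapse into essentially a single calculation.
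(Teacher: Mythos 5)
Your proposal is correct and takes essentially the same route as the paper: membership in $V_2$ or $W_2$ is detected via Lemma~\ref{V detect lemma}, the coactions of $e_gv$ and $ve_g$ are computed using that $\lambda$ is an algebra morphism, and the inclusions are upgraded to equalities using the invertibility of $L_g$ and $R_g$. Your preliminary rewriting of $\lambda(v)$ so that it is supported on the basis vectors $z$ and $zx$ is just a streamlining of the same calculation the paper carries out case by case.
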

\begin{proof}
    Let $v_i\in V_2$ and consider $L_y(v_i)$:
    \begin{align*}
        ((xz)^*\tensor 1)\lambda(L_y(v_i))&=((xz)^*\tensor 1)\lambda(e_yv_i)\\
        &=(xz)^*\tensor 1)\paren{(y\tensor e_y)\half \Big[(1+y)z\tensor v_i+(1-y)z\tensor w_i\Big]}\\
        &=(xz)^*\tensor 1)\paren{\half \Big[(1+y)z\tensor e_yv_i-(1-y)z\tensor e_yw_i\Big]}\\
        &=0.
    \end{align*}
    Thus by Lemma \ref{V detect lemma}, $L_y(V_2)\subset V_2$. Since $L_y^2=1$, $L_y(V_2)=V_2$. Now consider $L_x(w_i)$:
    \begin{align*}
        ((xz)^*\tensor 1)\lambda(L_x(w_i))&=((xz)^*\tensor 1)\lambda(e_xw_i)\\
        &=((xz)^*\tensor 1)\paren{\half \Big[(1-y)z\tensor e_xv_i+(1+y)z\tensor e_xw_i\Big]}\\
        &=0.
    \end{align*}
    Thus $L_x(W_2)\subset V_2$. Again, by invertibility this implies $L_x(W_2)=V_2$ and 2 above holds by applying $L_x$ again. Similar computations can be carried out to prove the remaining statments.
\end{proof}

\begin{proposition}
    If $A_K=\GA{xy}$ then $n_2$ is at most 1.
\end{proposition}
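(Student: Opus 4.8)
The plan is to assume $n_2\geq 2$ and derive a contradiction with the right $\hkp$-simplicity of $A$: I will force $A_2A_2=0$ and then invoke Lemma~\ref{Square Zero Lemma} to conclude $A_2=0$, contradicting $n_2\geq 2$.

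First I would pin down the structure constants in this case. Since $A_K\cong\GA{xy}$, as an $\hkp$-comodule $A_K\cong\kk_1\oplus\kk_{xy}$, so $\lambda^{-1}(x\tensor A)=\lambda^{-1}(y\tensor A)=0$ and $\lambda^{-1}(xy\tensor A)=\kk e_{xy}$ is one dimensional. Hence in Remark~\ref{vivj prod str rem} all the scalars $\beta_{ij}$ and $\gamma_{ij}$ vanish, so for all $i,j$ we have $v_iv_j=\alpha_{ij}+\delta_{ij}e_{xy}$, $v_iw_j=w_iv_j=-\alpha_{ij}+\delta_{ij}e_{xy}$, and $w_iw_j=-\alpha_{ij}-\delta_{ij}e_{xy}$. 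Because $\lambda^{-1}(xy\tensor A)$ is one dimensional, the operators $L_{xy}$ and $R_{xy}$ are defined, and $e_{xy}^2=1$ makes them involutions of $A$; by Lemma~\ref{L and R V and W Lemma}(3) they therefore restrict to mutually inverse linear isomorphisms between $V_2$ and $W_2$. I will write $\Phi:=L_{xy}|_{V_2}\colon V_2\to W_2$ and $\Psi:=R_{xy}|_{V_2}\colon V_2\to W_2$, both invertible.

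The key step is associativity. Expanding $(v_iv_j)v_k=v_i(v_jv_k)$ with the formulas above gives
\begin{equation*}
\alpha_{ij}v_k+\delta_{ij}\,e_{xy}v_k=\alpha_{jk}v_i+\delta_{jk}\,v_ie_{xy}
\end{equation*}
as an identity in $A_2=V_2\oplus W_2$. Projecting onto $V_2$ and onto $W_2$ separately (legitimate since $\alpha_{ij}v_k,\alpha_{jk}v_i\in V_2$ while $e_{xy}v_k,v_ie_{xy}\in W_2$) yields $\alpha_{ij}v_k=\alpha_{jk}v_i$ and $\delta_{ij}\Phi(v_k)=\delta_{jk}\Psi(v_i)$ for all $i,j,k$. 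Because $n_2\geq 2$, for each fixed $i,j$ I can choose $k\neq i$, and linear independence of $v_i,v_k$ forces $\alpha_{ij}=0$; hence all $\alpha_{ij}$ vanish. For the second relation, if some $\delta_{i_0j_0}\neq 0$ then $\Phi(v_k)\in\kk\,\Psi(v_{i_0})$ for every $k$, so $\operatorname{rank}\Phi\leq 1$, contradicting invertibility of $\Phi$ together with $\dim V_2=n_2\geq 2$; therefore all $\delta_{ij}$ vanish as well. With $\alpha_{ij}=\beta_{ij}=\gamma_{ij}=\delta_{ij}=0$, every product of basis vectors of $A_2$ is zero, so $A_2A_2=0$, and Lemma~\ref{Square Zero Lemma} gives $A_2=0$, i.e.\ $n_2=0$ — contradiction. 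Thus $n_2\leq 1$.

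I expect the only genuinely delicate point to be the observation that $e_{xy}$-multiplication is an honest \emph{isomorphism} $V_2\to W_2$, not merely a map landing in $W_2$: it is precisely the invertibility of $\Phi$ — a property that fails to be useful once $n_2\leq 1$ — that kills the $\delta$-constants, and this is exactly where the hypothesis $n_2\geq 2$ enters a second time (the first being the linear-independence argument for the $\alpha$'s). The structure-constant bookkeeping and the associativity expansion itself are routine once Remark~\ref{vivj prod str rem} and Lemma~\ref{L and R V and W Lemma} are in hand.
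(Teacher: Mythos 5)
Your proof is correct, and its overall strategy is the paper's: expand triple products $v_iv_jv_k$ two ways using the structure constants of Remark \ref{vivj prod str rem} (with $\beta_{ij}=\gamma_{ij}=0$ forced by $A_x=A_y=0$), separate the $V_2$- and $W_2$-components via Lemma \ref{L and R V and W Lemma}(3), kill all structure constants, and invoke Lemma \ref{Square Zero Lemma}. The elimination of the $\alpha_{ij}$ is essentially identical to the paper's (the paper specializes to $v_jv_jv_i$ with $i\neq j$; you keep three free indices, but both reduce to linear independence of two basis vectors of $V_2$). Where you genuinely diverge is the $\delta_{ij}$: the paper first observes $v_i(v_j-w_j)=0$ once the $\alpha$'s vanish and then computes the four-fold product $v_i(v_j-w_j)w_ke_{xy}=2\delta_{jk}v_i$ to conclude, whereas you read off the $W_2$-component $\delta_{ij}L_{xy}(v_k)=\delta_{jk}R_{xy}(v_i)$ of the same associativity identity and argue that a nonzero $\delta_{i_0j_0}$ would force $L_{xy}|_{V_2}$ to have rank at most one, contradicting its invertibility (from $e_{xy}^2=1$) when $n_2\geq 2$. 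Your version has the merit of extracting everything from a single associativity relation and of making explicit that the hypothesis $n_2\geq 2$ is used a second time at the $\delta$-step; the paper's version avoids any appeal to invertibility of $L_{xy}$ and is a purely computational identity. Both are complete.
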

\begin{proof}
    Suppose $n_2> 1$. Consider the product $v_jv_jv_i$ for $i\neq j$. Notice:
    \begin{equation*}
        \alpha_{jj}v_i+\delta_{jj}L_{xy}(v_i)=(\alpha_{jj}+\delta_{jj}e_{xy})v_j=v_jv_jv_i=v_j(\alpha_{ji}+\delta_{ij}e_{xy})=\alpha_{ji}v_j+\delta_{ij}R_{xy}(v_j)
    \end{equation*}
    By the Lemma \ref{L and R V and W Lemma}, $L_{xy}(v_i), R_{xy}(v_j)\in W_2$ because $L_{xy}$ is proportional to $L_xL_y$ (similarly for $R_{xy}$). Thus by linear independence of $v_i,v_j$, it must be the case that $\alpha_{jj}=\alpha_{ij}=0$ for all $i,j$. Thus:
    \begin{align*}
        v_iv_j&=\delta_{ij}e_{xy},\\
        v_iw_j&=\delta_{ij}e_{xy},\\
        w_iv_j&=\delta_{ij}e_{xy},\\
        w_iw_j&=-\delta_{ij}e_{xy},
    \end{align*}
    for some collection $\delta_{ij}\in\kk$. Notice that $v_i(v_j-w_j)=0$. Thus:
    \begin{equation*}
        0=v_i(v_j-w_j)w_ke_{xy}=v_i(v_jw_k-w_jw_k)e_{xy}=2\delta_{jk}v_i
    \end{equation*}
    Thus $\delta_{jk}=0$ for all $j,k$. Thus by a Lemma \ref{Square Zero Lemma}, $A=A_K$, a contradiction.
\end{proof}
Now we will show that there exists an $\hkp$-comodule algebra $A$ such that $A_K=\GA{xy}$ and $A\neq A_K$.

\begin{proposition}\label{gamma coideal Prop}
    There is one isomorphism class of AM-exact $\hkp$-comodule algebra with $A_K=\GA{xy}$ and $A\neq A_K$.
\end{proposition}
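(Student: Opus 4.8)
The plan is to reconstruct the entire multiplication table of such an $A$ from the ingredients already assembled, deduce that it is unique up to the very limited freedom left in the choice of compatible bases, and then exhibit one concrete model (a left coideal subalgebra of $\hkp$) to see the class is nonempty.

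By the preceding proposition $n_2\le 1$, and since $A\neq A_K$ we must have $n_2=1$, so $A$ is four-dimensional with $A=A_K\oplus A_2$: here $A_K\cong\GA{xy}$ is spanned by $1$ and $e:=e_{xy}$ (with $e^2=1$, $\lambda(e)=xy\tensor e$) and $A_2$ is a single copy of the simple comodule $X$, with standard basis $v\in V_2$, $w\in W_2$. Because $A_x=A_y=0$, the only surviving structure constants of Remark~\ref{vivj prod str rem} are $\alpha:=\alpha_{11}$ and $\delta:=\delta_{11}$, through which all products of $V_2$ with $W_2$ are expressed. By Lemma~\ref{L and R V and W Lemma} left and right multiplication by $e$ both interchange $V_2=\kk v$ and $W_2=\kk w$, so I would write $ev=c_1w$, $ve=c_2w$, $ew=c_3v$, $we=c_4v$; applying $e$ again and using $e^2=1$ forces $c_1c_3=c_2c_4=1$.

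Next I would impose associativity on the triple products mixing $e$ with $v,w$ — $e(vv)=(ev)v$, $(vv)e=v(ve)$, $v(ev)=(ve)v$, and the analogues involving $w$. These identities give $c_1=c_2$ (hence $c_3=c_4=c_1^{-1}$) together with homogeneous linear relations among $\alpha,\delta,c_1$. There are two cases. Either the relations force $\alpha=\delta=0$, whence every product of $A_2$ with itself vanishes, so $A_2A_2=0$ and Lemma~\ref{Square Zero Lemma} gives $A_2=0$, contradicting $A\neq A_K$. Or $\alpha\neq 0$, and then the relations pin $c_1$ down as a root of a fixed quadratic over $\kk$ and express $\delta$ as a definite scalar multiple of $\alpha$, so the algebra structure on $A$ depends only on the parameter $\alpha\in\kk^\times$ and the choice of $c_1$. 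For uniqueness up to isomorphism, observe that a morphism of $\hkp$-comodule algebras between two such $A$'s restricts to an algebra isomorphism $A_K\to A_K'$ (necessarily $e\mapsto\pm e$, since $e^2=1$) and to a comodule isomorphism $A_2\to A_2'$, which by Schur's lemma ($\End_{\hkp}(X)=\kk$, as $\kk$ is algebraically closed) is rescaling of the standard basis by a common $t\in\kk^\times$. The sign change on $e$ flips $\delta$ and $c_1$, while the rescaling multiplies $\alpha,\delta$ by $t^2$; choosing $t$ with $t^2=\alpha^{-1}$ normalizes $\alpha=1$, and the sign of $e$ then normalizes the remaining scalar. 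Hence any two AM-exact $\hkp$-comodule algebras with $A_K=\GA{xy}$ and $A\neq A_K$ are isomorphic.

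For existence I would exhibit a left coideal subalgebra of $\hkp$ of this type. Since $xy$ is grouplike and the space $z\kK$ decomposes as $X\oplus X$, one can pick out the copy of $X$ spanned by $v:=(1+iy)z$ and $w:=x(1-iy)z$; a short computation with the defining relations (in particular $z^2=\tfrac12(1+x+y-xy)$, $zx=yz$, $zy=xz$) shows that $vv,vw,wv,ww$ lie in $\operatorname{span}_\kk\{1,xy\}$ and that $xy$ sends $v,w$ into $\operatorname{span}_\kk\{v,w\}$ on either side, so $B:=\operatorname{span}_\kk\{1,\,xy,\,v,\,w\}$ is a subalgebra, and being also a subcomodule it is a left coideal subalgebra of $\hkp$ with $B_K=\GA{xy}$ and $B\neq B_K$. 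By Theorem~\ref{4.2iii_SKR}, $B$ is right $\hkp$-simple, and its coinvariants are $\kk 1$, so $B$ is AM-exact. Together with the uniqueness above this proves the proposition. The main obstacle is bookkeeping: squeezing out enough associativity identities to genuinely determine $\alpha,\delta,c_1$ (so the uniqueness claim is airtight), and carrying out the closure computation for $B$ — invoking Theorem~\ref{4.2iii_SKR} for the explicit coideal subalgebra is exactly what spares us a bare-hands verification of associativity and right $\hkp$-simplicity for the abstract model.
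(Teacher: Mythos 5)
Your proof is correct and follows essentially the same route as the paper: reduce to the four-dimensional case $\GA{xy}\oplus X$, pin down the structure constants (the paper's $\gamma$ is your $c_1$, with $\gamma^2=-1$ and $\delta=-\gamma\alpha$), normalize by rescaling $v,w$ and adjusting the sign of $e_{xy}$, and realize the algebra as the coideal subalgebra generated by $(1+iy)z$. The only cosmetic differences are that you extract the constraint $c_1^2=-1$ from associativity of $e(vv)=(ev)v$ where the paper uses multiplicativity of the coaction on $e_{xy}v$, and that you explicitly dispose of the degenerate case $\alpha=0$ via Lemma \ref{Square Zero Lemma}, which the paper leaves implicit when it normalizes $\alpha=1$.
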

\begin{proof}
    If such an algebra exists, it must be isomorphic to $\GA{xy}\oplus X$ as $H_8$-comodules. Thus we take a basis $\set{1,e_{xy},v,w}$ for $A$. Notice by Remark \ref{vivj prod str rem}, $v$ and $w$ commute. By Lemma \ref{L and R V and W Lemma}, there exists nonzero $\gamma\in\kk$ such that $L_{xy}(v)=\gamma w$. Notice further that $v^2\in A_K$ and as such commutes with $e_{xy}$. This implies
    \begin{equation*}
        \gamma wv=(e_{xy}v)v=v(ve_{xy})=vR_{xy}(v)
    \end{equation*}
    and since $R_{xy}(v)$ is a scalar multiple of $w$, and $w$ and $v$ commute, $R_{xy}(v)=\gamma w$.

    We now notice that
    \begin{align*}
        \lambda(e_{xy}v)&=(xy\tensor e_{xy})\half \Big[(1+y)z\tensor v+(1-y)z\tensor w\Big]\\
        &=\half \Big[x(1+y)z\tensor e_{xy}v+ x(1-y)z\tensor -e_{xy}w\Big]\\
        &=\half \Big[x(1+y)z\tensor \gamma w+ x(1-y)z\tensor -\gamma^{-1}v\Big]
    \end{align*}
    but of course this must equal $\lambda(\gamma w)=\half \Big[x(1-y)z\tensor \gamma v+x(1+y)z\tensor \gamma w\Big]$. Thus $\gamma=-\gamma^{-1}$ and so $\gamma$ is necessarily a primitive fourth root of unity. Let $\alpha,\delta\in \kk$ be scalars as in Remark \ref{vivj prod str rem}. Then we notice $e_{xy}v^2=\gamma wv$, which gives the equality
    \begin{equation*}
        e_{xy}(\alpha+\delta e_{xy})=e_{xy}v^2=\gamma wv=\gamma (-\alpha+\delta e_{xy})
    \end{equation*}
    and implies $\delta=-\alpha\gamma$. By scaling $v$ and $w$, we can assume $\alpha=1$. We will denote this algebra $A_{xy}^\gamma$. Notice the choice of $\gamma$ is unique up to $-1$ but we see that the linear map $f:A_{xy}^\gamma\to A_{xy}^{-\gamma}$ defined by:
    \begin{align*}
        f(1)&=1,&f(v)&=-v,\\
        f(e_{xy})&=-e_{xy},&f(w)&=-w.
    \end{align*}
    defines an $H$-comodule algebra isomorphism $A_{xy}^\gamma\cong A_{xy}^{-\gamma}$. Thus the isomorphism class of $A_{xy}^\gamma$ is independent of the choice of primitive fourth root of unity. To see that $A_{xy}^\gamma$ is right $H_8$-simple, it follows from direct computation that the coideal subalgebra $S$ generated by $z+\gamma yz$ is isomorphic to $\GA{xy}\oplus X$ as comodules. The argument above implies $S\cong A_{xy}^\gamma$ as $H_8$-comodule algebras and coideal subalgebras are AM-exact.
\end{proof}

\subsection{Larger Cases}

The last two cases to consider are when $A_K=\kK$ or $A_K=\kkpsiK$. We notice that regardless of the isomorphism type of $A_K$, $R_x$ and $L_y$ commute by associativity and square to the identity. This defines a $\kK$-module structure on $A$ where $x$ acts by $R_x$ and $y$ acts by $L_y$. We notice that Lemma \ref{L and R V and W Lemma} implies $A_2, V_2, W_2$ are submodules under this action. Thus we can take a basis $\set{v_i}_{i=1}^{n_2}\subset V_2$ such that each $v_i$ generates a simple $\kK$-module. That is, $L_y(v_i), R_x(v_i)\in \set{\pm v_i}$ for all $i$. Therefore, $V_2=\bigoplus_{a,b\in\set{\pm 1}} V_2(\mu_{a,b})$ where 
\begin{equation*}
    V_2(\mu_{a,b}):=\set{v\in V_2:R_x(v)=av, L_y(v)=bv}.
\end{equation*}
Although we will investigate each case separately, we first collect some general facts.

\begin{lemma}\label{Tau Dim Lem}
    For all $a,b\in\set{\pm 1}$,  $\dim(V(\mu_{a,b}))=\dim(W(\mu_{-a,-b}))$.
\end{lemma}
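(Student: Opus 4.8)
The plan is to relate the action of the four maps $L_x, L_y, R_x, R_y$ on the two-dimensional subcomodules so that multiplication by any grouplike translates $V_2$-eigenspaces to $W_2$-eigenspaces in a controlled way, and then use the structure-constant relations from Remark \ref{vivj prod str rem} to pin down a pairing between these eigenspaces. First I would recall from Lemma \ref{L and R V and W Lemma} that $L_y$ and $R_x$ preserve $V_2$ while $L_x$ and $R_y$ carry $V_2$ isomorphically onto $W_2$ (and $L_{xy}, R_{xy}$ likewise carry $V_2$ onto $W_2$). Since $L_x$ and $L_y$ commute (both are left multiplications by commuting elements $e_x, e_y$ of $A_K$, using $xy = yx$ in $\hkp$), and similarly $R_x, R_y$ commute, and left multiplications commute with right multiplications by associativity, the whole group $\langle L_x, L_y, R_x, R_y\rangle$ is an abelian group of order dividing $16$ acting on $A_2$, restricting to the $\kK\times\kK$-action whose $V_2$-part is recorded by the characters $\mu_{a,b}$.

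The key step is to track where the isomorphism $L_x\colon V_2\to W_2$ sends a simple summand. If $v\in V_2(\mu_{a,b})$, i.e. $R_x(v) = av$ and $L_y(v) = bv$, then I would compute the eigenvalues of $R_x$ and $L_y$ on $L_x(v)\in W_2$. Since $R_x$ commutes with $L_x$, we get $R_x(L_x v) = L_x(R_x v) = a\, L_x(v)$. For $L_y$ the relation $L_y L_x = L_x L_y$ gives $L_y(L_x v) = L_x(L_y v) = b\, L_x(v)$ — but wait, I must be careful: I want to describe $W_2$ in terms of the \emph{same} indexing maps $R_x, L_y$, and on $W_2$ one should check (via Lemma \ref{V detect lemma}-style coaction computations, analogous to the proof of Lemma \ref{L and R V and W Lemma}) that $L_x(v_i)$ plays the role of ``$w$'' relative to the basis vector $L_x(v_i)$ generates; the sign bookkeeping of how $R_x, L_y$ act on the $w$-vectors versus the $v$-vectors is exactly what produces the sign flip $\mu_{a,b}\mapsto\mu_{-a,-b}$. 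Concretely, the coaction formulas
\begin{align*}
    \lambda(v_i)&=\half\Big[(1+y)z\tensor v_i+(1-y)z\tensor w_i\Big],\\
    \lambda(w_i)&=\half\Big[x(1-y)z\tensor w_i+x(1+y)z\tensor w_i\Big]
\end{align*}
show the $x$-twist relating the $v_i$-line and the $w_i$-line, and chasing $e_x$ through as in Lemma \ref{L and R V and W Lemma} yields that the character of $W_2(\text{line of }L_x v)$ under $(R_x, L_y)$ is $(-a, -b)$ precisely when $v$ has character $(a,b)$. Hence $L_x$ restricts to a linear isomorphism $V_2(\mu_{a,b})\xrightarrow{\sim} W_2(\mu_{-a,-b})$, giving $\dim V_2(\mu_{a,b}) = \dim W_2(\mu_{-a,-b})$, which is the claim.

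The main obstacle I anticipate is the sign bookkeeping: making precise what ``$W_2(\mu_{a,b})$'' means — i.e. fixing the convention for how $R_x$ and $L_y$ act on a $w$-vector attached to a given $v$-vector — and verifying that the $x$-twist in the coaction on $w_i$ versus $v_i$ is exactly what converts the pair $(a,b)$ into $(-a,-b)$ rather than, say, $(a,b)$ or $(-a, b)$. This is entirely a finite computation with the explicit coaction and the relations $yz = zx$, $xz = zy$ in $\hkp$, of the same flavor as the proof of Lemma \ref{L and R V and W Lemma}, so I would carry it out once carefully and then the dimension equality is immediate from the invertibility of $L_x$ on $A_2$ (which follows since $e_x^2 = 1$, so $L_x^2 = \id$). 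No deeper structural input is needed; in particular $A$-exactness is used only insofar as it guarantees $\dim\lambda^{-1}(g\tensor A)\le 1$ so that the maps $L_g, R_g$ are well-defined, which is already in force by the standing hypotheses and Proposition \ref{AK Struct Prop}.
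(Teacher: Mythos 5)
There is a genuine gap: the map you propose, $L_x\colon V_2\to W_2$, cannot carry $V(\mu_{a,b})$ onto $W(\mu_{-a,-b})$, because associativity alone forces $R_x(L_x(v))=L_x(R_x(v))=a\,L_x(v)$ --- left multiplications always commute with right multiplications, so the $R_x$-eigenvalue of $L_x(v)$ is $a$, not $-a$, and no coaction bookkeeping can override this. (The $L_y$-eigenvalue of $L_x(v)$ is likewise $b$ when $A_K\cong\kK$, since $e_x$ and $e_y$ commute there; it is $-b$ only in the twisted case, where incidentally $\gener{L_x,L_y,R_x,R_y}$ is not abelian as you assert, since $e_xe_y=-e_ye_x$ in $\kkpsiK$.) The same obstruction rules out every candidate built from multiplication operators: any left multiplication preserves all $R_x$-eigenspaces and any right multiplication preserves all $L_y$-eigenspaces, so in the group-algebra case no such operator can flip either sign. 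What $L_x$ actually shows there is $\dim V(\mu_{a,b})=\dim W(\mu_{a,b})$ --- a true but different statement.

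The sign flip in the lemma comes from the comodule structure, not the algebra structure. The paper uses the involution $\tau\colon A_2\to A_2$ interchanging $v_i$ and $w_i$ (fixed when the basis for $A$ is chosen), which restricts to a bijection $V_2\to W_2$. Writing $\lambda(v)=\half\bigl[(1+y)z\tensor v+(1-y)z\tensor\tau(v)\bigr]$ and applying $y\tensor e_y$, the identities $y(1+y)z=(1+y)z$ and $y(1-y)z=-(1-y)z$ put a sign on the $\tau(v)$-component only; comparing with $\lambda(bv)$ gives $L_y(\tau(v))=-b\,\tau(v)$, and the analogous computation with $x\tensor e_x$ acting on the right gives $R_x(\tau(v))=-a\,\tau(v)$. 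Hence $\tau$ maps $V(\mu_{a,b})$ bijectively onto $W(\mu_{-a,-b})$. Your instinct that the $x$-twist relating the $v_i$-line and the $w_i$-line is responsible for the flip is the right one, but the map realizing it must be the comodule-theoretic transposition $\tau$, not multiplication by a grouplike.
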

\begin{proof}
    Suppose $v\in V(\mu_{a,b})$. Then
    \begin{align*}
        \lambda(L_y(v))&=(y\tensor e_y)\half \Big[(1+y)z\tensor v+(1-y)z\tensor \tau(v)\Big]\\
        &=\half \Big[(1+y)z\tensor bv+(1-y)z\tensor -L_y(\tau(v))\Big]
    \end{align*}
    and this is equal to $\lambda(b v)=\half \Big[(1+y)z\tensor bv+(1-y)z\tensor b\tau(v)\Big]$. Thus $L_y(\tau(v))=-b\tau(v)$. A similar computation shows that $R_x(\tau(v))=-av$. The result then follows from bijectivity of $L_y, R_x$ and $\tau$.
\end{proof}

\subsection{The Group Algebra}
Suppose $A$ is an AM-exact left $\hkp$-comodule algebra and $A_K\cong \kK$. We will abuse notation and take $e_x=x, e_y=y$. We notice by associativity that if $v_i\in V(\mu_{a,b})$ and $v_j\in V(\mu_{a',b'})$, then
\begin{equation*}
    v_iv_j=\alpha_{ij}(1+by)(1+a'x).
\end{equation*}
We now show that any product $v_iv_j$ vanishing leads to contradiction.
\begin{lemma}\label{Grp ij0 Lem}
    If $v_iv_j=0$ for any $i,j$, then $A=A_K$.
\end{lemma}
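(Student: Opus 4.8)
The plan is to suppose for contradiction that $v_iv_j = 0$ for some $i,j$ while $A_2 \neq 0$, and derive that $A$ contains a proper $\hkp$-costable ideal, contradicting AM-exactness. The starting point is the structural formula from Remark \ref{vivj prod str rem}: writing $v_i \in V(\mu_{a,b})$ and $v_j \in V(\mu_{a',b'})$, associativity forces $v_iv_j = \alpha_{ij}(1+by)(1+a'x)$, so the hypothesis $v_iv_j = 0$ is precisely the statement $\alpha_{ij} = 0$. My first step is to unpack what $\alpha_{ij} = 0$ says about the other three products $v_iw_j$, $w_iv_j$, $w_iw_j$ via the unique solution recorded in the Remark: with all the $\beta,\gamma,\delta$ governed by the same grouplike-support constraints, each of these four products becomes a scalar multiple of one of $1, x, y, xy$ (with $\alpha_{ij}$ contributing the ``$1$''-component). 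Setting $\alpha_{ij}=0$ kills that component uniformly across all four products.

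Next I would multiply $v_iv_j = 0$ on the left and right by the elements $e_x, e_y$ (i.e.\ apply the invertible operators $L_x, L_y, R_x, R_y$, which by Lemma \ref{L and R V and W Lemma} shuffle $V_2$ and $W_2$). Because $L_x, L_y$ act invertibly and $R_x = $ ``$x$ acts'', these operations propagate the vanishing: one should obtain that $v_i' v_j' = 0$ for the images $v_i', v_j'$ in the various isotypic pieces, and in particular that the entire products $v_k v_\ell$, $v_k w_\ell$, etc., vanish whenever the indices lie in the relevant $\kK$-isotypic components. The key leverage is that $A_2$ is a single simple $2$-dimensional comodule summand per index (Lemma \ref{AKA2 Prod Lemma} and the isotypic decomposition $V_2 = \bigoplus V_2(\mu_{a,b})$), so once enough products vanish I can isolate a nonzero element $a \in A_2$ that is a zero-divisor: either $aA_2 = 0$ directly, or $a$ annihilates a complementary piece. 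Then $aA$ is a right ideal, and it is $\hkp$-costable by the same computation used repeatedly in this section, namely $\lambda(aA) \subset (g z\kK \otimes a)(\hkp \otimes A) \subset \hkp \otimes aA$ since $a \in A_2 = \lambda^{-1}(z\kK \otimes A)$. This contradicts right $\hkp$-simplicity unless $aA = 0$, and then Lemma \ref{Square Zero Lemma} (applied after checking all structure constants vanish) forces $A_2 = 0$, i.e.\ $A = A_K$.

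The main obstacle I anticipate is the bookkeeping in the middle step: translating ``$\alpha_{ij}=0$ for one pair'' into ``enough products vanish to produce a two-sided annihilator.'' Because $v_i$ and $v_j$ can lie in different isotypic components $V_2(\mu_{a,b})$, multiplying by $e_x, e_y$ moves between these components in a way that must be tracked carefully, and the argument may need to split on whether $V(\mu_{a,b})$ is nonzero for the various $(a,b)$. I expect the cleanest route is: from $\alpha_{ij} = 0$, use a triple product such as $v_i v_j w_k$ (as in the proof of Proposition \ref{AK Gen X/Y Prop}) to force $\alpha$-type constants for other index pairs to vanish, then iterate; once $\alpha_{k\ell} = 0$ for all $k,\ell$, the full system in Remark \ref{vivj prod str rem} combined with the $\kK$-equivariance of the $v_i$ should cascade $\beta, \gamma, \delta$ to zero as well. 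At that point all structure constants vanish, $A_2 A_2 = 0$, and Lemma \ref{Square Zero Lemma} finishes it. I would not expect this to require any genuinely new idea beyond the techniques already deployed in Propositions \ref{AK Gen X/Y Prop} and the $\GA{xy}$ cases — just a more careful application of the same zero-divisor-produces-a-costable-ideal principle.
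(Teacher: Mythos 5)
Your plan is essentially the paper's proof: from $v_iv_j=0$ you get $\alpha_{ij}=0$, and then associativity of triple products (the paper uses $v_kv_iv_j$ and $w_kv_iv_j$; your $v_iv_jw_k$ works the same way) propagates the vanishing to all structure constants, after which Lemma \ref{Square Zero Lemma} gives $A=A_K$. The only refinement worth making explicit is that in the group-algebra case all four constants $\beta_{ij},\gamma_{ij},\delta_{ij}$ are fixed sign multiples of $\alpha_{ij}$ (since $v_iv_j=\alpha_{ij}(1+by)(1+a'x)$), so killing the $\alpha$'s really does kill all products; otherwise the approach and the supporting lemmas you cite match the paper's argument.
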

\begin{proof}
    Suppose there exists $v_i\in V(\mu_{a,b}), v_j\in V(\mu_{a',b'})$ such that $v_iv_j=0$, then $\alpha_{ij}=0$. Notice then for any $k$, $w_kv_iv_j=v_kv_iv_j=0$, but by associativity:
    \begin{align*}
        0=v_kv_iv_j&=\alpha_{ki}(1+by)(1+a'x)v_j=\alpha_{ki}(1+bb')(v_j+a'xv_j)\\
        0=w_kv_iv_j&=-\alpha_{ki}(1-by)(1+a'x)v_j=-\alpha_{ki}(1-bb')(v_j+a'xv_j)
    \end{align*}
    Since $v_j, xv_j$ are linearly independent, (see Lemma \ref{L and R V and W Lemma}) it must be the case that both $\alpha_{ki}(1+bb')$ and $-\alpha_{ki}(1-bb')$ vanish. Thus $\alpha_{ki}=0$. By the same argument, for any $\ell$, $\alpha_{ki}=0$ implies $\alpha_{\ell k}=0$. Since $\ell,k$ are arbitrary, $A=A_K$ by Lemma \ref{Square Zero Lemma}.
\end{proof}

\begin{lemma}\label{Grp small dim Lem}
    For all $a,b\in\set{\pm 1}$, $V(\mu_{a,b})\leq 1$.
\end{lemma}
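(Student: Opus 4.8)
The plan is to show that $\dim V(\mu_{a,b})\ge 2$ would force $A=A_K$, and then observe that $A=A_K$ means $A_2=0$, which is a contradiction. So assume $A\neq A_K$ and, for some fixed pair $a,b\in\set{\pm 1}$, that $\dim V(\mu_{a,b})\ge 2$. I want to produce a nonzero element of $V(\mu_{a,b})$ whose square is zero and then feed it to Lemma~\ref{Grp ij0 Lem}.

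First I would set up the relevant bilinear gadget. The subspace $V(\mu_{a,b})\subset V_2$ is the intersection of the $a$-eigenspace of $R_x$ with the $b$-eigenspace of $L_y$, and bilinearity of multiplication together with the identity $v_iv_j=\alpha_{ij}(1+by)(1+ax)$ (valid for $v_i,v_j\in V(\mu_{a,b})$, since then $a'=a$ and $b'=b$) shows that there is a bilinear form $\alpha(-,-)\colon V(\mu_{a,b})\times V(\mu_{a,b})\to\kk$ with $uu'=\alpha(u,u')(1+by)(1+ax)$ for all $u,u'\in V(\mu_{a,b})$. Since $(1+by)(1+ax)=1+ax+by+abxy$ is a nonzero element of $A_K$, we have $uu'=0$ if and only if $\alpha(u,u')=0$.

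Next I would manufacture the square-zero vector. Choose linearly independent $v_1,v_2\in V(\mu_{a,b})$. If $\alpha(v_2,v_2)=0$, then $v_2^2=0$ already. Otherwise the polynomial $q(t):=\alpha(v_2,v_2)\,t^2-\bigl(\alpha(v_1,v_2)+\alpha(v_2,v_1)\bigr)t+\alpha(v_1,v_1)$ has degree $2$, hence (as $\kk$ is algebraically closed) a root $c_0\in\kk$; then $u:=v_1-c_0v_2$ is a nonzero element of $V(\mu_{a,b})$, and expanding $u^2=v_1^2-c_0(v_1v_2+v_2v_1)+c_0^2v_2^2$ gives $u^2=q(c_0)(1+by)(1+ax)=0$. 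In either case we have produced $0\neq u\in V_2$ with $u^2=0$. This is the only step where algebraic closedness of $\kk$ is essential — it is exactly the fact that a binary quadratic form over an algebraically closed field is isotropic — and it is the only real content of the argument; everything else is bookkeeping.

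Finally, I would extend $u$ to a basis of $V_2$; this is legitimate by the basis-independence discussion around Remark~\ref{vivj prod str rem}. With respect to this basis $u$ is a basis vector with $u\cdot u=0$, so Lemma~\ref{Grp ij0 Lem} yields $A=A_K$, hence $A_2=0$, contradicting $0\neq u\in A_2$. Therefore $\dim V(\mu_{a,b})\le 1$ for every $a,b$. The one place to be careful is precisely this last move — ensuring $u$ may be taken as a basis vector of $V_2$ so that Lemma~\ref{Grp ij0 Lem} applies verbatim — but given the remark on basis-independence of the vanishing of structure constants, this is immediate.
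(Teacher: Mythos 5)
Your proof is correct and follows essentially the same route as the paper: both arguments exploit the fact that all products of elements of $V(\mu_{a,b})$ land in the one-dimensional span of $(1+by)(1+ax)$, produce a vanishing product after a change of basis, and then invoke Lemma~\ref{Grp ij0 Lem}. The only difference is cosmetic: the paper observes directly that $v_1^2$ and $v_1v_2$ are linearly dependent and so $v_1(v_1-cv_2)=0$ for some scalar $c$, whereas you take the slightly longer path of solving a quadratic (using algebraic closedness) to manufacture a square-zero element $u$ with $u\cdot u=0$; both vanishing products feed into Lemma~\ref{Grp ij0 Lem} identically.
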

\begin{proof}
    Suppose on the contrary, there exists  linearly independent $v_1,v_2\in V(\mu_{a,b})$. Then $v_1^2$ and $v_1v_2$ are linearly dependent and there exists non-zero scalar $c\in\kk$ such that $v_1^2-cv_1v_2=0$. We notice the vectors $v_1'=v_1, v_2'=v_1-cv_2$ extends to a basis where $v_1'v_2'=0$ and by Lemma \ref{Grp ij0 Lem} we derive a contradiction.
\end{proof}
We notice if $0\neq v\in V(\mu_{a,b})$, then $0\neq xv\in W_2$ by Lemma \ref{L and R V and W Lemma}. Further, since $x$ commutes with $y$, $xv\in W(\mu_{a,b})$. We notice that this observation, along with Lemma \ref{Grp small dim Lem}, implies that if $A_K=\kK$, then $A$ is isomorphic as a comodule to one of 
\begin{itemize}
    \item $\kK$,
    \item $\kK\oplus A_2^+$, ($\kK\oplus A_2^-$), or 
    \item $\kK\oplus A_2^+\oplus A_2^-$
\end{itemize}
where
\begin{equation*}
    A_2^+:=\bigoplus_{ab=1} V(\mu_{a,b})\oplus W(\mu_{a,b}),\text{ and}, A_2^-:=\bigoplus_{ab=-1} V(\mu_{a,b})\oplus W(\mu_{a,b}).
\end{equation*}
We see now that the last case is impossible.
\begin{lemma}\label{Grp 8d Lem}
    If $A$ is an AM-exact $\hkp$-comodule algebra with $A\neq A_K$, then $A$ is at most eight dimensional.
\end{lemma}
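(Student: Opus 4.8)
The plan is to run the case analysis on the isomorphism type of $A_K$ supplied by Proposition~\ref{AK Struct Prop}, the only substantial case being $A_K\cong\kK$, which is the standing hypothesis of this subsection. Since $A\neq A_K$, Proposition~\ref{AK Gen X/Y Prop} excludes $A_K\in\{\kk,\GA{x},\GA{y}\}$, and if $A_K=\GA{xy}$ then $n_2\leq 1$ forces $\dim A\leq 4$; so assume $A_K\cong\kK$. By the discussion preceding the lemma, $A$ is then $\hkp$-comodule isomorphic to $\kK$, to $\kK\oplus A_2^{+}$, to $\kK\oplus A_2^{-}$, or to $\kK\oplus A_2^{+}\oplus A_2^{-}$. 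Each eigenspace $V(\mu_{a,b})$ is at most one-dimensional by Lemma~\ref{Grp small dim Lem}, and Lemma~\ref{Tau Dim Lem} matches the dimensions of the $W$-summands of $A_2^{\pm}$ with those of the $V$-summands, so $\dim A_2^{\pm}\leq 4$; hence the first three cases already give $\dim A\leq 8$. It therefore suffices to rule out the last comodule type, equivalently to show that $A_2^{+}$ and $A_2^{-}$ cannot both be nonzero, equivalently (again by Lemma~\ref{Tau Dim Lem}) that one cannot simultaneously have nonzero eigenvectors $v\in V(\mu_{a,a})$ and $u\in V(\mu_{c,-c})$ for any $a,c\in\{\pm1\}$.

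Suppose such $v,u$ exist. By Lemma~\ref{Grp ij0 Lem} none of the products $u^2,vu,uv$ vanishes, so, specialising Remark~\ref{vivj prod str rem} to eigenvectors (exactly as in the formula $v_iv_j=\alpha_{ij}(1+b_iy)(1+a_jx)$ recorded just before Lemma~\ref{Grp ij0 Lem}), we may write
\begin{equation*}
u^2=\alpha_{uu}(1-cy)(1+cx),\qquad vu=\alpha_{vu}(1+ay)(1+cx),\qquad uv=\alpha_{uv}(1-cy)(1+ax)
\end{equation*}
with $\alpha_{uu},\alpha_{vu},\alpha_{uv}\in\kk^{\times}$. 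If $a=-c$, then $1+ay=1-cy$, so $vu=\lambda u^2$ with $\lambda:=\alpha_{vu}\alpha_{uu}^{-1}\neq 0$, while a direct computation — left-multiplying the displayed expression for $u^2$ by $v$ and using the eigenvalue relations for $v$ together with Lemma~\ref{L and R V and W Lemma} — gives $v\,u^2=0$; hence $u^3=\lambda^{-1}(vu)u=\lambda^{-1}v(u^2)=0$ by associativity. If instead $a=c$, the symmetric computation (left and right multiplication interchanged) gives $uv=\lambda u^2$ for some $\lambda\neq 0$ and $u^2\,v=0$, whence $u^3=\lambda^{-1}u(uv)=\lambda^{-1}(u^2)v=0$. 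In either branch $u^3=0$. On the other hand, computing $u^3=u\cdot u^2$ directly from the formula for $u^2$ yields $u^3=2\alpha_{uu}\bigl(u-c\,R_y(u)\bigr)$, whose $V_2$-component $2\alpha_{uu}u$ is nonzero since $u\neq 0$; this contradiction forces one of $A_2^{+},A_2^{-}$ to vanish, and then $\dim A\leq 8$ by the first paragraph.

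The routine parts are the evaluation of the four products $v_iv_j,v_iw_j,w_iv_j,w_iw_j$ from Remark~\ref{vivj prod str rem} on eigenvectors and the bookkeeping, via Lemma~\ref{L and R V and W Lemma}, of how $L_x,R_x,L_y,R_y$ permute the eigenspaces $V(\mu_{a,b})$ and $W(\mu_{a,b})$. The one step that needs care is identifying \emph{which} triple product degenerates: $v\,u^2$ vanishes precisely when $a=-c$ whereas $u^2\,v$ vanishes precisely when $a=c$, so the argument must branch on the relative sign of the $R_x$-eigenvalues of $v$ and $u$ and, in each branch, pair the vanishing triple product with the matching factorisation ($vu\propto u^2$ when $a=-c$, $uv\propto u^2$ when $a=c$) in order to route the vanishing through associativity into $u^3$, which is seen to be nonzero by a final direct computation.
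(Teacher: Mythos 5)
Your argument is correct and follows essentially the same route as the paper: after reducing to the coexistence of nonzero eigenvectors in $A_2^{+}$ and $A_2^{-}$ (via Lemma \ref{Grp small dim Lem} and Lemma \ref{Tau Dim Lem}), both proofs derive a contradiction from the associativity of a degree-three product of such eigenvectors, evaluated with the formula $v_iv_j=\alpha_{ij}(1+by)(1+a'x)$ and closed off by Lemma \ref{Grp ij0 Lem}. The paper compares $(v_1^2)v_2$ with $v_1(v_1v_2)$ to force $\alpha_{12}=0$ directly, whereas you route the same mechanism through $u^3$ computed two ways; this is a cosmetic rather than substantive difference.
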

\begin{proof}
    In particular, we will see that if $A\cong A_K\oplus A_2^+\oplus A_2^-$, then we derive a contradiction.

    Suppose $A=A_K\oplus A^+_2\oplus A^-_2$. Then there exists non-zero $v_1\in V(\mu_{1,1})$ and $v_2\in V(\mu_{-1,1})$. Now
    \begin{align*}
        (v_1^2)v_2&=\alpha_{11}(1+x+y+xy)v_2=\alpha_{11}(v_2+xv_2-v_2-xv_2)=0,\\
        v_1(v_1v_2)&=\alpha_{12} v_1(1+x+y+xy)=\alpha_{12}(2v_1+2v_1y)
    \end{align*}
    and by associativity, these expressions must be equal. Since $v_1y\subset W$ by Lemma \ref{L and R V and W Lemma}, $v_1+v_1y\neq 0$ and so $\alpha_{12}=0$. Thus by Lemma \ref{Grp ij0 Lem}, $A=A_K$, a contradiction.
\end{proof}

\begin{proposition}\label{Grp H8 Prop}
    If $A$ is an AM-exact $\hkp$-comodule algebra with $A_K=\kK$ and $A\neq A_K$, then $A$ is isomorphic to $\hkp$ as $\hkp$-comodule algebras.
\end{proposition}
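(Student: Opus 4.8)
The plan is to follow the strategy of the smaller cases in this section: first pin down the $\hkp$-comodule type of $A$, then determine every product of basis vectors from associativity, from the requirement that the coaction is an algebra map, and from Remark~\ref{vivj prod str rem}, and finally normalise the basis so that the resulting multiplication table is exactly that of $\hkp$.

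First I would settle the comodule type of $A$. Since $A\neq A_K$ we have $A_2\neq 0$, so by the discussion preceding this proposition (Lemmas~\ref{Grp small dim Lem}, \ref{Tau Dim Lem} and \ref{Grp 8d Lem}) $A$ is, as an $\hkp$-comodule, either $\kK\oplus A_2^+$ or $\kK\oplus A_2^-$. Replacing the chosen square root $e_x$ by $-e_x$ is again an admissible choice (it is grouplike and squares to $1$) and it negates $R_x$, hence relabels $V(\mu_{a,b})\leftrightarrow V(\mu_{-a,b})$ and interchanges $A_2^+$ with $A_2^-$; so I may assume $A=\kK\oplus A_2^+$. Next I would show $\dim A_2^+=4$. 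If not, then exactly one of $V(\mu_{1,1}),V(\mu_{-1,-1})$ is nonzero, say $V(\mu_{1,1})=\langle v_1\rangle$ and $V(\mu_{-1,-1})=0$, and then $W_2=\langle\tau(v_1)\rangle$ with $\tau(v_1)\in W(\mu_{-1,-1})$ by Lemma~\ref{Tau Dim Lem}. But $L_x$ (left multiplication by $e_x$) is invertible and carries $W_2$ onto $V_2$ by Lemma~\ref{L and R V and W Lemma}, while the commutations $R_xL_x=L_xR_x$ and $L_yL_x=L_xL_y$ force $L_x(\tau(v_1))\in V_2(\mu_{-1,-1})=0$, a contradiction. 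Hence $\dim A=8$ and $A\cong\kK\oplus X^2\cong\hkp$ as $\hkp$-comodules.

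It remains to identify the algebra structure. Fix a basis $v_1\in V(\mu_{1,1})$, $v_2\in V(\mu_{-1,-1})$ of $V_2$ and set $w_i=\tau(v_i)$, so $w_1\in W(\mu_{-1,-1})$, $w_2\in W(\mu_{1,1})$. Because $A_K=\kK$, associativity gives $v_iv_j=\alpha_{ij}(1+b_iy)(1+a_jx)$, where $(a_i,b_i)$ is the $\mu$-type of $v_i$; Lemma~\ref{Grp ij0 Lem} gives $\alpha_{ij}\neq 0$ for all $i,j$, and Remark~\ref{vivj prod str rem} then determines $v_iw_j$, $w_iv_j$ and $w_iw_j$. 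Each of $L_x,L_y,R_x,R_y$ is multiplication by a grouplike, hence invertible, and by Lemma~\ref{L and R V and W Lemma} permutes $\{v_1,v_2,w_1,w_2\}$ up to scalars; write $e_xv_1=\kappa_1w_2$ and $e_xv_2=\kappa_2w_1$. Using associativity one extracts relations such as $v_1v_2=(v_1e_x)v_2=v_1(e_xv_2)=\kappa_2\,v_1w_1$, hence $\alpha_{12}=\pm\kappa_2\alpha_{11}$ and, similarly, $\alpha_{21}=\pm\kappa_1\alpha_{22}$; comparing the two ways of computing $\lambda_A(e_xv_1)$ (namely $\kappa_1\lambda_A(w_2)$ versus pushing $\lambda_A(v_1)$ through left multiplication by $e_x$), together with $L_x^2=\id$, forces $\kappa_1\kappa_2=1$; and associativity of triple products such as $(v_1v_2)v_1=v_1(v_2v_1)$ pins down the remaining dependencies among the $\alpha_{ij}$. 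The only freedom left is rescaling $v_1$ and $v_2$ (equivalently $w_1,w_2$); choosing the scales so that $\alpha_{11}=1$ and $\kappa_1=1$ makes the entire multiplication table coincide with that of $\hkp$ written in the basis $\{1,x,y,xy,\,z+zx,\,z-zx,\,zy-zxy,\,zy+zxy\}$. Sending $e_x\mapsto x$, $e_y\mapsto y$ and $v_i,w_i$ to the corresponding normalised elements of $\hkp$ is then simultaneously an isomorphism of algebras and of comodules, so $A\cong\hkp$ as $\hkp$-comodule algebras.

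I expect the main obstacle to be the bookkeeping in the last step: there are several scalars ($\alpha_{11},\alpha_{22},\alpha_{12}$, the multipliers $\kappa_1,\kappa_2$, and the analogous multipliers attached to $L_y,R_y,R_x$), and the associativity and coaction constraints must be marshalled carefully both to deduce the needed identities ($\kappa_1\kappa_2=1$ and the exact dependence of the $\alpha_{ij}$ on the $\kappa_i$) and to check that, after the permitted rescaling, the algebra reproduces the defining relations of $\hkp$ on the nose---in particular the normalisation $z^2=\tfrac12(1+x+y-xy)$ and the precise coaction of the element playing the role of $z$---so that the constructed map is genuinely an isomorphism of $\hkp$-comodule algebras and not merely of abstract algebras.
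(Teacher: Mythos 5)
Your proposal is correct and follows essentially the same route as the paper: reduce to $A=\kK\oplus A_2^+$ (the paper performs the same sign change $x\mapsto -x$, just at the end rather than the start), fix $v_1\in V(\mu_{1,1})$ and $v_2\in V(\mu_{-1,-1})$, use associativity of triple products together with the compatibility of $\lambda$ with multiplication to pin down the structure constants up to rescaling of $v_1,v_2$, and then exhibit the explicit isomorphism onto the span of $(1\pm y)z$ and $x(1\pm y)z$ inside $\hkp$. The only substantive addition is your explicit verification that $\dim A_2^+=4$, which the paper leaves implicit in the dimension discussion preceding Lemma \ref{Grp 8d Lem}.
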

\begin{proof}
    Lemma \ref{Grp 8d Lem} implies $A$ is either of the necessarily of the form $A_K\oplus A_2^+$ or $A_K\oplus A_2^-$.
    
    Suppose $A=A_K\oplus A_2^+$ with $v_1\in V(\mu_{1,1}), v_2\in V(\mu_{-1,-1}), w_1\in W(\mu_{-1,-1}), w_2\in W(\mu_{1,1})$. We notice that the algebra structure on $A$ is determined by the choices of $\alpha_{11},\alpha_{12},\alpha_{21},\alpha_{22}$. Notice without loss of generality we can assume $\alpha_{11}=1$ and $\alpha_{12}=-1$ by scaling $\set{v_1,w_1}$ and $\set{v_2,w_2}$. We make this choice so that $v_1^2=v_1w_2$.
    \\[5px]
    Consider the following computations:
    \begin{align*}
        (v_1v_2)v_1&=-(1+y)(1-x)v_1=2(1-x)v_1\\
        v_1(v_2v_1)&=\alpha_{21}v_1(1-y)(1+x)=\alpha_{21}2v_1(1-y)
    \end{align*}
    Since $v_1$ is linearly independent from both $xv_1$ and $v_1y$ (Lemma \ref{L and R V and W Lemma}), it must be the case that $\alpha_{21}=-1$. We also see that $xv_1=v_1y$. In a similar manner, a comparison of $(v_2v_1)v_2$ and $v_2(v_1v_2)$ shows $xv_2=v_2y$. Further, Lemma \ref{L and R V and W Lemma} implies $xv_1=v_1y=c w_2$ for some non-zero $c\in \kk$. Now we notice:
    \begin{equation*}
        v_1v_2=v_1^2=(v_1x)(xv_1)=c v_1w_2
    \end{equation*}
    and so $c=1$. Now we see that
    \begin{equation*}
        w_2^2=(v_1y)(xv_1)=(v_1x)(yv_1)=v_1^2.
    \end{equation*}
    Notice this implies $\alpha_{22}=-1$ (see Remark \ref{vivj prod str rem}). Now that we have determined the structure coefficients, we can see the map $f:A\to H_8$ defined by $f(g)=g$ for $g\in K$ and
    \begin{align*}
        f(v_1)&=yz+z& f(v_2)&=yz-z\\
        f(w_1)&=xyz-xz& f(w_2)&=xyz+xz
    \end{align*}
    is an isomorphism of $\hkp$ comodule algebras.

    We notice that the choice of $x$ (or $y$) is only unique up to $\pm 1$. By exchanging $x$ for $-x$, we see that $A_K\oplus A_2^-\cong A_K\oplus A_2^+$ as $H$-comodules.
\end{proof}

\subsection{The Twisted Group Algebra}
Suppose $A$ is an AM-exact $\hkp$-comodule algebra and $A_K\cong \kkpsiK$. Again, we take a basis $\set{e_g}_{g\in K}$ such that $e_ge_h=\psi(g,h)e_{gh}$ where $\psi(x^iy^j,x^ky^\ell)=(-1)^{jk}$. We notice if $v_i\in V(\mu_{a,b})$ and $v_j\in V(\mu_{a',b'})$, then
\begin{equation*}
    v_iv_j=\alpha_{ij}(1+be_y)(1+a'e_x)
\end{equation*}
Now we will show the vanishing of $v_iv_j$ leads to contradiction
\begin{lemma}
    If $v_iv_j=0$ for any $i,j$ then $A=A_K$.
\end{lemma}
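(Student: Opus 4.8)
The plan is to mirror the proof of Lemma \ref{Grp ij0 Lem}, replacing the commutative product identities used there by their $\kkpsiK$-analogues. Throughout I work with the chosen adapted basis $\{v_i\}$ of $V_2$, writing $v_i\in V(\mu_{a_i,b_i})$, so that $v_pv_q=\alpha_{pq}(1+b_pe_y)(1+a_qe_x)$ for all $p,q$. The first step is to observe that $v_iv_j=0$ already forces $\alpha_{ij}=0$: in the basis $\{1,e_x,e_y,e_{xy}\}$ of $A_K$ the element $(1+b_ie_y)(1+a_je_x)$ has coefficient $1$ on the unit, since the only term of its expansion lying in $A_1$ is $1\cdot 1$ (note $e_ye_x=-e_{xy}\in A_{xy}$); hence it is nonzero, and $\alpha_{ij}(1+b_ie_y)(1+a_je_x)=0$ gives $\alpha_{ij}=0$.

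Next I would fix an arbitrary index $k$ and exploit the triple products $v_kv_iv_j$ and $w_kv_iv_j$, both of which vanish because $v_iv_j=0$. Expanding $v_kv_i$ and reading off Remark \ref{vivj prod str rem} gives $w_kv_i=-\alpha_{ki}(1-b_ke_y)(1+a_ie_x)$, so associativity turns the two vanishing triple products into
\[
\alpha_{ki}(1+b_ke_y)(1+a_ie_x)v_j=0,\qquad \alpha_{ki}(1-b_ke_y)(1+a_ie_x)v_j=0.
\]
Now $(1+a_ie_x)v_j=v_j+a_i\,e_xv_j$ with $v_j\in V_2$ and $0\neq e_xv_j=L_x(v_j)\in W_2$ by Lemma \ref{L and R V and W Lemma} and the invertibility of $L_x$, while left multiplication $L_y$ by $e_y$ preserves both $V_2$ and $W_2$ (as $e_y^2=1$ makes $L_y$ bijective and $L_y(V_2)=V_2$) with $L_y(v_j)=b_jv_j$. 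Projecting both equations onto $V_2$ therefore yields the scalar relations $\alpha_{ki}(1+b_kb_j)=0$ and $\alpha_{ki}(1-b_kb_j)=0$; adding them gives $2\alpha_{ki}=0$, so $\alpha_{ki}=0$ for every $k$.

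To finish, I would propagate as in Lemma \ref{Grp ij0 Lem}: $\alpha_{ki}=0$ for all $k$ means $v_kv_i=0$ for all $k$, and applying the step above with $(v_k,v_i)$ in the role of $(v_i,v_j)$ gives $\alpha_{\ell k}=0$ for all $\ell$; since $k$ already ranged over every index, all structure constants $\alpha_{pq}$ vanish. Because $v_pv_q=\alpha_{pq}(1+b_pe_y)(1+a_qe_x)$ and the products $v_pw_q,w_pv_q,w_pw_q$ are governed by the same scalars (Remark \ref{vivj prod str rem}), this forces $A_2A_2=0$, whence $A=A_K$ by Lemma \ref{Square Zero Lemma}. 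The only genuine point of care relative to the group-algebra case is bookkeeping the noncommutativity of $A_K$ — keeping $e_y$ on the left and $e_x$ on the right and using $e_ye_x=-e_{xy}$ — but since the decisive computation only inspects the $V_2$-component, these sign subtleties do not obstruct the argument.
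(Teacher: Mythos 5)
Your proof is correct and follows essentially the same route as the paper: associativity of the triple product $v_kv_iv_j=0$ forces $\alpha_{ki}=0$ for every $k$, the vanishing propagates to all structure constants, and Lemma \ref{Square Zero Lemma} finishes. The only cosmetic difference is that the paper extracts $\alpha_{ki}=0$ from the single product $v_kv_iv_j$, using that exactly one of $1\pm b_kb_j$ is nonzero together with the linear independence of $v_j$ and $e_xv_j$, whereas you pair it with $w_kv_iv_j$ and add the two projected equations.
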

\begin{proof}
    Take $v_i\in V(\mu_{a,b}), v_j\in V(\mu_{c,d})$, and $v_k\in V(\mu_{e,f})$. Then $v_kv_iv_j=0$. By associativity, we have:
    \begin{align*}
        0&=v_kv_iv_j\\
        &=\alpha_{ki}(1+fe_y)(1+ae_x)v_j\\
        &=\alpha_{ki}(1+fe_y+ae_x-afe_xe_y)v_j\\
        &=\alpha_{ki}(1+cf+ae_x-acfe_x)v_j\\
        &=\alpha_{ki}(1+cf)v_j+\alpha_{ki}a(1-cf)e_xv_j
    \end{align*}
    Notice one of $(1+cf)$ and $(1-fc)$ fails to vanish. In either case it is necessary for $\alpha_{ki}=0$. A similar argument would tell us that $\alpha_{ki}=0$ implies $\alpha_{\ell k}=0$. Since $\ell$ and $k$ were arbitrary, all of the structure constants vanish and so $A=A_K$ by Lemma \ref{Square Zero Lemma}.
\end{proof}
We now notice that if $v_i\in V(\mu_{a,b})$, then $v_ie_y\in W$ by Lemma \ref{L and R V and W Lemma}. Further we see:
\begin{align*}
    (v_ie_y)e_x&=-(v_ie_x)e_y=-av_ie_y\\
    e_y(v_i e_y)&=(e_y v_i)e_y=bv_ie_y
\end{align*}
which implies $\dim(V(\mu_{a,b}))=\dim(V(\mu_{-a,b}))$ for all $a,b$. This fact, together with Lemma \ref{Tau Dim Lem}, implies $\dim(V(\mu_{a,b}))=\dim(W(\mu_{c,d}))$ for all $a,b,c,d$.

\begin{proposition}\label{Twist no ext Prop}
    If $A$ is an AM-exact $\hkp$-comodule algebra with $A_K\cong \kkpsiK$, then $A=A_K$.
\end{proposition}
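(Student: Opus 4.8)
The plan is to argue by contradiction, assuming $A\neq A_K$ and violating associativity. Two inputs assembled just above do almost all of the work. First, the lemma immediately preceding this statement shows that if $v_iv_j=0$ for even a single pair, then $A=A_K$; so it suffices to force a single structure constant $\alpha_{ij}$ to vanish. Second, the dimension count carried out right before the statement shows that \emph{all} of the $\kK$-isotypic components $V(\mu_{a,b})$ (and likewise the $W(\mu_{a,b})$) have one common dimension. Hence if $A\neq A_K$ then $V_2\neq 0$, and consequently \emph{every} $V(\mu_{a,b})$ is nonzero. This is exactly the point where the twisted case departs from the group-algebra case of Proposition \ref{Grp H8 Prop}: there only the components with $ab=1$ (or only those with $ab=-1$) survive, which is what permits $\hkp$ itself to occur; here equidimensionality removes that option.

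Assuming $A\neq A_K$, I would pick nonzero $v_1\in V(\mu_{1,1})$ and $v_2\in V(\mu_{-1,1})$ (available by the previous paragraph) and evaluate the associator of $v_1,v_1,v_2$. Using $v_1v_1=\alpha_{11}(1+e_y)(1+e_x)$ together with $e_xv_2=L_x(v_2)\in W_2$, $e_yv_2=v_2$ and $e_yL_x(v_2)=-L_x(v_2)$ (the last because $e_ye_x=-e_xe_y$ and $e_yv_2=v_2$), one computes $(v_1v_1)v_2=2\alpha_{11}v_2\in V_2$. On the other hand $v_1v_2=\alpha_{12}(1+e_y)(1-e_x)$ — the minus sign on $e_x$ being forced by $v_2\in V(\mu_{-1,1})$ — and using $v_1e_y=R_y(v_1)\in W_2$ with $R_y(v_1)e_x=R_x(R_y(v_1))=-R_y(v_1)$ (again the cocycle: $e_xe_y=-e_ye_x$), one gets $v_1(v_1v_2)=2\alpha_{12}R_y(v_1)\in W_2$. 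Since $A_2=V_2\oplus W_2$, associativity forces $\alpha_{11}v_2=0$, hence $\alpha_{11}=0$, hence $v_1^2=0$; the preceding lemma then yields $A=A_K$, contradicting our assumption. Therefore $A=A_K$.

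The individual calculations are of the same routine type already rehearsed repeatedly in this section (compare Lemma \ref{Grp 8d Lem}), and I would not belabour them. The one place demanding care is keeping track of which of the two summands $V_2$, $W_2$ each of the triple products $(v_1v_1)v_2$ and $v_1(v_1v_2)$ lands in, since the entire contradiction rests on their landing in \emph{different} summands — a phenomenon produced by choosing $v_2$ in an $R_x$-eigenspace opposite to that of $v_1$ and by the anticommutation $e_xe_y=-e_ye_x$ coming from $\psi$. The genuine conceptual step, and the one I would flag as carrying the statement, is the observation that once $A\neq A_K$ all four components $V(\mu_{a,b})$ are simultaneously nonzero, so that such a $v_2$ actually exists; the failure of this in the untwisted setting is precisely why $\hkp$ appears there but not here.
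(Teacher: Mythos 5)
Your proof is correct, and it shares the paper's skeleton exactly: assume $A\neq A_K$, use the equidimensionality of the isotypic components to produce nonzero $v_1\in V(\mu_{1,1})$ and $v_2\in V(\mu_{-1,1})$, force a product of basis vectors in $V_2$ to vanish, and then conclude via the preceding lemma and Lemma \ref{Square Zero Lemma}. The one genuinely different step is how the vanishing is produced. The paper observes that $v_1^2=\alpha_{11}(1+e_y)(1+e_x)$ and $v_2v_1=\alpha_{21}(1+e_y)(1+e_x)$ are scalar multiples of the \emph{same} element of $A_K$, hence linearly dependent, so $(v_1-cv_2)v_1=0$ for a suitable $c$, and then extends $\set{v_1,v_1-cv_2}$ to a basis of $V_2$. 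You instead evaluate the associator of $(v_1,v_1,v_2)$ and show $(v_1v_1)v_2=2\alpha_{11}v_2\in V_2$ while $v_1(v_1v_2)=2\alpha_{12}R_y(v_1)\in W_2$, so $\alpha_{11}=0$ directly; I have checked the signs (in particular $e_ye_x=-e_xe_y$ and the placement of each term in $V_2$ or $W_2$ via Lemma \ref{L and R V and W Lemma}) and the computation is right. The paper's route is shorter on the page; yours has the modest advantage that the vanishing product $v_1^2=0$ involves an honest $R_x,L_y$-eigenvector, so the preceding lemma (whose proof is written for a basis adapted to the eigenspace decomposition) applies verbatim, whereas the paper's auxiliary vector $v_1-cv_2$ is not an eigenvector and one must note that the lemma still applies after a change of basis. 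Your framing of where the argument diverges from the group-algebra case --- equidimensionality forcing all four components $V(\mu_{a,b})$ to be simultaneously nonzero --- is also exactly the point the paper is (tersely) making.
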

\begin{proof}
    Suppose $A\neq A_K$. Then $\dim(V(\mu_{a,b}))>1$ for all $a,b$. Let $v_1$ be a non-zero vector in $V(\mu_{1,1})$ and $v_2$ a non-zero vector in $V(\mu_{-1,1})$. Then we see that $v_1^2$ and $v_2v_1$ are linearly dependent. That is there exists a non-zero scalar $c\in\kk$ such that $v_1^2-cv_2v_1=0$. The structure constants for any basis extending the linearly independent set $\set{v_1, v_1-cv_2}$ must all vanish and so Lemma \ref{Square Zero Lemma} implies the result.
\end{proof}
This completes the classification of AM-exact $\hkp$-comodule algebras.

\begin{theorem}\label{hkp class thm}
    If $A$ is an AM-exact $\hkp$-comodule algebra then $A$ is isomorphic to either a coideal subalgebra of $\hkp$ or the twisted group algebra $\kkpsiK$.
\end{theorem}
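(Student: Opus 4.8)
The plan is to assemble the theorem from the case analysis of this section; the genuine work has already been done in the preceding propositions, and what remains is bookkeeping together with the observation that each exceptional algebra other than $\kkpsiK$ occurs as a coideal subalgebra of $\hkp$. First I would record the decomposition $A=A_K\oplus A_2$: since $\hkp$ is cosemisimple with simple comodules the one-dimensional $\kk_g$ (for $g\in K$) together with the two-dimensional comodule $X$, every $\hkp$-comodule algebra splits this way, with $A_K=\lambda^{-1}(\kK\tensor A)$ the sum of the one-dimensional isotypic components and $A_2=\lambda^{-1}(z\kK\tensor A)$ the sum of the copies of $X$, and by Lemma \ref{AKA2 Prod Lemma} the piece $A_K$ is an $\hkp$-subcomodule algebra. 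When $A$ is AM-exact, Proposition \ref{AK Struct Prop} then forces $A_K$ to be one of $\kk$, $\GA{x}$, $\GA{y}$, $\GA{xy}$, $\kK$, or $\kkpsiK$, and the strategy is to run through these six possibilities.

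For $A_K\in\set{\kk,\GA{x},\GA{y}}$, Proposition \ref{AK Gen X/Y Prop} gives $A=A_K$, and each of these is visibly a (homogeneous) coideal subalgebra of $\hkp$. For $A_K=\GA{xy}$, the preceding proposition bounds the multiplicity of $X$ by one; if $A=A_K$ we are again done, and otherwise Proposition \ref{gamma coideal Prop} identifies $A$ with $A_{xy}^\gamma$, which was shown there to be isomorphic to the coideal subalgebra of $\hkp$ generated by $z+\gamma yz$.

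For $A_K=\kK$, Proposition \ref{Grp H8 Prop} shows that either $A=A_K=\kK$, which is the Klein-four Hopf subalgebra of $\hkp$ and hence a coideal subalgebra, or $A\cong\hkp$, the whole algebra and so trivially a coideal subalgebra. Finally, for $A_K=\kkpsiK$, Proposition \ref{Twist no ext Prop} gives $A=A_K=\kkpsiK$. Collecting the cases, every AM-exact $\hkp$-comodule algebra is isomorphic to one of $\kk$, $\GA{x}$, $\GA{y}$, $\GA{xy}$, $A_{xy}^\gamma$, $\kK$, $\hkp$, or $\kkpsiK$, and all but the last are coideal subalgebras of $\hkp$, which is the assertion.

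To finish I would note that $\kkpsiK$ really does lie outside the list of coideal subalgebras: as an $\hkp$-comodule it is $\bigoplus_{g\in K}\kk_g$, and the only coideal subalgebra with that comodule type is the untwisted $\kK$, so the nontrivial cocycle $\psi$ genuinely separates it. I do not expect any obstacle at this final stage, since it is pure assembly; had everything been proved from scratch, the hard step would have been the $A_K=\kK$ analysis underlying Proposition \ref{Grp H8 Prop}, where one must pin down all four structure constants and explicitly construct the comodule-algebra isomorphism $A\cong\hkp$.
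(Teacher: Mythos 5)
Your proposal is correct and follows essentially the same route as the paper, whose proof is simply the citation of Propositions \ref{AK Struct Prop}, \ref{AK Gen X/Y Prop}, \ref{gamma coideal Prop}, \ref{Grp H8 Prop}, and \ref{Twist no ext Prop}; your write-up just makes the case-by-case assembly explicit. The closing observation that $\kkpsiK$ is not itself a coideal subalgebra is a harmless extra, since the theorem is stated as a disjunction and does not require it.
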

\begin{proof}
    This follows from Proposition \ref{AK Struct Prop}, Proposition \ref{AK Gen X/Y Prop}, Proposition \ref{gamma coideal Prop}, Proposition \ref{Grp H8 Prop}, and Proposition \ref{Twist no ext Prop}.
\end{proof}

\subsection{$\hkp$-Equivariant Morita Equivalence}\label{HKP Morita Sub}

Recall ${}_\hkp\M$ is semisimple with five non-isomorphic simple objects. There are four one dimensional $\hkp$-modules,  $\set{\kk_\zeta}_{\zeta:\zeta^4=1}$ where $z$ acts by $\zeta$ on $\kk_\zeta$. The last simple object is the two dimensional representation which we will denote $W$. We can easily see that some of the algebras above are not Morita equivalent by observing the actions of the simple objects on the corresponding module categories.

In particular, if $A\simeq_\hkp B$ then $A$ and $B$ are Morita equivalent as $\kk$-algebras. Thus we will only investigate pairs from Theorem \ref{hkp class thm} that are Morita equivalent as $\kk$-algebras. In particular, we notice the algebra $\kkpsiK$ is isomorphic to $M_2(\kk)$. The isomorphism is realized by $e_x\mapsto e_{11}-e_{22}$ and $e_y\mapsto e_{12}+e_{21}$. In particular, ${}_\kkpsiK\M$ is semisimple with one simple two dimensional representation and $\kkpsiK$ is Morita equivalent as $\kk$-algebras to $\kk$. The next proposition shows that $\kk\simeq_{\hkp}\kkpsiK$.

\begin{proposition}\label{Twist Morita Prop}
    The algebras $\kk, \kkpsiK$ are $\hkp$-equivariant Morita equivalent.
\end{proposition}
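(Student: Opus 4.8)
The plan is to invoke Theorem~\ref{End P Thm} with $A=\kk$ and $B=\kkpsiK$: it suffices to exhibit an object $P\in{}^{\hkp}\M_{\kkpsiK}$ for which $\End_{\kkpsiK}(P)\cong\kk$ as $\hkp$-comodule algebras. Since $\kkpsiK\cong M_2(\kk)$ is simple, the only possible $P$ (up to isomorphism of $\kkpsiK$-modules) is the $2$-dimensional simple right $\kkpsiK$-module, and then $\End_{\kkpsiK}(P)=\kk$ is forced by Schur's lemma; moreover the $1$-dimensional algebra $\kk$ admits only the trivial $\hkp$-comodule algebra structure, so the comodule-algebra identification requires no extra work. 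Hence the entire content of the argument is to put an $\hkp$-comodule structure on the simple $\kkpsiK$-module that is compatible with the action.

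Concretely, I would let $P=\kk v\oplus\kk w$ carry the coaction of the $2$-dimensional simple $\hkp$-comodule $X$ recorded above, and define the right $\kkpsiK$-action on this basis by
\begin{align*}
v\cdot e_x&=v, & w\cdot e_x&=-w, & v\cdot e_y&=w, & w\cdot e_y&=v.
\end{align*}
A quick check shows these assignments satisfy $e_x^2=e_y^2=1$ and $e_xe_y=-e_ye_x$, so this is a well-defined right $\kkpsiK$-module, and the induced map $\kkpsiK\to\End_\kk(P)^{\mathrm{op}}$ is an isomorphism, so $P$ is the faithful simple module and $\End_{\kkpsiK}(P)=\kk$.

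The one genuine step is to verify that $\lambda_P\colon P\to\hkp\tensor P$ is a morphism of right $\kkpsiK$-modules, where $\hkp\tensor P$ carries the action $(h\tensor p)\cdot e_g=hg\tensor(p\cdot e_g)$. As $\kkpsiK$ is generated by $e_x$ and $e_y$ and this condition is multiplicative in $\kkpsiK$, it is enough to verify it on $e_x$ and $e_y$; using the relations $zx=yz$ and $zy=xz$ one sees that right multiplication by $x$ fixes $(1+y)z$ and $x(1+y)z$ and negates $(1-y)z$ and $x(1-y)z$, while right multiplication by $y$ interchanges $(1\pm y)z$ with $x(1\pm y)z$, and substituting this into $\lambda_P(v),\lambda_P(w)$ and comparing with the prescribed action is then a one-line computation in each case. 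I do not anticipate a real obstacle here: the only mild difficulty is pinning down the correct action, and the $M_2(\kk)$-picture of $\kkpsiK$ singles out the formulas above. With the compatibility in hand, Theorem~\ref{End P Thm} yields $\kk\simeq_{\hkp}\kkpsiK$, completing the proof. (One could also note why the equivalence must be realized in this direction: a $P\in{}^{\hkp}\M_\kk$ with $\End_\kk(P)\cong\kkpsiK$ would force $\dim P=2$, and then $\End_\kk(P)$ has $\hkp$-comodule type $X\oplus X$ or $\kk_g^{\oplus2}\oplus\kk_h^{\oplus2}$, neither of which matches the comodule type of $\kkpsiK$.)
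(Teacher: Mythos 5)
Your proposal is correct and matches the paper's proof essentially step for step: the paper uses the same two-dimensional comodule $V$ with the identical right $\kkpsiK$-action on $v,w$, verifies the coaction is a morphism of right $\kkpsiK$-modules, and concludes $\End_{\kkpsiK}(V)\cong\kk$ (the paper via cyclicity and eigenspace preservation, you via Schur's lemma for $M_2(\kk)$ --- an immaterial difference). No gaps.
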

\begin{proof}
    Consider the two dimensional $\hkp$ comodule $V$ given above with basis $v,w$. Define a right $\kkpsiK$-module structure on $V$ by:
    \begin{align*}
        ve_x&=v&we_x&=-w\\
        ve_y&=w&we_y&=v.
    \end{align*}
    It is straightforward to show this make $V$ a right $\kkpsiK$-module. We notice with this structure we have the following computations:
    \begin{align*}
        ((yz+z)\tensor v)(x\tensor e_x)&=(yz+z)x\tensor ve_x=(yz+z)\tensor v\\
        ((yz-z)\tensor w)(x\tensor e_x)&=(z-yz)\tensor we_x=(yz-z)\tensor w\\
        ((yz+z)\tensor v)(y\tensor e_y)&=(yz+z)y\tensor ve_y=x(yz+z)\tensor w\\
        ((yz-z)\tensor w)(y\tensor e_y)&=(yz-z)y\tensor we_y=x(yz-z)\tensor v
    \end{align*}
    and
    \begin{align*}
        (x(yz-z)\tensor v)(x\tensor e_x)&=x(z-yz)\tensor ve_x=x(z-yz)\tensor v\\
        (x(yz+z)\tensor w)(x\tensor e_x)&=x(yz+z)x\tensor we_x=x(z+yz)\tensor -w\\
        (x(yz-z)\tensor v)(y\tensor e_y)&=x(yz-z)y\tensor ve_y=(yz-z)\tensor w\\
        (x(yz+z)\tensor w)(y\tensor e_y)&=x(yz+z)y\tensor we_y=(yz+z)\tensor v
    \end{align*}
    The above computations can be used to show $\lambda:V\to \hkp\tensor V$ is a morphism of right $\kkpsiK$-modules.
    Thus $V\in {}^{\hkp}\M_{\kkpsiK}$ and $\End_{\kkpsiK}(V)$ is a $\hkp$-equivariant Morita equivalent $\hkp$-comodule algebra. Notice any $f\in \End_{\kkpsiK}$ must preserve the $e_x$ eigenspaces and since $V$ is cyclic as a right $\kkpsiK$ module, $f$ is entirely determined by the image of $v$. Therefore, $f(v)=\alpha v$ for some $\alpha\in\kk$ which implies $\End_{\kkpsiK}(V)\cong \kk$ and the result holds.
\end{proof}
Notice this implies every AM-exact $H$-comodule algebra is $\hkp$-equivariant Morita equivalent to a coideal subalgebra of $\hkp$. We now investigate the $\hkp$-equivariant Morita equivalence of coideal subalgebras of $\hkp$.

\begin{proposition}\label{X/Y Morita Prop}
    There is a $\hkp$-equivariant Morita equivalence $\GA{x}\simeq_\hkp \GA{y}$.
\end{proposition}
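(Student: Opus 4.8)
The plan is to invoke Theorem \ref{End P Thm}: it suffices to exhibit $P\in{}^{\hkp}\M_{\GA{y}}$ together with an isomorphism of $\hkp$-comodule algebras $\GA{x}\cong\End_{\GA{y}}(P)$. I would take $P$ to be the two-dimensional subspace $P:=\operatorname{span}_\kk\{z,xz\}\subseteq\hkp$, equipped with coaction $\lambda_P=\Delta|_P$ and right $\GA{y}$-action by right multiplication inside $\hkp$. Using $\Delta(z)=\half\big((1+y)z\tensor z+(1-y)z\tensor xz\big)$ and $\Delta(xz)=(x\tensor x)\Delta(z)$ one checks $\Delta(P)\subseteq\hkp\tensor P$, so $P$ is an $\hkp$-subcomodule. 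The relations $zy=xz$ and $x^2=1$ give $z\GA{y}=\operatorname{span}\{z,zy\}=P$, so $P$ is free of rank one as a right $\GA{y}$-module; and since $\Delta$ is multiplicative, $\lambda_P$ is automatically a morphism of right $\GA{y}$-modules, i.e.\ $P\in{}^{\hkp}\M_{\GA{y}}$.

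Next I would produce the algebra isomorphism. The same relations show $\GA{x}\cdot P=P$, and left multiplication by $\GA{x}$ commutes with the right $\GA{y}$-action by associativity in $\hkp$, so $a\mapsto L_a$ (left multiplication by $a$) defines an algebra homomorphism $\GA{x}\to\End_{\GA{y}}(P)$. Any right $\GA{y}$-endomorphism of $P$ is determined by the image of the generator $z$, and since $P$ is free of rank one this image may be prescribed arbitrarily, so $\dim\End_{\GA{y}}(P)=\dim P=2$; as $L_1(z)=z$ and $L_x(z)=xz$ are linearly independent, $L_\bullet$ is an isomorphism of algebras, with $L_x^2=L_{x^2}=\id$, matching $\GA{x}=\kk[\Z/2\Z]$.

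The crux is to check this is an isomorphism of $\hkp$-\emph{comodule} algebras, i.e.\ that the coaction on $\End_{\GA{y}}(P)$ furnished by \eqref{EQU End Coact} sends $\id\mapsto 1\tensor\id$ and $L_x\mapsto x\tensor L_x$, matching $\lambda_{\GA{x}}(x)=x\tensor x$ on the coideal subalgebra. For $T=L_a$ with $a$ grouplike, grouplikeness gives $\lambda_P(a\,p_{(0)})=(a\tensor a)\Delta(p_{(0)})$, so the right-hand side of \eqref{EQU End Coact} becomes $\alpha\big(a\,p_{(2)}S^{-1}(p_{(1)})\big)\,a\,p_{(3)}$, where $p_{(1)}\tensor p_{(2)}\tensor p_{(3)}=(\Delta\tensor\id)\Delta(p)$. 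The antipode identity $\sum p_{(2)}S^{-1}(p_{(1)})\tensor p_{(3)}=1\tensor p$ (obtained by applying $\sum h_{(2)}S^{-1}(h_{(1)})=\epsilon(h)1$ to $h=p_{(1)}$) collapses this to $\alpha(a)\,ap=\alpha(a)\,L_a(p)$, so $\lambda_{\End}(L_a)=a\tensor L_a$; taking $a=x$ and $a=1$ completes the identification, and Theorem \ref{End P Thm} then yields $\GA{x}\simeq_{\hkp}\GA{y}$. I expect this last computation to be the only genuine obstacle; the rest is bookkeeping with the presentation of $\hkp$. (It is worth noting that $\GA{x}$ and $\GA{y}$ are isomorphic as plain algebras but not as $\hkp$-comodule algebras, so this is a genuinely equivariant Morita equivalence rather than an isomorphism, consistent with the promised fact that $\GA{x}\not\simeq_{\kK}\GA{y}$.)
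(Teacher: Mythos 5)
Your proof is correct, and it realizes the equivalence through Theorem \ref{End P Thm} just as the paper does, but with the roles of the two algebras reversed and a different concrete model for the bimodule. The paper takes the abstract two-dimensional comodule $V$ already equipped with the right $\kkpsiK$-action from Proposition \ref{Twist Morita Prop}, restricts that action along $x\mapsto e_x$ to get $V\in{}^{\hkp}\M_{\GA{x}}$, and then shows $\End_{\GA{x}}(V)\cong\GA{y}$ by locating the nonidentity endomorphism $F$ (namely $F(v)=v$, $F(w)=-w$) and solving for its comodule degree $g$ from the compatibility $\lambda(F(p))=\lambda(F)\lambda(p)$. You instead realize the same underlying comodule concretely as the left coideal $\operatorname{span}\{z,xz\}\subset\hkp$, let $\GA{y}$ act by right multiplication (using $zy=xz$), identify $\End_{\GA{y}}(P)$ with left multiplication by $\GA{x}$, and compute the coaction on endomorphisms directly from \eqref{EQU End Coact}; your observation that for grouplike $a$ the formula collapses via $p_{(2)}S^{-1}(p_{(1)})\tensor p_{(3)}=1\tensor p$ to give $\lambda(L_a)=a\tensor L_a$ is a clean, reusable computation that replaces the paper's case analysis on $F$. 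The trade-off: your argument is self-contained and makes the equivalence visibly ``internal'' to $\hkp$ (both algebras and the bimodule live inside $\hkp$), whereas the paper's route recycles the module already built for the equivalence $\kk\simeq_{\hkp}\kkpsiK$ and so requires less new setup in context. Your closing parenthetical is consistent with Remark \ref{Morita Eq as K rem}: the equivalence genuinely uses the two-dimensional comodule, which does not come from $\kK$.
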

\begin{proof}
    These algebras are isomorphic so they clearly generate the same abelian category. Now we notice $\GA{x}$ is isomorphic to an $H$-costable subalgebra of $\kkpsiK$ by the map $x\mapsto e_x$. Thus the $\kkpsiK$-module $V$ in the proof of Proposition \ref{Twist Morita Prop} is an object in ${}^H\M_{\GA{x}}$. We notice $V$ is the direct sum of the two non-isomorphic simple representations of $\GA{x}$. This implies $\dim(\End_{\GA{x}}(V))=2$. This implies $\End_{\GA{x}}(V)$ is isomorphic (as a comodule) to $\kk_1\oplus \kk_g$ for some grouplike element $g$. In particular, there exists an element $F$ such that $F^2=1$ and $\lambda(F)=g\tensor F$. Notice such a map must preserve the subspaces generated by $x$ and $y$. Thus $F(v)=\alpha v$ and $F(w)=\beta v$ for some $\alpha,\beta\in\set{\pm 1}$. The case when $\alpha=\beta=1$ is the identity and $\alpha=\beta=-1$ is negative of the identity. Thus we consider the map $F(v)=v, F(w)=-w$. Since $V\in {}^\hkp_{\End_{\GA{x}}(V)}\M_{\GA{x}}$:
    \begin{equation*}
        \lambda(F(p))=\lambda(F)\lambda(p)
    \end{equation*}
    for all $p\in V$. In particular, when $p=v$ we have:
    \begin{align*}
        \half \Big[(1+y)z\tensor v+(1-y)z\tensor w\Big]&=(g\tensor F)\half \Big[(1+y)z\tensor v+(1-y)z\tensor w\Big]\\
        &=\half \Big[g(1+y)z\tensor F(v)+g(1-y)z\tensor F(w)\Big]\\
        &=\half \Big[g(1+y)z\tensor v+g(1-y)z\tensor -w\Big]
    \end{align*}
    which we notice only yields equality if and only if $g=y$. Thus $\lambda(F)=y\tensor F$ and $\End_{\GA{x}}(V)\cong \GA{y}$.
\end{proof}
An interesting note to make is that $\GA{x}\not\simeq_{\kK}\GA{y}$. 
\begin{remark}\label{Morita Eq as K rem}
    Although $\GA{x}$ and $\GA{y}$ are $\hkp$-equivariant Morita equivalent, but $\GA{x}\not\simeq_{\kK}\GA{y}$.
\end{remark}
\begin{proof}
    Suppose there exists $P\in {}^{\kK}\M_{\GA{y}}$ such that $\End_{\GA{y}}(P)\cong \GA{x}$ as $H$-comodule algebras. Notice by Theorem \ref{4.2iii_SKR}, any $P={}^{\kK}\M_{\GA{y}}$ is free as a $\GA{y}$-module. Thus $\End_{\GA{y}}(P)\cong \GA{x}$ (as $\kk$-algebras) if and only if $P$ is two dimensional. In this case, $P$ is also the direct sum of two, one dimensional $\kK$-comodules. That is, there exists an element $b_g$ such that $\lambda(b_g)=g\tensor b_g$ and $\lambda (b_gy)=gy\tensor b_gy$. Notice $\End_{\GA{y}}(P)$ is isomorphic as a comodule to the direct sum of two simple subcomodules. The first is spanned by the identity map. The second is spanned by a morphism satisfying $T^2=1$. Thus $T$ (or $-T$) must fix $(b_g+b_gy)$ and negate $(b_g-b_gy)$.  Notice 
    \begin{align*}
        T(b_g)=T(\half (b_g+b_gy)+\half(b_g-b_gy))=\half (b_g+b_gy)-\half(b_g-b_gy))=b_gy
    \end{align*}
    Since $P\in {}^{\kK}\M_{\GA{y}}$, 
    \begin{equation*}
        gy\tensor b_gy=\lambda(T(b_g))=(h\tensor T)(g\tensor b_g)=hg\tensor b_gy
    \end{equation*}
    Thus $gy=hg$ and so $h=y$. Thus $\End_{\GA{y}}(P)\cong \GA{y}$. That is, there is no object $P\in {}^{\kK}\M_{\GA{y}}$ such that $\End_{\GA{y}}(P)\cong \GA{x}$ as $\kK$-comodule algebras.
\end{proof}

Remark \ref{Morita Eq as K rem} shows that the two dimensional simple comodule appearing in ${}^\hkp\M$ that is not induced from an object ${}^\kK\M$ is necessary to realize the equivalence $\M_{\GA{x}}\simeq_{\hkp}\M_{\GA{y}}$.

\begin{proposition}\label{X/XY Noneq Prop}
    The algebras generated by $x$ and $xy$ are Morita equivalent as $\kk$-algebras but are not $\hkp$-equivariant Morita equivalent.
\end{proposition}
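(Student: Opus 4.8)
The plan is to handle the two assertions separately. The Morita equivalence of $\GA{x}$ and $\GA{xy}$ as $\kk$-algebras is immediate: each is generated by a single grouplike element of order two, so $\GA{x}\cong\GA{\Z/2\Z}\cong\kk\times\kk\cong\GA{xy}$ as $\kk$-algebras, and isomorphic algebras are Morita equivalent.

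For the failure of $\hkp$-equivariant Morita equivalence I would argue by contradiction using Theorem \ref{End P Thm}. Suppose $\GA{x}\simeq_\hkp\GA{xy}$; then there is $P\in{}^\hkp\M_{\GA{xy}}$ with $\GA{x}\cong\End_{\GA{xy}}(P)$ as $\hkp$-comodule algebras. Since $\GA{xy}\subset\hkp$ is a coideal subalgebra, Theorem \ref{4.2iii_SKR} shows $P$ is free over $\GA{xy}$, say of rank $n$, whence $\End_{\GA{xy}}(P)\cong M_n(\GA{xy})\cong M_n(\kk)\times M_n(\kk)$ as $\kk$-algebras. As a comodule algebra isomorphism is in particular a $\kk$-algebra isomorphism, comparison with $\GA{x}\cong\kk\times\kk$ forces $n=1$. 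Thus $\End_{\GA{xy}}(P)\cong\GA{xy}$ as $\kk$-algebras, with $\kk$-basis $\{\id,T\}$, where $T$ is right multiplication by the grouplike element $xy$ and $T^2=\id$.

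The key step is to identify the $\hkp$-coaction on $T$. I would first note that $xy$ is central in $\hkp$: it is grouplike, and $z(xy)=(xy)z$ follows from the relations $yz=zx$ and $xz=zy$. Then, writing $\lambda_P(p)=\sum_i h_i\tensor p_i$ for $p\in P$ and using that $\lambda_P$ is a morphism of right $\GA{xy}$-modules together with $\lambda(xy)=xy\tensor xy$, one obtains
\begin{equation*}
\lambda_P(T(p))=\lambda_P(p)(xy\tensor xy)=\sum_i xy\,h_i\tensor T(p_i)=(xy\tensor T)\lambda_P(p),
\end{equation*}
the middle equality using the centrality of $xy$. By the uniqueness of the comodule structure in Theorem \ref{End P Thm} (equivalently, \eqref{EQU End Coact}), this forces the coaction on $T$ to be $xy\tensor T$, so $\End_{\GA{xy}}(P)=\kk\id\oplus\kk T$ with $T^2=\id$ and $T$ homogeneous of degree $xy$ — that is, $\End_{\GA{xy}}(P)\cong\GA{xy}$ as $\hkp$-comodule algebras. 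Finally, $\GA{xy}\not\cong\GA{x}$ as $\hkp$-comodule algebras, because as comodules $\GA{xy}\cong\kk_1\oplus\kk_{xy}$ while $\GA{x}\cong\kk_1\oplus\kk_x$ and $\kk_{xy}\not\cong\kk_x$; this contradiction completes the argument.

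The only substantive point is the coaction computation, and the observation that $xy$ is central in $\hkp$ makes it short. Without it, one would split into the cases $P\cong X$ and $P\cong\kk_g\oplus\kk_h$ — the only two-dimensional $\hkp$-comodules — and compute the coaction on $T$ directly, using that $z$ is invertible in $\hkp$ (because $z^2=\tfrac12(1+x+y-xy)$ is invertible in $\kK$); this is routine but longer. The rank-one reduction is the other place to be careful, but it uses only that an $\hkp$-comodule algebra isomorphism is in particular an isomorphism of $\kk$-algebras.
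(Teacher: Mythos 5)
Your proof is correct, but it takes a genuinely different route from the paper. The paper argues categorically: it uses the action of the invertible objects $\kk_\zeta\in{}_{\hkp}\M$ on the simple objects of the two module categories as a Morita invariant, observing that for $\zeta$ a primitive fourth root of unity $\kk_\zeta\tensor-$ permutes the two simples of ${}_{\GA{x}}\M$ (since $\eta_\zeta(x)=\zeta^2=-1$) but fixes both simples of ${}_{\GA{xy}}\M$ (since $\eta_\zeta(xy)=\zeta^4=1$), so no $\Rep(\hkp)$-module equivalence can exist. You instead work entirely on the comodule-algebra side: Theorem \ref{End P Thm} plus freeness over the coideal subalgebra $\GA{xy}$ reduces to a rank-one $P$, and the centrality of $xy$ in $\hkp$ (which does follow from $yz=zx$, $xz=zy$) pins the coaction on the nontrivial endomorphism $T$ to $xy\tensor T$, so every candidate $\End_{\GA{xy}}(P)$ is isomorphic to $\GA{xy}$ itself as a comodule algebra and can never be $\GA{x}$. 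Your computation of the coaction is sound — plugging $\lambda_P\circ T=(xy\tensor T)\circ\lambda_P$ into \eqref{EQU End Coact} and using $p_{(-1)}S^{-1}(p_{(-2)})=\epsilon(p_{(-1)})1$ gives exactly $T_{(-1)}\tensor T_{(0)}=xy\tensor T$ — and your argument is in the same spirit as the paper's own Remark \ref{Morita Eq as K rem}. What each approach buys: the paper's is shorter and isolates the invariant (the action of the group of invertible objects on simples) that simultaneously explains why $\GA{x}\simeq_\hkp\GA{y}$ succeeds while $\GA{x}\simeq_\hkp\GA{xy}$ fails; yours proves the slightly stronger statement that the $\hkp$-equivariant Morita class of $\GA{xy}$ contains no two-dimensional comodule algebra besides $\GA{xy}$ itself, and it also supplies the $\kk$-algebra Morita equivalence explicitly, which the paper's proof leaves implicit.
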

\begin{proof}
    Consider $\GA{x},\GA{xy}$ denote these algebras and let $X_\alpha$ for $\alpha=\pm 1$ be the simple representations of $\GA{x}$ and let $U_\alpha$ be the simple representations of $\GA{xy}$. By means of contradiction, suppose $F:{}_{\GA{x}}\M \to {}_{\GA{xy}}\M$ defines a $\Rep(\hkp)$-module category equivalence. Then for all $\zeta$, $F(\kk_\zeta\tensor X_\alpha)$ must be isomorphic to $\kk_\zeta \tensor F(X_\alpha)$ for all $\zeta^4=1$. Notice though that $\kk_\zeta\tensor X_\alpha\cong X_{\zeta^2\alpha}$ and $\kk_\zeta\tensor U_\alpha\cong U_\alpha$ for all $\zeta,\alpha$. Thus the action of $\kk_\zeta$ permutes the simple objects of ${}_{\GA{x}}\M$ when $\zeta^2=-1$ but the action of $\kk_\zeta$ fixes the isomorphism class of simple objects in ${}_{\GA{xy}}\M$ for all $\zeta$. Thus no such functor can exist.
\end{proof}
A similar argument can show that $\kK\not\simeq_\hkp A_{xy}^\gamma$ despite $\kK\cong A_{xy}^\gamma$ as $\kk$-algebras. The isomorphism follows from the fact that these are both four dimensional commutative semisimple (by exactness) $\kk$-algebras.
\begin{proposition}
    The algebras $\kk[K],A^\gamma_{xy}$ are not $\hkp$-equivariant Morita equivalent.
\end{proposition}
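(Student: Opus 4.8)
The plan is to distinguish the ${}_\hkp\M$-module categories ${}_{\kk[K]}\M$ and ${}_{A_{xy}^\gamma}\M$ by the permutation that an invertible object of ${}_\hkp\M$ induces on the set of isomorphism classes of simple objects, exactly as in the proof of Proposition~\ref{X/XY Noneq Prop}. Both $\kk[K]$ and $A_{xy}^\gamma$ are four-dimensional commutative semisimple $\kk$-algebras (the latter by exactness, since ${}_\hkp\M$ is fusion), hence each is isomorphic to $\kk^4$, so ${}_{\kk[K]}\M$ and ${}_{A_{xy}^\gamma}\M$ each have exactly four simple objects, all one-dimensional, in bijection with the primitive idempotents of the corresponding algebra. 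For a one-dimensional $\hkp$-module $\kk_\zeta$ with character $\eta_\zeta$, the functor $\kk_\zeta\otimes(-)$ on ${}_A\M$ sends a module to its twist along the algebra automorphism $\sigma_\zeta:=(\eta_\zeta\otimes\id_A)\circ\lambda_A\colon A\to A$, and so acts on primitive idempotents by the permutation $\sigma_\zeta$ (or $\sigma_\zeta^{-1}$; only the cycle type will matter). Any equivalence $F\colon{}_{\kk[K]}\M\to{}_{A_{xy}^\gamma}\M$ of ${}_\hkp\M$-module categories restricts to a bijection on simples that conjugates the permutation induced by $\kk_i\otimes(-)$ on one side to that on the other, where $i$ is a primitive fourth root of unity, so $\eta_i(z)=i$ and $\eta_i(x)=\eta_i(y)=i^2=-1$. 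It therefore suffices to check that these two permutations have different cycle types.

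For $\kk[K]$ the coaction is $\lambda(e_g)=g\otimes e_g$, so $\sigma_i(e_g)=\eta_i(g)e_g$, giving $\sigma_i(e_x)=-e_x$ and $\sigma_i(e_y)=-e_y$. On the four primitive idempotents $\tfrac14(1\pm e_x)(1\pm e_y)$ this is the involution $(\pm,\pm)\mapsto(\mp,\mp)$, which has no fixed point and is thus a product of two disjoint transpositions.

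For $A_{xy}^\gamma$, write its basis as $\{1,e_{xy},v,w\}$, with $e_{xy}$ central and $e_{xy}v=ve_{xy}=\gamma w$, $we_{xy}=-\gamma v$, $v^2=1-\gamma e_{xy}$, $\gamma^2=-1$. From the coaction one reads off $\sigma_i(e_{xy})=\eta_i(xy)e_{xy}=e_{xy}$ (since $\eta_i(xy)=1$), $\sigma_i(v)=iw$, and $\sigma_i(w)=-iv$. As $\sigma_i$ fixes $e_{xy}$, it preserves the block decomposition $A_{xy}^\gamma=f_+A_{xy}^\gamma\oplus f_-A_{xy}^\gamma$ with $f_\pm=\tfrac12(1\pm e_{xy})$, each summand a two-dimensional commutative semisimple algebra with two primitive idempotents. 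In $f_+A_{xy}^\gamma$ the element $a:=vf_+=\tfrac12(v+\gamma w)$ satisfies $a^2=(1-\gamma)f_+\neq 0$, so $\{f_+,a\}$ is a basis of that summand; using $wf_+=\gamma^{-1}a$ one computes $\sigma_i(a)=(iw)f_+=i\gamma^{-1}a$, and likewise $b:=vf_-=\tfrac12(v-\gamma w)$ gives $\sigma_i(b)=i\gamma\,b$. Since $(i\gamma^{-1})^2=(i\gamma)^2=1$ while $(i\gamma^{-1})(i\gamma)=-1$, exactly one of these scalars equals $1$ and the other $-1$; hence $\sigma_i$ fixes both primitive idempotents of one block and interchanges the two of the other, so it is a transposition with two fixed points. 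Since a product of two disjoint transpositions is not conjugate to a single transposition, no equivalence $F$ can exist, and therefore $\kk[K]\not\simeq_\hkp A_{xy}^\gamma$.

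The only step that is not formal is this last computation of $\sigma_i$ on the primitive idempotents of $A_{xy}^\gamma$: it requires unwinding the block decomposition and carefully using the structure constants $e_{xy}v=\gamma w$ and $v^2=1-\gamma e_{xy}$ together with $\gamma^2=-1$. I would also remark at the outset that the choice of primitive fourth root $\gamma$ is immaterial here, consistent with $A_{xy}^\gamma\cong A_{xy}^{-\gamma}$ (Proposition~\ref{gamma coideal Prop}): replacing $\gamma$ by $-\gamma$ merely swaps the roles of the blocks $f_+A_{xy}^\gamma$ and $f_-A_{xy}^\gamma$.
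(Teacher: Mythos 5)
Your proof is correct, and it rests on the same invariant as the paper's argument --- the permutation that tensoring with an invertible object of ${}_{\hkp}\M$ induces on isomorphism classes of simple modules --- but you instantiate it with a different witness, which changes the computation substantially. The paper tensors with $\kk_{-1}$ (so $z\mapsto-1$ and $x,y\mapsto 1$): this fixes every simple of $\kk[K]$, whose characters only see $x$ and $y$, while on $A^\gamma_{xy}$ it sends $\kk_{c,\sigma_c}$ to $\kk_{c,-\sigma_c}$ and hence moves every simple, so ``trivial versus fixed-point-free'' settles the matter with essentially no computation. You tensor with $\kk_i$ ($x,y\mapsto-1$), which acts nontrivially on both sides, so you must separate a product of two disjoint transpositions from a single transposition, and on the $A^\gamma_{xy}$ side this forces the block decomposition along $f_\pm=\frac{1}{2}(1\pm e_{xy})$ and the eigenvalue computation on $a=vf_+$ and $b=vf_-$. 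I checked that computation and it is right: $wf_+=\gamma^{-1}a$ because $\gamma^{-1}=-\gamma$, and exactly one of $i\gamma^{\pm1}$ equals $1$ since each squares to $1$ while their product is $-1$; so the cycle types $(2,2)$ and $(2,1,1)$ genuinely differ and the conclusion follows. The only cost of your route is length --- swapping your witness $\kk_i$ for $\kk_{-1}$ collapses the entire $A^\gamma_{xy}$ half of the argument to one line.
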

\begin{proof}
    Let $\kk_{a,b}\in \Rep(\kK)$ be the simple representations of $\kK$ where $x$ acts by $a$ and $y$ acts by $b$ for $(a,b)\in\set{(\pm 1,\pm 1)}$. Let $\kk_{c,\sigma_c}$ be the simple representation of $A^\gamma_{xy}$ with $e_{xy}$ acting by $c=\pm 1$ and $v$ acting by $\sigma_c$, a square root of $1-cq$.
    \\[5px]
    We can see that $\kk_{-1}\tensor \kk_{a,b}\cong \kk_{a,b}$ for all $a,b$ but $\kk_{-1}\tensor \kk_{c,\sigma_c}\cong \kk_{c,-\sigma_c}$ for all $c,\sigma_c$. Thus $\kK\not\simeq_{\hkp}A^\gamma_{xy}$.
\end{proof}

\begin{theorem}\label{Morita Deg Z Thm}
    If $\mathcal{M}$ is an exact indecomposable module category over $\Rep(\hkp)$ then $\mathcal{M}$ is equivalent to ${}_A\mathcal{M}$ for some left coideal subalgebra $A\subset \hkp$.
\end{theorem}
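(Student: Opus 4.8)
The plan is to assemble the classification results established earlier in the paper. First I would invoke Theorem \ref{exact Thm}(2): since $\mathcal{M}$ is an exact indecomposable module category over $\Rep(\hkp) = {}_{\hkp}\M$, there is an AM-exact left $\hkp$-comodule algebra $A$ with $\mathcal{M} \simeq {}_A\mathcal{M}$ as $\Rep(\hkp)$-module categories. Thus the statement reduces to showing that every AM-exact $\hkp$-comodule algebra is $\hkp$-equivariant Morita equivalent to a left coideal subalgebra of $\hkp$.

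Next I would apply Theorem \ref{hkp class thm}, which says that every AM-exact $\hkp$-comodule algebra is isomorphic, as an $\hkp$-comodule algebra, either to a left coideal subalgebra of $\hkp$ or to the twisted group algebra $\kkpsiK$. In the first case we are done immediately: an isomorphism $A \cong B$ of $\hkp$-comodule algebras with $B \subset \hkp$ a coideal subalgebra gives an equivalence ${}_A\mathcal{M} \simeq {}_B\mathcal{M}$ of $\Rep(\hkp)$-module categories, hence $\mathcal{M} \simeq {}_B\mathcal{M}$. For the remaining case $A \cong \kkpsiK$, I would invoke Proposition \ref{Twist Morita Prop}, which produces an $\hkp$-equivariant Morita equivalence $\kk \simeq_{\hkp} \kkpsiK$; since $\kk$ (the trivial coideal subalgebra $\kk\cdot 1_{\hkp}$) is a left coideal subalgebra of $\hkp$, this settles the last case and exhausts all possibilities.

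In truth this proof carries essentially no obstacle beyond the results already in hand; the real content is packaged into Theorem \ref{hkp class thm} (the full case analysis on the isomorphism type of $A_K$, which occupies Sections 5.1--5.4) and Proposition \ref{Twist Morita Prop} (the explicit bimodule $V$ realizing $\kk \simeq_{\hkp} \kkpsiK$). The one point worth emphasizing is conceptual rather than technical: the unique AM-exact $\hkp$-comodule algebra that is \emph{not} itself a coideal subalgebra, namely $\kkpsiK \cong M_2(\kk)$, is nevertheless $\hkp$-equivariant Morita equivalent to the trivial coideal subalgebra $\kk$, so the list of coideal subalgebras of $\hkp$ already sees every exact indecomposable $\Rep(\hkp)$-module category.
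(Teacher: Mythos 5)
Your proposal is correct and follows essentially the same route as the paper: reduce via Theorem \ref{exact Thm}(2) to AM-exact comodule algebras, apply the classification in Theorem \ref{hkp class thm}, and dispose of the one non-coideal case $\kkpsiK$ using the equivalence $\kk\simeq_{\hkp}\kkpsiK$ from Proposition \ref{Twist Morita Prop}. If anything, your argument is slightly leaner than the paper's, which also cites Propositions \ref{X/Y Morita Prop} and \ref{X/XY Noneq Prop}; those (in)equivalences matter for distinguishing the Morita classes of coideal subalgebras but are not needed for the bare existence statement proved here.
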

\begin{proof}
    The classification of isomorphism class of AM-exact $\hkp$-comodule algebras follows from Theorem \ref{hkp class thm}. Then, Proposition \ref{Twist Morita Prop}, Proposition \ref{X/Y Morita Prop}, and Proposition \ref{X/XY Noneq Prop} prove several (in)equivalences. We notice all other pairs unmentioned in these propositions are between $\kk$-algebras that are not (classically) Morita equivalent.
\end{proof}

The classification of ${}_\hkp\M$-module categories was computed in \cite{EKW} via $\hkp$-module algebras. In particular, Theorem \ref{Morita Deg Z Thm} is analogous to Theorem 5.23 in \cite{EKW}. The benefit of this approach is that we can extend these results to the non-semisimple coradically-graded Hopf algebras with coradical $\hkp$ (see \cite{SHI}). In particular, $\hkp$ satisfies the hypothesis of Corollary \ref{All Coideal cor} which implies the following result.
\begin{proposition}
    If $H$ is a finite-dimensional coradically graded Hopf algebra with $H_0=\hkp$, then every Loewy-graded AM-exact $H$-comodule algebra generates an ${}_H\M$-module category equivalent to ${}_A\M$ for some homogeneous coideal subalgebra $A\subset H$.
\end{proposition}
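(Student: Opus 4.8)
The plan is to deduce this proposition as an immediate consequence of Corollary~\ref{All Coideal cor}, whose only hypothesis is that every $H_0$-equivariant Morita equivalence class be represented by a left coideal subalgebra of $H_0$. So the substance of the argument is just to check this hypothesis for $H_0=\hkp$. First I would record that, $H$ being coradically graded with semisimple coradical $\hkp$, we have $H=R\#\hkp$ for a finite-dimensional graded braided Hopf algebra $R\in\yd{\hkp}$ with $R(0)=\kk$, so that the Loewy-series machinery of the preceding sections (in particular Theorem~\ref{Am Lift Thm}, Corollary~\ref{Morita Coideal Cor}, and Corollary~\ref{All Coideal cor}) applies with $H_0=\hkp$.

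Next I would verify the hypothesis of Corollary~\ref{All Coideal cor} for $\hkp$. This is exactly what the bulk of Section~5 accomplishes: Theorem~\ref{hkp class thm} shows that an AM-exact $\hkp$-comodule algebra is, up to isomorphism, either a left coideal subalgebra of $\hkp$ or the twisted group algebra $\kkpsiK$; and Proposition~\ref{Twist Morita Prop} gives $\kkpsiK\simeq_{\hkp}\kk$, so the single non-coideal isomorphism class is $\hkp$-equivariant Morita equivalent to the coideal subalgebra $\kk$. Equivalently, Theorem~\ref{Morita Deg Z Thm} states that every exact indecomposable $\Rep(\hkp)$-module category is of the form ${}_A\M$ for a left coideal subalgebra $A\subset\hkp$, which under the correspondence of Theorem~\ref{exact Thm} between such module categories and $\hkp$-equivariant Morita classes of AM-exact $\hkp$-comodule algebras is precisely the needed hypothesis.

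Finally I would assemble the pieces. Given a Loewy-graded AM-exact $H$-comodule algebra $B=\bigoplus_{n\geq 0}B(n)$, Theorem~\ref{Am Lift Thm} gives that $B(0)$ is an AM-exact $\hkp$-comodule algebra, and by the previous step $B(0)$ is $\hkp$-equivariant Morita equivalent to a left coideal subalgebra of $\hkp$. Corollary~\ref{Morita Coideal Cor} then produces a homogeneous left coideal subalgebra $A\subset H$ with $A\simeq_H B$; since $A\simeq_H B$ means exactly that ${}_A\M$ and ${}_B\M$ are equivalent as ${}_H\M$-module categories (and ${}_B\M$ is the module category generated by $B$), this is the assertion. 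I do not expect any genuine obstacle here: the proposition is a formal corollary of Theorem~\ref{Morita Deg Z Thm} together with Corollaries~\ref{Morita Coideal Cor} and~\ref{All Coideal cor}, and the only real mathematical content --- the case-by-case classification of AM-exact $\hkp$-comodule algebras and their Morita equivalences, i.e.\ Theorem~\ref{Morita Deg Z Thm} --- has already been carried out.
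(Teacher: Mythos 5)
Your proposal is correct and follows exactly the paper's route: the paper likewise observes that Theorem~\ref{Morita Deg Z Thm} (via Theorem~\ref{hkp class thm} and Proposition~\ref{Twist Morita Prop}) shows $\hkp$ satisfies the hypothesis of Corollary~\ref{All Coideal cor}, and then invokes that corollary. No gaps.
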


This work is supported in part by the Erwin and Peggy Kleinfeld Graduate Fellowship and also by the National Science Foundation, Award No. DMS-2303334.
\printbibliography
\end{document}